\tikzset{
    buffer/.style={
        draw,
        shape border rotate=90,
        isosceles triangle,
        isosceles triangle apex angle=60,
        fill=white,
        node distance=1.5cm,
        minimum height=3em
    }
}
\DeclareRobustCommand{\rvdots}{%
  \vbox{
    \baselineskip2\p@\lineskiplimit\z@
    \kern-\p@
    \hbox{.}\hbox{.}\hbox{.}
  }}
\title[To reorient is easier than to orient]{To reorient is easier than to orient:\\ an on-line algorithm for reorientation of graphs}
\author[M.~Fiori-Carones]{Marta Fiori-Carones}
\address{Dipartimento di scienze matematiche, informatiche e fisiche, Universit\`a di Udine, Via delle Scienze 208, 33100 Udine --- Italy}
\email{marta.fioricarones@uniud.it}
\author[A.~Marcone]{Alberto Marcone}
\address{Dipartimento di scienze matematiche, informatiche e fisiche, Universit\`a di Udine, Via delle Scienze 208, 33100 Udine --- Italy}
\email{alberto.marcone@uniud.it}
\thanks{We thank Nicola Gigante and Paul Shafer for useful discussions about the topic of the paper.\\
Both author's research was partially supported by the departmental PRID funding \lq\lq HiWei --- The higher levels of the Weihrauch hierarchy\rq\rq.}
\date{\today}
\newtheorem{theorem}{Theorem}[section]
\newtheorem*{maintheorem}{Main Theorem}
\newtheorem{corollary}[theorem]{Corollary}
\newtheorem{lemma}[theorem]{Lemma}
\newtheorem{property}[theorem]{Property}
\theoremstyle{definition}
\newtheorem{definition}[theorem]{Definition}
\newtheorem{notation}[theorem]{Notation}
\theoremstyle{remark}  
\newtheorem{obs}[theorem]{Observation}
\newtheorem{remark}[theorem]{Remark}
\newtheorem{ex}[theorem]{Example}
 \newtheoremstyle{fatto}
  {}
  {}
  {\itshape}
  {5mm}
  {\bfseries} 
  {.}
  {2mm}
  {}
\theoremstyle{fatto}
\newtheorem{claim}{Claim}[theorem]
\declaretheoremstyle[
  spaceabove=-3pt,%
  spacebelow=6pt,%
  headfont=\normalfont\itshape,%
  postheadspace=1em,%
  qed=\footnotesize{\(\square\)}]{mystyle}
\declaretheorem[name={Proof},style=mystyle,unnumbered]{prf}
\newcommand{\nat}{\mathbb{N}}
\newcommand{\rca}{\mathsf{RCA}_0}
\newcommand{\wkl}{\mathsf{WKL}_0}
\newcommand{\imp}{\rightarrow}
\newcommand{\E}{\,E\,}
\newcommand{\R}{\,R\,}
\newcommand{\Imp}{\Rightarrow}
\newcommand{\Biimp}{\Leftrightarrow}
\newcommand{\pt}[1]{\ensuremath{\mathrm{pt}(#1)}}
\newcommand{\ght}{GH-triple}
\newcommand{\lt}{lazy triple}
\DeclareMathOperator{\ran}{\mathrm{ran}}
\begin{document}

\begin{abstract}
We define an on-line (incremental) algorithm that, given a (possibly
infinite) pseudo-transitive oriented graph, produces a transitive
reorientation. This implies that a theorem of Ghouila-Houri is provable in
$\rca$ and hence is computably true.
\end{abstract}

\maketitle

\section{Introduction}
Let $(V,E)$ be an undirected graph, so that $E$ is a set of unordered pairs
of elements of $V$. We write $a \E b$ to mean that $\{a,b\} \in E$.

An asymmetric and irreflexive relation $\imp$ is an \emph{orientation} of
$(V,E)$ if for every $a,b \in V$ we have $a \E b$ if and only if $a \imp b$
or $b \imp a$. An orientation $\imp$ is \emph{transitive} if for every $a,b,c
\in V$ such that $a \imp b$ and $b \imp c$ we have also $a \imp c$. Graphs
having a transitive orientation are also known as \emph{comparability
graphs}: in fact $E$ is the comparability relation of the strict partial
order $\imp$.

A characterization of comparability graphs was given by Alain Ghouila-Houri
\cite{Ghouila62,Ghouila64} (using a different terminology and dealing only
with finite graphs) and reproved by Paul Gilmore and Alan Hoffman
\cite{gilmore1964}\footnote{\,Further results were obtained by Gallai
\cite{Gallai}.}.

\begin{theorem}\label{thm:comparability}
An undirected graph has a transitive orientation if and only if every cycle
of odd length has a triangular chord.
\end{theorem}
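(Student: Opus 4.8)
The plan is to prove the two implications separately; the left-to-right one is elementary, so I would dispatch it first. Suppose $\imp$ is a transitive orientation of $(V,E)$ and let $a_0 \E a_1 \E \dots \E a_{n-1} \E a_0$ be a cycle with $n$ odd and $n \ge 5$ (triangles, having no chords at all, being excluded from the statement by the usual convention). Reading indices modulo $n$, each edge $\{a_i,a_{i+1}\}$ is oriented in one of its two directions. If for some $i$ one has $a_{i-1} \imp a_i \imp a_{i+1}$, or symmetrically $a_{i+1} \imp a_i \imp a_{i-1}$, then transitivity yields an arc between $a_{i-1}$ and $a_{i+1}$, hence an edge $a_{i-1} \E a_{i+1}$; since $n \ge 5$ this is a genuine chord of the cycle, and it forms a triangle with $a_i$, so it is a triangular chord. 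Otherwise no two consecutive edges of the cycle carry the same orientation relative to the direction of traversal, so these orientations strictly alternate around the cycle --- impossible when $n$ is odd. Hence every odd cycle has a triangular chord.

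For the converse I would factor the argument through the notion of a \emph{pseudo-transitive} orientation: one in which $a \imp b$ and $b \imp c$ always entail $a \E c$ --- equivalently, one with no ``GH-triple'', that is, no directed path $a \imp b \imp c$ with $a,c$ non-adjacent. The first step is the classical half of the statement: assuming every odd cycle has a triangular chord, produce a pseudo-transitive orientation. An orientation has a GH-triple exactly when some vertex $b$ has non-adjacent neighbours $a,c$ with the edges $\{a,b\}$ and $\{b,c\}$ pointing oppositely at $b$ (one into $b$, one out of $b$), so a pseudo-transitive orientation is precisely a choice of a direction for each edge subject, for every such configuration, to the binary constraint that $\{a,b\}$ and $\{b,c\}$ point the same way at $b$. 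This is a ``$2$-colouring''-type system; it has a solution unless propagating the constraints around some closed chain of edges is contradictory, and tracing such a chain back into $(V,E)$ produces a closed walk with no triangular chord --- no edge joining the two walk-neighbours of any of its vertices --- which a parity count forces to have odd length. Pruning this down to a simple odd cycle while keeping it chordless in that sense is the one genuinely delicate point, analogous to Golumbic's ``triangle lemma'', and the odd cycle so obtained contradicts the hypothesis. Thus a pseudo-transitive orientation exists; and since transitive orientations are trivially pseudo-transitive, the odd-cycle condition is equivalent to the existence of a pseudo-transitive orientation (for infinite graphs this step too has to be arranged so as to avoid compactness).

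The second step, which is the substantive one and the reason the present paper exists, is to pass from a pseudo-transitive orientation to a transitive one by reorienting edges. Here I would construct an incremental procedure: process the vertices along some fixed enumeration and, whenever a step exposes an intransitivity $a \imp b \imp c$ --- necessarily with the edge $\{a,c\}$ already present, that being precisely what pseudo-transitivity guarantees --- repair it by flipping a coherent family of edges (the ``lazy triples'' introduced below), chosen so as to preserve pseudo-transitivity and not to undo earlier repairs. The classical finite proofs of this direction instead route through modular decomposition, which neither transfers transparently to infinite graphs nor is available in $\rca$, so I would steer clear of it. The crux --- and where I expect essentially the whole difficulty to lie --- is to show that such local repairs are always possible, that they can be scheduled so that the orientation improves monotonically, and that the limit of the process is a transitive orientation, all while inspecting only a bounded part of the graph at each stage; establishing this is precisely what the algorithm below is designed to accomplish.
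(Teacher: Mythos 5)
Your overall plan coincides with the route the paper itself takes (and, for this particular theorem, the paper only records rather than reproves): the forward direction by the alternation/parity argument, and the backward direction factored through pseudo-transitive orientations, first producing such an orientation from the odd-cycle condition and then invoking the reorientation step. Your forward argument is correct; just note that with the paper's definitions a ``cycle'' is a closed walk in which vertices may repeat and a triangle trivially has a triangular chord, so your exclusion of triangles and restriction to simple cycles of length at least $5$ is unnecessary --- the same alternation argument works verbatim for closed odd walks (the case $a_{i-1}=a_{i+1}$ cannot occur at a ``pass-through'' vertex by asymmetry).

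The backward direction, however, is not proved but only programmed, and two of its ingredients are problematic. First, the step from the odd-cycle condition to a pseudo-transitive orientation is left as a sketch whose acknowledged ``genuinely delicate point'' (pruning a contradictory forcing chain to a simple chordless odd cycle) you do not resolve; under the paper's walk-based formulation of the hypothesis that pruning is in fact unnecessary, but then you must say so and actually carry out the propagation and parity argument, which the proposal does not contain. Second, your parenthetical aim of arranging this step ``so as to avoid compactness'' for infinite graphs is exactly what Lemma \ref{compgraph} and Lemma \ref{incomp} rule out: over $\rca$ that step is equivalent to $\wkl$, and there is a computable graph satisfying the odd-cycle condition with no computable pseudo-transitive orientation, so no effective or incremental treatment of it exists; since Theorem \ref{thm:comparability} is a pure existence statement, the correct move is simply to prove the finite case and use compactness (or classical parity propagation), reserving effectiveness for the second step only. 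Third, that second step --- from a pseudo-transitive orientation to a transitive reorientation --- is precisely Theorem \ref{GhouilaHouri}, which you defer wholesale to ``the algorithm below''; as written, then, the proposal establishes neither half of the backward direction itself, and its description of the repair process (flipping edges when an intransitivity is exposed) does not yet amount to an argument.
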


Here a cycle is a sequence of vertices $a_0, \dots, a_k$ such that $a_k =
a_0$ and $a_i \E a_{i+1}$ for every $i<k$. (Notice that a vertex is allowed
to occur more than once in a cycle.) The cycle has odd length if $k$ is odd.
The cycle has a triangular chord if either $a_1 \E a_{k-1}$ or $a_i \E
a_{i+2}$ for some $i<k-1$.

In Figure \ref{figReor:cycles} the left graph has a cycle of length nine with
no triangular chord, while the right one has no cycles of odd length without
triangular chords.

\begin{figure}[h]
\begin{centering}
\begin{tikzpicture}[scale=.6]
\node[buffer] at (3,0.5) (a) {};   
\draw (0.7,-0.04) -- (2,-0.04);
\draw (4,-0.04) -- (5.3,-0.04);
\draw (3,1.7) -- (3,3);         

\foreach \x in {0.7,2,4,5.3}         
{\fill (\x,-0.04) circle (0.8mm);}
\foreach \x in {1.7,3}
{\fill (3,\x) circle (0.8mm);}

\node[buffer] at (15,0.5){};   
\draw (12.7,-0.04) -- (14,-0.04);
\draw (16,-0.04) -- (17.3,-0.04);

\foreach \x in {12.7,14,16,17.3}         
{\fill (\x,-0.04) circle (0.8mm);}
\fill (15,1.7) circle (0.8mm);
\end{tikzpicture}
\caption{\small A graph which is not a comparability graph, to the left, and
a comparability graph, to the right.}\label{figReor:cycles}
\end{centering}
\end{figure}
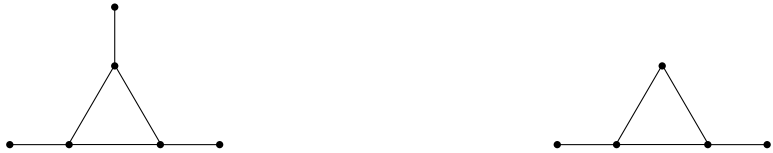

The forward direction of Theorem \ref{thm:comparability} is easily proved.
The backward direction was proved directly by Gilmore and Hoffman, while the
original proof by Ghouila-Houri uses an intermediate step. The latter
approach is also taken in several expositions of the theorem (\cite[Theorem
16.8]{Berge76}, \cite[Theorem 1.7]{fishburn}, \cite[Theorem
11.2.5]{Harzheim}) and hinges on the following notion.

An orientation $\imp$ is \emph{pseudo-transitive} if for every $a,b,c\in V$
such that $a \imp b$ and $b \imp c$ we have also either $a \imp c$ or $c \imp
a$.

Ghouila-Houri proves the backward direction of Theorem
\ref{thm:comparability} by first showing that if every cycle of odd length
has a triangular chord then there exists a pseudo-transitive orientation, and
then that any pseudo-transitive orientation can be further reoriented to
obtain a transitive one.

The effectiveness of Theorem \ref{thm:comparability} has already been
studied, in particular using the framework of reverse mathematics
(\cite{sosoa} is the basic reference in this area), by Jeff Hirst in his PhD
thesis \cite[Theorem 3.20]{hirstPhD}. Hirst indeed showed that a compactness
argument (disguised as an application of Zorn's lemma in \cite{gilmore1964}
and of Rado's theorem in \cite{fishburn,Harzheim}) is necessary for countable
graphs and hence the theorem is not computably true. The following lemma
includes Hirst's theorem and provides a direct proof for it.

\begin{lemma}\label{compgraph}
The following are equivalent over the base system $\rca$:
\begin{enumerate}
\item $\wkl$;
\item every countable graph such that every cycle of odd length has a
    triangular chord has a transitive orientation;
\item every countable graph such that every cycle of odd length has a
    triangular chord has a pseudo-transitive orientation.
\end{enumerate}
\end{lemma}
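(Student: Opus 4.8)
The plan is to establish the cycle of implications $(1)\Rightarrow(2)\Rightarrow(3)\Rightarrow(1)$. The implication $(2)\Rightarrow(3)$ is immediate, since every transitive orientation is in particular pseudo-transitive, so the real content is $(1)\Rightarrow(2)$, a compactness argument, and $(3)\Rightarrow(1)$, a reversal.

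For $(1)\Rightarrow(2)$ I would argue by König's lemma. The only external input is that the restriction of Theorem~\ref{thm:comparability} to \emph{finite} graphs is provable in $\rca$: Ghouila-Houri's original argument, being for finite graphs, is elementary and uses no compactness, so it formalizes in $\rca$. Given $(V,E)$ with $V=\nat$ satisfying the hypothesis and an enumeration $e_0,e_1,\dots$ of $E$, let $T\subseteq 2^{<\nat}$ consist of those $\sigma$ for which the directions $\sigma$ assigns to $e_0,\dots,e_{|\sigma|-1}$ extend to a transitive orientation of the subgraph of $(V,E)$ induced by the vertices occurring in $e_0,\dots,e_{|\sigma|-1}$. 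Membership in $T$ is decided by a bounded search, so $T$ exists in $\rca$; it is a tree, because an induced subgraph of a transitive orientation is transitive; and it is infinite, because every finite induced subgraph $H$ of $(V,E)$ again satisfies the hypothesis — an odd cycle of $H$ is an odd cycle of $(V,E)$, and the endpoints of its triangular chord, being vertices of the cycle, lie in $H$ — so by the finite case $H$ has a transitive orientation, and the restrictions of such orientations to initial segments of $e_0,e_1,\dots$ are arbitrarily long strings in $T$. By $\wkl$ pick a path through $T$; it determines an orientation $\imp$ of $E$, and $\imp$ is transitive, for if $a\imp b$ and $b\imp c$ one takes $n$ large enough that $e_0,\dots,e_{n-1}$ includes $\{a,b\}$ and $\{b,c\}$: the corresponding initial segment of the path extends to a transitive orientation of an induced subgraph containing $a,b,c$, which forces $a\E c$, and (enlarging $n$ once more so that $\{a,c\}$ appears as well) orients $\{a,c\}$ from $a$ to $c$, in agreement with the path.

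For $(3)\Rightarrow(1)$ I would perform a reversal, deriving the $\Sigma^0_1$-separation principle (one of the standard equivalents of $\wkl$ over $\rca$). From two disjoint $\Sigma^0_1$ sets $A,B$, presented as ranges of computable functions so that the construction below stays $\Delta^0_1$, I would build a countable graph $G$ with: (a) $\rca$ proves that $G$ has no odd cycle without a triangular chord; and (b) from any pseudo-transitive orientation of $G$ one reads off, uniformly computably, a set separating $A$ and $B$. Then $(3)$ applied to $G$ yields the separating set. The graph would be assembled from ``wires'' — chains $v_0-v_1-\dots-v_m$ in which each consecutive triple is an induced three-vertex path, so that under any pseudo-transitive orientation the direction of the first edge determines that of the last — a distinguished reference wire, and, for each element enumerated into $A$ (respectively into $B$), a small gadget attached so as to force the wire indexed by that element to disagree (respectively agree) with the reference wire in a prescribed way; the separating set is then decoded by comparing the orientation of each wire with the reference. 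Since $A\cap B=\emptyset$ the two kinds of gadget never meet at the same wire, which is what makes clause (a) provable in $\rca$.

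The step I expect to be the genuine obstacle is the design of these gadgets. Pseudo-transitivity constrains an orientation only through its induced three-vertex paths — at the middle vertex of such a path both incident edges must point towards it or both away from it — and it is preserved under reversing every edge at once, so one can force only \emph{relations} between edge-directions, never an absolute direction; this is why the encoding must be phrased relative to the reference wire, the decoded set being the same for an orientation and its global reversal. Moreover $G$ cannot be bipartite: in a bipartite graph no directed three-vertex path can occur, so every vertex is a source or a sink and the pseudo-transitive orientations are the (computable) ones coming from proper $2$-colorings, encoding nothing. Hence $G$ must contain triangles, and therefore many odd cycles, every one of which has to be chorded in a way that $\rca$ can verify by a finite case analysis. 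Constructing such gadgets and carrying out that verification is the crux; by contrast, the decoding in $(3)\Rightarrow(1)$ is routine once the gadgets are available, and $(1)\Rightarrow(2)$ is a standard König's-lemma argument modulo the finite case of Theorem~\ref{thm:comparability}.
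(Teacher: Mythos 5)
Your $(1)\Rightarrow(2)$ and $(2)\Rightarrow(3)$ are fine and agree with the paper: the paper likewise handles $(1)\Rightarrow(2)$ by a compactness argument over the finite case of Theorem \ref{thm:comparability}, which is provable in $\rca$. The genuine gap is in $(3)\Rightarrow(1)$. You correctly pick the target (separation of the disjoint ranges of two injections $f,g$, the paper's \cite[Lemma IV.4.4]{sosoa}), but you never build the graph: you explicitly defer ``the design of these gadgets,'' and that design is the entire content of the reversal, so as it stands the implication is not proved. Moreover the architecture you sketch is itself doubtful: decoding by comparing each wire with a global \emph{reference wire} only makes sense inside a connected component, since pseudo-transitivity constrains directions only along induced paths and each component can be reversed independently. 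So either every wire must be joined to the reference --- and then you owe a verification of pseudo-transitivity-style constraints and of the odd-cycle/chord condition for one large connected graph, exactly the part left open --- or the decoded set is not well defined.

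The paper avoids a reference altogether by encoding with a property that is invariant under reversing a component, so the graph can be a disjoint union of uniformly small pieces and clause (a) is immediate. Concretely: for each $n$ take a triangle on $a_n,b_n,c_n$ with a pendant vertex $d_n$ attached to $c_n$; whenever $f(m)=n$ attach a pendant vertex $x_m$ to $a_n$, and whenever $g(m)=n$ attach $y_m$ to $b_n$. Every component is a triangle with pendant edges, so every odd cycle has a triangular chord, verifiably in $\rca$. In any pseudo-transitive orientation, a triangle vertex with a pendant neighbour must be a source or a sink of its triangle (pseudo-transitivity at the pendant edge), while an oriented triangle has at most two source-or-sink vertices, the third being ``middle.'' Now let
\[
X=\{n \mid b_n \imp a_n \leftarrow c_n \ \lor\ b_n \leftarrow a_n \imp c_n\},
\]
i.e.\ the set of $n$ for which $a_n$ is a source or sink of its triangle; $X$ is $\Delta^0_1$ in the orientation. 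If $f(m)=n$ then $x_m$ forces $a_n$ to be a source or sink, so $n\in X$; if $g(m)=n$ then $y_m$ and $d_n$ force $b_n$ and $c_n$ to be the source and sink, so $a_n$ is the middle vertex and $n\notin X$. Thus $\ran(f)\subseteq X$ and $\ran(g)\cap X=\emptyset$, yielding $\wkl$. Your instinct that triangles are indispensable was right; what was missing is this local, reversal-invariant encoding (``which vertex of the triangle is extreme'') in place of a global comparison with a reference.
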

\begin{proof}
(2) follows from (1) by a straightforward compactness argument, once the
result is proved for finite graphs. The latter can be done in $\rca$,
following any of the proofs mentioned above.

The implication from (2) to (3) is trivial.

To check that (3) implies (1) we use \cite[Lemma IV.4.4]{sosoa} stating that
$\wkl$ is equivalent to the statement that if $f,g\colon \nat \to \nat$ are
injective functions such that $\forall m\, \forall n\, (f(m) \ne g(n))$, then there
exists $X$ such that $\forall m \,(f(m) \in X \land g(m) \notin X)$. Fix $f$
and $g$ as above, and define a graph $(V,E)$ as follows: $V=\{a_n,b_n,c_n,d_n
\mid n\in \nat\}\cup \{x_m, y_m \mid m\in \nat\}$ and $E$ is defined by the
following clauses for each $n$ and $m$:
\[\begin{cases}
c_n \E a_n \E b_n \E c_n \E d_n; &\\
x_m \E a_n & \text{ if } f(m)=n;\\
y_m \E b_n & \text{ if } g(m)=n.
\end{cases}\]
Every cycle of odd length has a triangular chord because every connected
component of $(V,E)$ is isomorphic to a subgraph of the right graph in Figure
\ref{figReor:cycles}. Let $\imp$ be a pseudo-transitive orientation of $E$.
It is easy to check that the set
\[
X=\{n \in \nat \mid b_n \imp a_n \leftarrow c_n \lor b_n \leftarrow a_n \imp c_n \}
\]
contains the range of $f$ and is disjoint from the range of $g$.
\end{proof}

The proof of Lemma \ref{compgraph} yields the following results in the
framework of computability theory and of the Weihrauch lattice (see
\cite{BGP} for an introduction to this research program).

\begin{lemma}\label{incomp}
There exists a computable graph such that every cycle of odd length has a
triangular chord which has no computable pseudo-transitive orientation.

Every computable graph such that every cycle of odd length has a triangular
chord has a low transitive orientation.
\end{lemma}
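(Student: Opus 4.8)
The plan is to read off both statements from the proof of Lemma~\ref{compgraph}, combining it with two classical facts: every infinite computably enumerable set is the range of a computable injection, and the Low Basis Theorem.

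For the first statement, I would fix a pair of disjoint, infinite, computably enumerable sets $A_0,A_1$ that are computably inseparable; for instance $A_i=\{e\mid \varphi_e(e)\!\downarrow\, = i\}$, which are infinite because each of the two constant functions has infinitely many indices. Let $f,g\colon\nat\to\nat$ be computable injections with $\ran(f)=A_0$ and $\ran(g)=A_1$; then $f(m)\ne g(n)$ for all $m,n$, so $f$ and $g$ can be fed into the construction used in the proof that (3) implies (1) in Lemma~\ref{compgraph}. The resulting graph $(V,E)$ is computable, since its edge relation is decidable from $f$ and $g$, and every cycle of odd length has a triangular chord, because (as noted there) each connected component is isomorphic to a subgraph of the right-hand graph of Figure~\ref{figReor:cycles}. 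Now if $\imp$ were a computable pseudo-transitive orientation of $(V,E)$, the set $X$ displayed in that proof would be computable, and we showed there that $\ran(f)\subseteq X$ and $\ran(g)\cap X=\emptyset$; hence $X$ would computably separate $A_0$ from $A_1$, a contradiction. So $(V,E)$ has no computable pseudo-transitive orientation.

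For the second statement, let $(V,E)$ be any computable graph in which every cycle of odd length has a triangular chord. Coding an orientation of $(V,E)$ as an element of $2^{\nat}$ indexed by ordered pairs of vertices, the conditions of being an irreflexive, asymmetric orientation of $E$ and of being transitive are each $\Pi^0_1$ since $E$ is computable; thus the transitive orientations of $(V,E)$ form a $\Pi^0_1$ class $\mathcal{P}$ — equivalently, the set of paths through the computable binary tree implicit in the proof that (1) implies (2) in Lemma~\ref{compgraph}, whose nodes are the consistent finite approximations to a transitive orientation. By Theorem~\ref{thm:comparability} (equivalently, by item~(2) of Lemma~\ref{compgraph}), $(V,E)$ has a transitive orientation, so $\mathcal{P}\ne\emptyset$. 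Applying the Low Basis Theorem to $\mathcal{P}$ yields a low transitive orientation of $(V,E)$.

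The only delicate point — and the place where care is needed — is the effectiveness of the finite case of Theorem~\ref{thm:comparability}: one needs the tree defining $\mathcal{P}$ to be genuinely computable (immediate, since $E$ is computable) and genuinely infinite. Infinity reduces, exactly as in the proof of Lemma~\ref{compgraph}, to the fact that every finite induced subgraph of $(V,E)$ again satisfies the hypothesis — a cycle in such a subgraph, together with its triangular chord, lies entirely inside the subgraph — and hence admits a transitive orientation by the finite case of Theorem~\ref{thm:comparability}. Granting this, the Low Basis Theorem does the rest, and no genuinely new difficulty arises beyond the reverse-mathematical analysis already carried out for Lemma~\ref{compgraph}.
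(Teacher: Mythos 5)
Your proposal is correct and follows exactly the route the paper intends: the paper offers no separate proof of Lemma~\ref{incomp}, asserting only that it follows from the proof of Lemma~\ref{compgraph}, and your fleshing-out of that remark --- feeding a pair of computably inseparable c.e.\ sets (as ranges of computable injections) into the construction from the $(3)\Rightarrow(1)$ reversal for the first claim, and applying the Low Basis Theorem to the nonempty $\Pi^0_1$ class of transitive orientations arising from the compactness argument for the second --- is precisely the standard translation the authors have in mind. No gaps; the points you flag (computability of the edge relation, infinitude of the tree via the finite case of Theorem~\ref{thm:comparability}) are handled correctly.
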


\begin{lemma}\label{CCan}
Consider the multi-valued functions that map every countable graph such that
every cycle of odd length has a triangular chord to the set of its transitive
(resp.\ pseudo-transitive) orientations. Each of these two multi-valued
functions is Weihrauch equivalent to choice on Cantor space.
\end{lemma}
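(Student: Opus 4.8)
The plan is to show that each of the two multi-valued functions in the statement --- write $\mathsf{TO}$ for the one sending a graph to its transitive orientations and $\mathsf{PTO}$ for the one sending it to its pseudo-transitive orientations --- is Weihrauch equivalent to $\mathsf{C}_{2^{\nat}}$, closed choice on Cantor space, by proving the four reductions $\mathsf{TO},\mathsf{PTO}\leq_W\mathsf{C}_{2^{\nat}}$ and $\mathsf{C}_{2^{\nat}}\leq_W\mathsf{TO},\mathsf{PTO}$. Each of these uniformizes one half of the proof of Lemma~\ref{compgraph}. Throughout I use the standard facts (see \cite{BGP}) that $\mathsf{C}_{2^{\nat}}$ may be presented as the problem of finding a path through an infinite, finitely branching subtree of $2^{<\nat}$, and that $\mathsf{C}_{2^{\nat}}\equiv_W\widehat{\mathsf{LLPO}}$.

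For the two upper bounds: given a graph $G=(\nat,E)$ in which every odd cycle has a triangular chord, I would compute the tree $T_G\subseteq 2^{<\nat}$ whose strings $\sigma$ code, for each pair $\langle a,b\rangle<|\sigma|$, a tentative value for ``$a\imp b$'', retained only as long as the finitary requirements of being an orientation of $E$ and being transitive --- respectively pseudo-transitive --- on the pairs decided so far are met. This is exactly the tree behind the compactness argument for $(1)\Rightarrow(2)$ in Lemma~\ref{compgraph}, and $G\mapsto T_G$ is computable. Now $T_G$ has nodes of every length, because every finite induced subgraph of $G$ again satisfies the odd-cycle hypothesis and so, by the finite case of Theorem~\ref{thm:comparability} --- effective, as noted in the proof of Lemma~\ref{compgraph} --- has a transitive orientation, from which one reads off a node of $T_G$ of any prescribed length; hence $[T_G]\neq\emptyset$ by König's lemma. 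One application of $\mathsf{C}_{2^{\nat}}$ to $T_G$ yields a path, from which a transitive --- respectively pseudo-transitive --- orientation of $G$ is read off computably and uniformly. So $\mathsf{TO}\leq_W\mathsf{C}_{2^{\nat}}$ and $\mathsf{PTO}\leq_W\mathsf{C}_{2^{\nat}}$.

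For the two lower bounds I would factor through the separation problem $\mathsf{SEP}$, whose instances are pairs of injective $f,g\colon\nat\to\nat$ with $\ran f\cap\ran g=\emptyset$ and whose solutions to $(f,g)$ are the sets $X$ with $\ran f\subseteq X$ and $X\cap\ran g=\emptyset$. The reduction $\mathsf{SEP}\leq_W\mathsf{PTO}$, and verbatim $\mathsf{SEP}\leq_W\mathsf{TO}$, is the implication $(3)\Rightarrow(1)$ of Lemma~\ref{compgraph} made uniform: from $(f,g)$ compute the graph $G$ on $a_n,b_n,c_n,d_n,x_m,y_m$ defined there, which satisfies the odd-cycle hypothesis for every such $(f,g)$ since each of its connected components embeds in the right-hand graph of Figure~\ref{figReor:cycles}; and from any pseudo-transitive orientation $\imp$ of $G$ --- in particular from any transitive one, a transitive orientation being pseudo-transitive and a solution of $\mathsf{TO}$ as well --- the set $X=\{\,n\mid b_n\imp a_n\leftarrow c_n\ \lor\ b_n\leftarrow a_n\imp c_n\,\}$ is computable from $\imp$ and, by the argument there, separates $\ran f$ from $\ran g$. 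It remains to note $\mathsf{C}_{2^{\nat}}\leq_W\mathsf{SEP}$: using $\mathsf{C}_{2^{\nat}}\equiv_W\widehat{\mathsf{LLPO}}$, I would reduce a sequence $(p_k)_{k\in\nat}$ of $\mathsf{LLPO}$-instances (each $p_k$ has at most one nonzero entry, and an output $i\in\{0,1\}$ must satisfy $p_k(2n+i)=0$ for all $n$) to $\mathsf{SEP}$ by reserving the coordinate $5k$ for the $k$-th instance and computably building $f,g$ so that $5k\in\ran f$ exactly when some $p_k(2n)\neq 0$ and $5k\in\ran g$ exactly when some $p_k(2n+1)\neq 0$ --- mutually exclusive by the promise, so $\ran f\cap\ran g=\emptyset$ --- while permanently enumerating the disjoint infinite sets $\{5k+1\mid k\in\nat\}$ into $\ran f$ and $\{5k+2\mid k\in\nat\}$ into $\ran g$, which lets $f,g$ be taken total and injective. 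A solution $X$ of $(f,g)$ then solves $p_k$ by outputting $1$ if $5k\in X$ and $0$ otherwise. Composing would give $\mathsf{C}_{2^{\nat}}\leq_W\mathsf{SEP}\leq_W\mathsf{TO}$ and $\mathsf{C}_{2^{\nat}}\leq_W\mathsf{SEP}\leq_W\mathsf{PTO}$.

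The only step with real content is this last one, $\mathsf{C}_{2^{\nat}}\leq_W\mathsf{SEP}$, equivalently the essentially folklore identity $\mathsf{SEP}\equiv_W\mathsf{C}_{2^{\nat}}$: one has to see that the $\mathsf{LLPO}$-promise is precisely what keeps the two enumerated sets disjoint and that the dummy elements keep the enumerations total and injective. Everything else is the compactness argument and the graph construction already in the proof of Lemma~\ref{compgraph}, now merely observed to be uniform; since all the functionals involved are computable one obtains genuine Weihrauch equivalence, as claimed.
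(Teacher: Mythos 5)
Your proof is correct and follows essentially the route the paper intends: the authors prove Lemma \ref{CCan} implicitly by observing that both halves of the proof of Lemma \ref{compgraph} are uniform --- the compactness tree gives the reduction to $\mathsf{C}_{2^{\nat}}$, and the gadget graph built from $(f,g)$ gives the reduction from the separation problem, which is Weihrauch equivalent to $\mathsf{C}_{2^{\nat}}$ --- exactly as you do. Your explicit verification of the folklore equivalence of separation with $\mathsf{C}_{2^{\nat}}$ (via $\widehat{\mathsf{LLPO}}$) is a detail the paper leaves to the cited literature, not a different approach.
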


Starting with \cite{gilmore1964} there has been an interest in algorithms
providing transitive orientations for finite comparability graphs. For
example, the influential textbook \cite{Golumbic} devotes a whole chapter to
algorithmic aspects of comparability graphs, including complexity issues.
However, the first part of Lemma \ref{incomp} shows that there is no
algorithm to (pseudo-)transitively orient countable computability graphs. In
particular, an algorithm which computes a (pseudo-)transitive orientation of
finite comparability graphs cannot work in an incremental way (i.e.\
extending the previous orientation as new vertices are added to the graph),
and thus is not on-line. Here we understand the notion of on-line algorithm
as defined in \cite{BDKM}, which is a recent survey of the theoretical study
of on-line algorithms for computable structures.\medskip

Lemmas \ref{compgraph}, \ref{incomp}, and \ref{CCan} provide an analysis of
the first step in Ghouila-Houri's proof of Theorem \ref{thm:comparability}.
Our main interest is the analysis of the complexity of the second step of
this proof, which is best stated using \emph{oriented graphs}, i.e.\ directed
graphs such that at most one of the edges between two vertices exist. In this
paper we abbreviate \lq oriented graph\rq\ as \emph{ograph}. The notions of
pseudo-transitivity and transitivity are readily extended to ographs, and a
reorientation of an ograph is an ograph obtained by reversing some of the
edges. Then the second step of Ghouila-Houri's proof is the following result.

\begin{theorem}\label{GhouilaHouri}
Every pseudo-transitive ograph has a transitive reorientation.
\end{theorem}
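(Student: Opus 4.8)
The plan is to exhibit an explicit on-line algorithm. The vertices of the input pseudo-transitive ograph $G$ are presented one at a time as $v_0,v_1,\dots$; at stage $n$ we see the edges, with their $G$-orientations, between $v_n$ and $v_0,\dots,v_{n-1}$, and we must immediately and irrevocably choose the orientation of each of these edges in the output. Writing $G_n$ for the sub-ograph induced on $\{v_0,\dots,v_{n-1}\}$ and $H_n$ for the reorientation produced so far on those vertices, the basic invariant is that $H_n$ is a transitive reorientation of $G_n$. Since $H_{n+1}$ extends $H_n$ and transitivity is a property of triples of vertices, the union $H=\bigcup_n H_n$ is then automatically a transitive reorientation of $G$. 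So the whole problem becomes local: given the transitive partial order $H_n$ together with the list of $G$-neighbours of $v_n$ among $v_0,\dots,v_{n-1}$ and the directions prescribed by $G$, split this neighbour set into a ``lower part'' and an ``upper part'' so that inserting $v_n$ below everything in the former and above everything in the latter keeps the order transitive and gives $v_n$ exactly the prescribed set of comparabilities.

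The content of the paper, reflected in its title, is that such a split always exists and can be found without ever revisiting a past decision — which fails for the analogous problem of \emph{orienting} (Lemma~\ref{incomp}) and uses in an essential way that the input already carries a pseudo-transitive orientation. The invariant on $H_n$ must therefore be strengthened beyond mere transitivity, so as to record in a bounded way how $H_n$ diverges from $G_n$. That divergence is tightly controlled: in a pseudo-transitive ograph the only failures of transitivity are directed triangles, since $a\imp b\imp c$ with $\lnot(a\imp c)$ forces $c\imp a$ and hence the $3$-cycle $a\imp b\imp c\imp a$; we call these the \ght s. For each \ght{} of $G_n$ the reorientation must flip exactly one of its three edges, and the strengthened invariant pins down which flips have already been made and which are still available — this is the ``lazy'' ingredient: a \ght{} is recorded but left unresolved, its flip not propagated, until a later vertex forces the choice, at which point the choice is determined. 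The running state of the algorithm is thus $H_n$ together with the list of currently unresolved triples (the \lt s).

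Concretely I would proceed as follows. (1) Prove the structural lemma just sketched, and analyse how the new directed triangles created by the arrival of $v_n$ constrain the admissible orientations of the edges at $v_n$: this is the counterpart, in the already-oriented setting, of the implication-class (``$\Gamma$-class'', Triangle-Lemma) analysis that underlies the proof of Theorem~\ref{thm:comparability}. (2) Using that lemma and the current list of \lt s, show that the prescribed directions at $v_n$ determine a legal split into lower and upper parts, i.e.\ that $v_n$ can always be inserted into $H_n$; and describe the bookkeeping update producing the new list of \lt s. (3) Verify that the new state again satisfies the invariant — in particular that the orientation chosen for an edge of $v_n$ never clashes with a commitment made at an earlier stage, by arguing that such a clash would exhibit inside $G_{n+1}$ a directed $2$-path with non-adjacent endpoints, contradicting pseudo-transitivity of $G$.

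The main obstacle is step (3): showing that the laziness is \emph{enough} — that the freedom retained by leaving \ght s unresolved always suffices to absorb a newly arrived vertex, so that no previously oriented edge ever has to be reversed. This is where the case analysis lives, splitting on the position of the neighbours of $v_n$ in $H_n$ relative to the current \lt s, and it is the one place where pseudo-transitivity of the whole input graph (not merely of each $G_n$) is invoked. Finally, since every stage is a finite effective computation on finite data and the orientation of each edge is fixed at a finite stage, the construction is formalizable in $\rca$, which yields Theorem~\ref{GhouilaHouri} in $\rca$; together with Lemma~\ref{compgraph} this places the whole of Ghouila-Houri's argument for Theorem~\ref{thm:comparability} exactly at the level of $\wkl$.
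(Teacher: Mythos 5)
Your overall architecture is the right one, and it is the paper's: insert vertices one at a time, maintain a transitive reorientation of the finite part seen so far, and strengthen the invariant beyond transitivity so that the current reorientation is always \emph{extendible}; the strengthening is indeed a laziness condition (flip an edge only when a non-transitive triangle forces it). But as written this is a plan, not a proof: your steps (1)--(3) are precisely where the entire mathematical content of the theorem lives, and none of them is carried out. Worse, the specific commitments you do make point in the wrong direction. You propose to record, as the extra state, the list of unresolved directed $3$-cycles, and to argue in step (3) that any clash with an earlier commitment ``would exhibit a directed $2$-path with non-adjacent endpoints, contradicting pseudo-transitivity.'' Neither claim survives contact with the actual obstructions. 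The configurations that make a transitive reorientation non-extendible (Lemma \ref{Nec}, Examples \ref{ex:trans} and \ref{ex:2+2}) contain no directed $3$-cycle at all: one is a \emph{transitive} triangle $a\imp c\leftarrow b$ whose middle vertex happens to lie $\prec$-between $a$ and $b$, the other is a $2\oplus 2$ consisting of two disjoint edges. So an invariant phrased as ``which triangles are still unresolved'' does not visibly rule them out, and a clash produced by one of them is not a pseudo-transitivity violation in the input graph --- these are genuine pseudo-transitive extensions with no compatible placement of the new vertex. Also, your parenthetical ``at which point the choice is determined'' is false in general: when a new vertex forces a flip there may be several legal placements, and the algorithm must make a choice (the paper's clause (3c) and the use of the $<$-least witness) that keeps the invariant.

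What is actually needed, and what the paper supplies, is: a characterization of extendibility by structural conditions on $(V,\imp,\prec)$ alone (the formulas $\Phi$, $\Psi$, $\Theta$ of Definitions \ref{def:Varphi} and \ref{def:Theta}, whose witnesses are zig-zag paths $e_0,\dots,e_n$, not single triangles); a concrete insertion rule (the smart extension of Definition \ref{def:<'}, built from the sets $N^{\pm}(x)$, $S^{\pm}_i(x)$, $T(x)$); a proof that under $\Phi$, $\Psi$, $\Theta$ the insertion is always possible and transitive (Lemmas \ref{characterization}, \ref{char2Frecce}, \ref{CompS+-}, \ref{trans}); and a proof that the insertion rule re-establishes laziness and hence $\Phi$, $\Psi$, $\Theta$ at the next stage (Property \ref{TriangNonTrans} and Lemma \ref{PsiPhiThetaSigma}). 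Your proposal gestures at this loop but proves none of its links, and the link you sketch explicitly (clash $\Rightarrow$ pseudo-transitivity violation) is exactly the step that fails; in the hard cases the contradiction comes instead from the zig-zag witnesses that laziness guarantees. So the proposal identifies the same strategy as the paper but has a genuine gap at its core.
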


This is the main lemma in \cite{Ghouila62}, the lemma on page 329 in
\cite{Ghouila64}, Theorem 16.7 in \cite{Berge76}, Theorem 1.5 in
\cite{fishburn}, and Theorem 11.2.2 in \cite{Harzheim}. Ghouila-Houri's proof
deals only with finite graphs and uses induction on the number of vertices.
The same proof is presented in \cite{Berge76,fishburn,Harzheim} and extended
to the infinite case by some compactness argument. From this proof it is easy
to extract an algorithm to transitively reorient finite pseudo-transitive
ographs. However, the induction step requires, in a nutshell, partitioning
the set of vertices into two subsets with specific properties, to reorient
each of the induced subographs by induction hypothesis, and then to set the
reorientation between them. Thus this algorithm is not incremental and does
not apply to infinite ographs.

This analysis led us to conjecture that we could obtain results similar to
Lemmas \ref{compgraph}, \ref{incomp} and \ref{CCan} for Theorem
\ref{GhouilaHouri}. We were actually wrong and this is the main result of
this paper. We state this result in various different ways (the first three
items of the theorem correspond to the approaches of Lemmas \ref{compgraph},
\ref{incomp} and \ref{CCan}, respectively).

\begin{maintheorem}\label{main}
\hfill
\begin{enumerate}
  \item $\rca$ proves that every countable pseudo-transitive ograph has a
      transitive reorientation;
  \item every computable pseudo-transitive ograph has a computable
      transitive reorientation;
  \item the multi-valued function that maps a countable pseudo-transitive
      ograph to the set of its transitive reorientations is computable;
  \item there exists an on-line (incremental) algorithm to transitively
      reorient pseudo-transitive ographs;
  \item Player II has a winning strategy for the following game: starting
      from the empty graph, at step $s+1$ player I plays a
      pseudo-transitive extension $(V_s \cup \{x_s\}, \imp_{s+1})$ of the
      pseudo-transitive ograph $(V_s,\imp_s)$ he played at step $s$.
      Player II replies with a transitive reorientation $\prec_{s+1}$ of
      $\imp_{s+1}$ such that $\prec_{s+1}$ extends $\prec_s$ she defined at
      step $s$. Player II wins if and only if she is always able to play.
\end{enumerate}\end{maintheorem}

We concentrate on proving (4) of the Main Theorem, as this easily implies
(1), (2) and (3), while (5) is just a restatement in a different language of
(4) for countable ographs.

We now make precise what we mean by an on-line (incremental) algorithm. We
assume the input to consist of vertices coming one at a time together with
all information about the edges connecting them to previous vertices. (So at
step $s$ the size of the input increases of at most $s$.) When the algorithm
sees a new vertex, it must reorient all the edges connecting it to previous
vertices while preserving the reorientations already set at previous stages.

We deal explicitly only with countable ographs; however it is easily seen
that our algorithm applies to ographs of any cardinality, as long as the set
of vertices can be well-ordered.

An upper bound for the complexity of the algorithm we define (when applied to
finite pseudo-transitive ographs) is $O(|V|^3)$. The problem of orienting
comparability graphs can be solved by an algorithm with complexity $O(\delta
\cdot |E|)$, where $\delta$ is the maximum degree of a vertex (\cite[Theorem
5.33]{Golumbic}), and further fine-tuning has been subsequently
made.\smallskip

We now describe the organization of the paper. Section \ref{secReor:prelim}
contains the preliminary definitions and a presentation of two
pseudo-transitive ographs with transitive reorientations which are the main
obstacles in designing the algorithm. Sections \ref{secReor:first} and
\ref{secReor:second} analyze in detail these two configurations. In Section
\ref{sec:algo} we present the on-line algorithm and prove its correctness. We
also sketch the ideas needed to obtain the upper bound for the complexity
mentioned above.

\section{Preliminaries}\label{secReor:prelim}

In the introduction we have already introduced our terminology and we now
give the formal definitions of the central notions.

\begin{definition}
An ograph $(V,\imp)$ is \emph{transitive} if for each $a,b,c\in V$, if $a\imp
b\imp c$, then $a\imp c$. $(V,\imp)$ is \emph{pseudo-transitive} if for each
$a,b,c\in V$, if $a\imp b\imp c$, then $a\imp c$ or $c\imp a$.

A relation $R$ on $V$ is a \emph{reorientation} of $\imp$, if for each
$a,b\in V$, if $a\imp b$ then either $a \R b$ or $b \R a$ and if $a \R b$
then either $a\imp b$ or $b\imp a$.

A \emph{transitive reorientation} of $(V, \imp)$ is a reorientation of $(V,
\imp)$ which is also transitive. In this case we often use $\prec$ in place
of $R$.

A triple $(V, \imp, \prec)$ is a \emph{Ghouila-Houri triple} (\emph{\ght} for
short) if $(V, \imp)$ is a pseudo-transitive ograph and $\prec$ a transitive
reorientation of $\imp$.
\end{definition}

Notice that each reorientation $R$ of $(V, \imp)$ preserves both
$\imp$-comparability and $\imp$-incomparability. In other words, the
undirected graphs associated with $(V,\imp)$ and with $(V, R)$ coincide.

\begin{notation} Let $(V,\imp)$ be an ograph and $a,b,c\in V$.
\begin{itemize}
\item $a-b$ means that either $a\imp b$ or $b\imp a$;
\item $N(a)= \{b \in V \mid a-b\}$ is the \emph{neighborhood of $a$};
\item $a\mid b$ means that neither $a \imp b$ nor $b \imp a$;
\item when we write \lq $a-b$ by \pt c\rq\ we mean that we know that $\imp$
    is pseudo-transitive and we are deducing $a-b$ because we have either
    $a \imp c \imp b$ or $b \imp c \imp a$.
\end{itemize}
\end{notation}

\begin{definition}
Let $(V,\imp)$ be an ograph. If $V' \supseteq V$ we say that $(V', \imp')$ is
an \emph{extension} of $(V,\imp)$ if $(V', \imp')$ is an ograph such that for
every $a,b \in V$ we have $a \imp b$ if and only if $a \imp' b$.
\end{definition}

An on-line algorithm computing a transitive reorientation of a
pseudo-transitive ograph must produce at each step a reorientation which can
further be extended, in the sense made precise by the following definition.

\begin{definition}
A \ght\ $(V,\imp,\prec)$ is \emph{extendible} if for every $(V \cup \{x\},
\imp')$, pseudo-transitive extension of $(V,\imp)$, there exists $\prec'$
which extends $\prec$ and is such that $(V \cup \{x\},\imp',\prec')$ is a
\ght.
\end{definition}

Some simple cases of \ght s which are not extendible are depicted in Figures
\ref{fig:trans} and \ref{fig:2+2}.
\begin{ex}\label{ex:trans}
In Figure \ref{fig:trans} we have the \emph{transitive triangle examples}:
$\imp$ is transitive on $\{a,b,c\}$ and the transitive reorientation is
defined by $a \prec c \prec b$. Notice that in the left ograph we have $a
\imp c \leftarrow b$, while in the right one we have $a \leftarrow c \imp b$:
in both cases all edges involving the vertex $c$ have the same direction. We
can add a vertex $x$ connected to $c$ by an edge going in the same direction
and connected with neither $a$ nor $b$. Then $(\{a,b,c,x\}, \imp')$ is
pseudo-transitive and if $\prec'$ is a reorientation of $\imp'$ extending
$\prec$ we must have either $x \prec' c$ or $c \prec' x$: both choices lead
to the failure of transitivity of $\prec'$.
\end{ex}

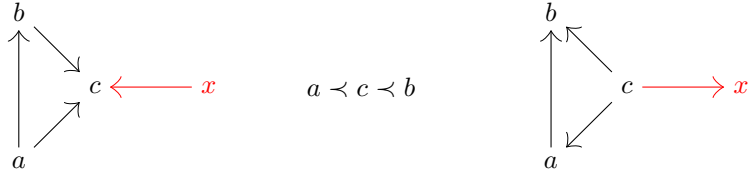
\begin{figure}
\begin{centering}
\begin{tikzpicture}
\node (a) at (0,0){$a$};
\node (b) at (0,2){$b$};
\node (c) at (1,1){$c$};
\node[red] (x) at (2.5,1){$x$};

\draw[-{Classical TikZ Rightarrow[length=1.5mm]}]  (a)--(b);
\draw[-{Classical TikZ Rightarrow[length=1.5mm]}] (b)--(c);
\draw[-{Classical TikZ Rightarrow[length=1.5mm]}] (a)--(c);
\draw[-{Classical TikZ Rightarrow[length=1.5mm]},red] (x)--(c);

\node at (4.5,1){$a \prec c \prec b$};

\node (a0) at (7,0){$a$};
\node (b0) at (7,2){$b$};
\node (c0) at (8,1){$c$};
\node[red] (x0) at (9.5,1){$x$};

\draw[-{Classical TikZ Rightarrow[length=1.5mm]}] (a0)--(b0);
\draw[-{Classical TikZ Rightarrow[length=1.5mm]}] (c0)--(b0);
\draw[-{Classical TikZ Rightarrow[length=1.5mm]}] (c0)--(a0);
\draw[-{Classical TikZ Rightarrow[length=1.5mm]},red] (c0)--(x0);
\end{tikzpicture}
\caption{\small The transitive triangle examples.}\label{fig:trans}
\end{centering}
\end{figure}

\begin{ex}\label{ex:2+2}
In Figure \ref{fig:2+2} we have the \emph{$2 \oplus 2$ example}: there are
two edges $a \imp c$ and $b \imp d$ (with no other edges between these four
vertices) and the transitive reorientation defined by $a \prec c$ and $d
\prec b$. In the left ograph we add a vertex $x$ such that $a \imp' x$, $b
\imp' x$, $x \mid 'c$ and $x \mid' d$. Then $(\{a,b,c,d,x\}, \imp')$ is
pseudo-transitive. Suppose $\prec'$ were a transitive reorientation of
$\imp'$ extending $\prec$: since $a-'x$ and $x \mid 'c$, then $a \prec c$
implies $a \prec' x$; since $b-'x$ and $x \mid 'd$, then $d \prec b$ implies
$x \prec' b$. But $a \prec' x \prec' b$ is not compatible with $a\mid b$. The
situation in the right ograph is the same as the previous one as far as the
first four vertices are concerned, but the new vertex $x$ is now such that $x
\imp' c$, $x \imp' d$, $x\mid 'a$ and $x\mid 'b$. We can argue analogously to
show that $(\{a,b,c,d,x\}, \imp')$ is a pseudo-transitive ograph with no
transitive reorientation extending $\prec$.
\end{ex}

\begin{figure}
\begin{centering}
\begin{tikzpicture}
\node (a2) at (0,0){$a$};
\node (b2) at (2.5,2){$b$};
\node (c2) at (0,2){$c$};
\node (d2) at (2.5,0){$d$};

\node[red] (x2) at (1.25,1){$x$};

\draw[-{Classical TikZ Rightarrow[length=1.5mm]}] (a2)--(c2);
\draw[-{Classical TikZ Rightarrow[length=1.5mm]}] (b2)--(d2);
\draw[-{Classical TikZ Rightarrow[length=1.5mm]},red] (a2)--(x2);
\draw[-{Classical TikZ Rightarrow[length=1.5mm]},red] (b2)--(x2);

\node (a3) at (7,0){$a$};
\node (b3) at (9.5,2){$b$};
\node (c3) at (7,2){$c$};
\node (d3) at (9.5,0){$d$};

\node[red] (x3) at (8.25,1){$x$};

\draw[-{Classical TikZ Rightarrow[length=1.5mm]}] (a3)--(c3);
\draw[-{Classical TikZ Rightarrow[length=1.5mm]}] (b3)--(d3);
\draw[-{Classical TikZ Rightarrow[length=1.5mm]},red] (x3)--(c3);
\draw[-{Classical TikZ Rightarrow[length=1.5mm]},red] (x3)--(d3);

\node at (4.7,1.5){$a \prec c$};
\node at (4.7,0.5){$d \prec b$};
\end{tikzpicture}
\caption{\small The $2 \oplus 2$ example.}\label{fig:2+2}
\end{centering}
\end{figure}
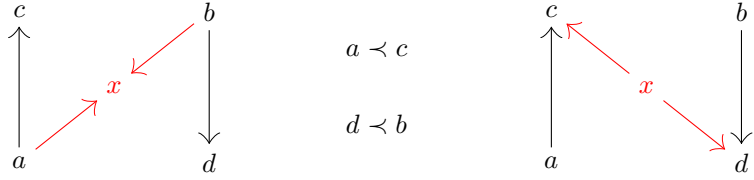

We eventually show that the examples above are the only obstructions to
extendibility of a \ght. To do this we analyze in detail Examples
\ref{ex:trans} and \ref{ex:2+2} using the following notions.

\begin{definition}\label{def:N(X)}
Let $(V,\imp,\prec)$ be a \ght. If $(V\cup\{x\}, \imp')$ is a pseudo-transitive
extension of $(V,\imp)$ define
\begin{align*}
  N^+(x)= & \{a\in N(x) \mid \forall b\, (a \prec b \Imp b\in N(x))\}; \\
  N^-(x)= & \{a\in N(x) \mid \forall b\, (b \prec a \Imp b\in N(x))\}.
\end{align*}
(Here $N(x)$ is the neighborhood of $x$ in $(V\cup\{x\}, \imp')$.)
\end{definition}

\begin{remark}
Under the hypothesis of the previous definition we have that if $a \in N(x)
\setminus N^+(x)$, $b \in N^+(x)$ and $a-b$, then $a \prec b$. In fact, since
$a \notin N^+(x)$ there is $d \succ a$ with $d \mid x$. If $b \prec a$, then
$b \prec d$ against $b \in N^+(x)$. Thus $a\prec b$.

Similarly, if $c \in N(x) \setminus N^-(x)$, $b \in N^-(x)$, and $b-c$, then
$b \prec c$.
\end{remark}

The next lemma states some properties of extendible \ght s.

If $A,B \subseteq V$ we write $A \prec B$ to mean that $a \prec b$ for every
$a \in A$ and $b \in B$.

\begin{lemma}\label{Nec}
Let $(V,\imp,\prec)$ be an extendible \ght. Then for any $(V\cup\{x\},
\imp')$ pseudo-transitive extension of $(V,\imp)$ we have:
\begin{enumerate}
	\item \label{NecUno} $N(x)=N^+(x)\cup N^-(x)$;
	\item \label{NecDue} $N^-(x)\setminus N^+(x) \prec N^+(x)\setminus N^-(x)$.
\end{enumerate} 	
\end{lemma}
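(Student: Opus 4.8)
Both parts of Lemma~\ref{Nec} say that extendibility forces strong structural constraints on $N(x)$, and the natural strategy is contrapositive: assume the conclusion fails, and build an explicit pseudo-transitive extension $(V \cup \{x,y\}, \imp'')$ of $(V,\imp')$ — adding one further vertex $y$ — that witnesses non-extendibility of the \ght\ $(V \cup \{x\}, \imp', \prec')$ for every transitive reorientation $\prec'$ of $\imp'$ extending $\prec$. Since an extendible \ght\ must remain extendible after any pseudo-transitive extension by one vertex (this is immediate from the definition: if $(V \cup \{x\}, \imp', \prec')$ failed to be extendible for some extension $\prec'$ of $\prec$, then already $(V,\imp,\prec)$ would not be extendible via the extension $\imp'$ followed by the bad move), deriving non-extendibility of $(V \cup \{x\}, \imp')$ contradicts the hypothesis. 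The configurations of Examples~\ref{ex:trans} and \ref{ex:2+2} are precisely the templates for the vertex $y$ we will attach.

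**Part (1).** Suppose $a \in N(x) \setminus (N^+(x) \cup N^-(x))$. Then there are $b \succ a$ with $b \mid x$ and $c \prec a$ with $c \mid x$. I would first record that $a$, $b$, $c$ sit in a transitive triangle for $\prec$ (we have $c \prec a \prec b$, hence $c \prec b$ by transitivity, so $c \E b$ by the reorientation property, and then $c - b$ in $\imp$). Now introduce a new vertex $y$ attached \emph{only} to $a$, with the edge oriented in $\imp''$ to agree in direction with the edge between $a$ and $x$ — i.e.\ if $a \imp' x$ put $a \imp'' y$, if $x \imp' a$ put $y \imp'' a$ — and with $y \mid'' v$ for every other $v \in V \cup \{x\}$. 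The point is that any $\prec''$ extending $\prec'$ must decide $y$ versus $a$, and since $y$ is comparable only with $a$ while $a$ lies strictly between $c \prec a \prec b$ in the $\prec'$-order and both $c,b \mid'' y$, either choice $y \prec'' a$ or $a \prec'' y$ produces a forbidden comparability ($c \prec'' y$ or $y \prec'' b$, respectively, forcing an edge $c \E y$ or $y \E b$ that does not exist). One must check $(V \cup \{x,y\}, \imp'')$ is pseudo-transitive: the only new $\imp''$-walks of length two through $y$ have $y$ as an endpoint and pass through $a$; if $a \imp'' y$ and $w \imp'' a$ then by $\pt a$ in the old ograph $w$ and the $x$-direction neighbor... — here the direction choice for the $y$-edge is exactly what makes the relevant two-step paths land on a vertex already comparable to the other endpoint of $a$'s edge, or else land on $x$-incomparable vertices whose incomparability with $y$ is harmless. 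This verification is the bookkeeping part; it mirrors Example~\ref{ex:trans}.

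**Part (2).** By~(1) we may assume $N(x) = N^+(x) \cup N^-(x)$. Suppose the conclusion of~(2) fails: there are $a \in N^-(x) \setminus N^+(x)$ and $b \in N^+(x) \setminus N^-(x)$ with $b \prec a$ (the relation $a-b$ holds or not; if $a \mid b$ we are in the genuine $2\oplus2$ situation, if $a - b$ then $b \prec a$ already contradicts the Remark, so the real case is $a \mid b$). Since $a \notin N^+(x)$ there is $c \succ a$ with $c \mid x$; since $b \notin N^-(x)$ there is $d \prec b$ with $d \mid x$. Then $a \prec c$, $d \prec b$, and $a \mid b$ together with $c \mid x$, $d \mid x$ reproduce the left configuration of Example~\ref{ex:2+2} with $x$ in the role of the apex: indeed $a \in N^-(x)$ and $a \prec c$ with $c \mid x$ shows, via the defining property of $N^-$, that... — more directly, I would attach a new vertex $y$ with $a \imp'' y$, $b \imp'' y$ (matching, say, the case where both $a$-$x$ and $b$-$x$ edges make $a,b$ \emph{sources} relative to $x$; the $N^\pm$ membership of $a$ and $b$ is exactly what guarantees the edges $a$-$x$ and $b$-$x$ have directions consistent with such a common extension, so pseudo-transitivity through $y$ reduces to pseudo-transitivity through $x$ in the old ograph) and $y \mid'' v$ for all other $v$. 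For any transitive $\prec''$ extending $\prec'$: $a - y$ with $c \mid y$ and $a \prec c$ forces $a \prec'' y$; $b - y$ with $d \mid y$ and $d \prec b$ forces $y \prec'' b$; hence $a \prec'' b$, contradicting $a \mid b$. The symmetric case (where the witnesses make $a,b$ sinks) is handled by the mirror construction with $y \imp'' a$, $y \imp'' b$.

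**Main obstacle.** The conceptual steps are short; the real work is verifying in each construction that adding $y$ keeps the ograph pseudo-transitive, and that is where the precise choice of the direction of the $y$-edges — dictated by which of $N^+(x)$, $N^-(x)$ the attachment vertex belongs to, and by the direction of its edge to $x$ — has to be made to match. I expect a small case analysis (a handful of cases for part~(1) according to the direction of $a$-$x$, and two mirror cases for part~(2)) and I would organize it by reducing every new two-step $\imp''$-path through $y$ to an already-known $\imp$-relation or to one of the declared incomparabilities with $y$. No step is deep, but getting the directions to line up is the one place where a careless choice breaks the argument.
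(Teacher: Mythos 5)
There is a genuine gap, and it is architectural. In Lemma \ref{Nec} the extension $(V\cup\{x\},\imp')$ is \emph{given} and is already pseudo-transitive, and $x$ itself plays the role of the apex of Examples \ref{ex:trans} and \ref{ex:2+2}; no second vertex $y$ is needed. If $c\in N(x)\setminus(N^+(x)\cup N^-(x))$, pick $a\prec c\prec b$ with $a,b\notin N(x)$: then for any transitive reorientation $\prec'$ of $\imp'$ extending $\prec$, $c\prec' x$ forces $a\prec' x$ (hence $a-'x$, false) and $x\prec' c$ forces $x\prec' b$ (hence $x-'b$, false), so no $\prec'$ exists at all and $(V,\imp,\prec)$ is not extendible. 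The same forcing argument you wrote for $y$ in part (2) applies verbatim to $x$: $a\prec' x\prec' b$ gives $a\prec' b$, i.e.\ $a\prec b$, contradiction. This is the paper's entire proof. Your detour through a two-vertex extension then has to be bridged back to the definition by the claim that an extendible \ght\ admits, for each one-vertex extension, a reorientation that is \emph{again} extendible. That claim is not ``immediate from the definition'': extendibility as defined quantifies only over one-vertex extensions of $(V,\imp)$ and says nothing about whether any chosen $\prec'$ can itself be extended; moreover your parenthetical justification has a quantifier slip (one non-extendible $\prec'$ does not impugn $(V,\imp,\prec)$, since another reorientation of the same $\imp'$ might work). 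Proving the propagation you invoke essentially requires the $\Phi$, $\Psi$, $\Theta$ characterization developed only later in the paper, so as written the reduction is unsupported.

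The constructions themselves also fail the verification you deferred as ``bookkeeping''. Attaching $y$ only to $a$ (or only to $a$ and $b$), with every other vertex incomparable to $y$, is in general not pseudo-transitive no matter how you orient the new edge: if $a\imp''y$ and some $w$ satisfies $w\imp a$, then $w\imp'' a\imp'' y$ with $w\mid'' y$ violates pseudo-transitivity (and such $w$ typically exists --- e.g.\ the vertex $c$ with $c\prec a$ is comparable to $a$); the mirror problem occurs when $y\imp'' a$ and $a$ has an out-neighbor. Example \ref{ex:trans} only works because there the attachment vertex has \emph{all} its edges in one direction. Making such an added vertex pseudo-transitively consistent in a general ograph is exactly the content of the staged closure constructions in Lemmas \ref{sufTrian} and \ref{suf2Frecce}, i.e.\ the harder converse direction --- which Lemma \ref{Nec} does not need, because the problematic extension is handed to you by hypothesis.
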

\begin{proof}
If condition (1) does not hold for some pseudo-transitive extension
$(V\cup\{x\}, \imp')$, then there exist $c\in N(x)$ and $a,b\not\in N(x)$
such that $a \prec c \prec b$. This impedes both $x \prec' c$ and $c \prec'
x$ for any transitive reorientation of $\imp'$ with ${\prec'} \supseteq
{\prec}$. (Notice that we found in $(V,\imp,\prec)$ a copy of one of the
transitive triangle examples.)

If condition (2) does not hold for some pseudo-transitive extension
$(V\cup\{x\}, \imp')$, then there exist $a\in N^-(x)\setminus N^+(x)$ and
$b\in N^+(x)\setminus N^-(x)$ such that $a\nprec b$. Since $a\in
N^-(x)\setminus N^+(x)$ there exists $c$ such that $a \prec c$ and $c\mid
'x$. Since $b\in N^+(x)\setminus N^-(x)$, there exists $d$ such that $d \prec
b$ and $d\mid 'x$. If $\prec'$ were a transitive reorientation of $\imp'$
with ${\prec'} \supseteq {\prec}$ then these conditions imply respectively $a
\prec' x$ and $x \prec' b$; it would follow $a \prec' b$, contrary to $a
\nprec b$. (Notice that in this case we found in $(V,\imp,\prec)$ a copy of
the $2 \oplus 2$ example.)
\end{proof}

\section{Avoiding the transitive triangle examples}\label{secReor:first}

This section is devoted to a careful study of the first condition of Lemma
\ref{Nec}. The next lemma shows that this condition captures precisely the
lack of the transitive triangle examples. Recall that in that situation
$(V,\imp, \prec)$ is a \ght. Moreover, there exist $a,b,c\in V$ such that
$a\prec c\prec b$ and the new vertex $x$ is connected with $c$, but not with
$a$ and $b$. Notice that this might happen only if $a$, $b$, $c$ form a
transitive triangle and either $a \imp c \leftarrow b$ or $a \leftarrow c
\imp b$.

\begin{lemma}\label{CharProp1}
Let $(V,\imp,\prec)$ be a \ght\ and $(V \cup \{x\}, \imp')$ be a pseudo-transitive
extension of $(V,\imp)$, then $N(x)=N^+(x) \cup N^-(x)$ is equivalent to
$\forall a,b,c\in V\, (a \prec c \prec b \land x-'c \Imp x-'a \lor x-'b)$.
\end{lemma}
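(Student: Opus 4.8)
The plan is to prove the two directions separately. The hard direction will be the forward one: assuming $N(x)=N^+(x)\cup N^-(x)$, derive the implication $a\prec c\prec b \land x-'c \Imp x-'a \lor x-'b$. So suppose $a\prec c\prec b$ and $x-'c$. Since $c\in N(x)$, our hypothesis gives $c\in N^+(x)$ or $c\in N^-(x)$. If $c\in N^+(x)$, then from $c\prec b$ the definition of $N^+(x)$ yields $b\in N(x)$, i.e.\ $x-'b$. If $c\in N^-(x)$, then from $a\prec c$ the definition of $N^-(x)$ yields $a\in N(x)$, i.e.\ $x-'a$. Either way the conclusion $x-'a\lor x-'b$ holds.

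For the reverse direction, assume the displayed $\forall$-statement and take any $a\in N(x)$; I must show $a\in N^+(x)\cup N^-(x)$. Suppose not. Then $a\notin N^+(x)$ gives some $b$ with $a\prec b$ and $b\notin N(x)$, and $a\notin N^-(x)$ gives some $b'$ with $b'\prec a$ and $b'\notin N(x)$. Applying the hypothesis with the chain $b'\prec a\prec b$ and the vertex $a$ in the role of the middle element (we have $x-'a$), we conclude $x-'b'$ or $x-'b$, contradicting $b,b'\notin N(x)$. Hence every $a\in N(x)$ lies in $N^+(x)\cup N^-(x)$, and since the reverse inclusion $N^+(x)\cup N^-(x)\subseteq N(x)$ is immediate from the definitions, equality holds.

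The main thing to be careful about is the bookkeeping of the roles of the vertices in the quantified formula: in the forward direction $c$ is the middle vertex of the $\prec$-chain, whereas in the reverse direction it is the neighbor $a$ that plays the middle role, with its two $\prec$-neighbors $b',b$ playing the parts of the chain's endpoints. No pseudo-transitivity of $\imp'$ beyond what is packaged in Definition~\ref{def:N(X)} is needed, and extendibility is not assumed here — this is purely a reformulation of condition (1) of Lemma~\ref{Nec}, so the argument is a short unwinding of definitions rather than a combinatorial construction.
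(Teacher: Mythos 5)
Your proof is correct and is essentially the paper's argument: the paper simply observes that $c\in N(x)\setminus(N^+(x)\cup N^-(x))$ is equivalent to the existence of $a,b\notin N(x)$ with $a\prec c\prec b$, which is exactly the contrapositive unwinding of definitions you carry out in two explicit directions. No difference in substance, only in packaging.
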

\begin{proof}
Notice that $c \in N(x) \setminus N^+(x)\cup N^-(x)$ means that there exist
$a$ and $b$ such that $a \prec c \prec b$ and $a,b \notin N(x)$. From this
observation the equivalence is immediate.
\end{proof}

Lemma \ref{CharProp1} involves all possible pseudo-transitive extensions of
$(V,\imp)$ by one vertex. It is convenient to have a characterization of the
\ght s such that $N(x) = N^+(x)\cup N^-(x)$ for every pseudo-transitive
extension, which involves only the \ght\ itself. To this end we introduce two
formulas, $\Phi$ and $\Psi$. In order to do this, we define formulas
$\varphi(a,b,c)$ and $\psi(a,b,c)$ which do not mention the reorientation
$\prec$. Notice that Lemmas \ref{Nec} and \ref{CharProp1} imply that the non
extendibility of $\prec$ may be caused by only three vertices. With this in
mind, it is not hard to understand the rationale for $\varphi(a,b,c)$,
$\psi(a,b,c)$, $\Phi$, and~$\Psi$.

\begin{definition}\label{def:Varphi}
Let $(V,\imp,\prec)$ be a \ght.
Let $\varphi(a,b,c)$ assert the existence of $e_0, \dots, e_n \in V$ such that:
\begin{enumerate}[\quad ($\varphi_1$)]
  \item $c\imp e_0$;
  \item $\forall i < n \,((a\imp e_i \land b\imp e_i \imp e_{i+1}) \lor
      (e_{i+1} \imp e_i \imp b \land e_i\imp a))$;
  \item $a \imp e_n \imp b \lor b \imp e_n \imp a$.
\end{enumerate}
Then $\Phi$ is
\[
\forall a,b,c \in V \, (a\imp c\leftarrow b \land a \prec c \prec b \Imp \varphi(a,b,c)).
\]

Symmetrically, let $\psi(a,b,c)$ assert the existence of $e_0, \dots, e_n \in V$
such that:
\begin{enumerate}[\quad ($\psi_1$)]
  \item $e_0 \imp c$;
  \item $\forall i < n \, ((a\imp e_i \land b\imp e_i \imp e_{i+1}) \lor
      (e_{i+1} \imp e_i \imp b \land e_i\imp a))$;
  \item $a \imp e_n \imp b \lor b \imp e_n \imp a$.
\end{enumerate}
Then $\Psi$ is
\[
\forall a,b,c \in V \, (a \leftarrow c \imp b \land a \prec c \prec b \Imp \psi(a,b,c)).
\]
\end{definition}

Notice that the only difference between $\varphi$ and $\psi$ occurs in
conditions ($\varphi_1$) and ($\psi_1$), where the direction of the edge is reversed. $\Phi$ and $\Psi$
further differ in applying to triples such that $a\imp c\leftarrow b$ and $a
\leftarrow c \imp b$ respectively.

\begin{remark}\label{varphi:scambio}
Let $(V,\imp,\prec)$ be a \ght. Fix $a,b,c \in V$. If $e_0, \dots, e_n$ witness
$\varphi(a,b,c)$ (or $\psi(a,b,c)$) then they witness $\varphi(b,a,c)$ (resp.
$\psi(b,a,c)$) as well.
\end{remark}

The following duality principle is useful to avoid checking $\Phi$ and $\Psi$
separately.

\begin{remark}\label{dualityPhi}
Using Remark \ref{varphi:scambio} it follows immediately that $(V, \imp,
\prec)$ satisfies $\Phi$ if and only if $(V, \leftarrow, \succ)$ (i.e.\ the
ograph and the reorientation where all edges are reversed) satisfies $\Psi$.
\end{remark}

We start with some properties concerning basic facts about $\varphi$ and
$\psi$.

\begin{property}\label{ei-x}
Let $(V,\imp)$ be a pseudo-transitive ograph. Suppose that $a \imp c
\leftarrow b$ and $\varphi(a,b,c)$ is witnessed by $e_0, \dots, e_n$. Then
there exists $k \le n$ such that $e_k, \dots, e_n$ witness $\varphi(a,b,d)$
for each $d\in V$ such that $d\mid c$ and $a-d-b$.

The same holds starting from $a\leftarrow c \imp b$ and $\psi(a,b,c)$, and
concluding that $e_k, \dots, e_n$ witness $\psi(a,b,d)$.
\end{property}
\begin{proof}
Suppose we are in the first case, i.e.\ $a \imp c \leftarrow b$ and $e_0,
\dots, e_n$ witness $\varphi(a,b,c)$. Let $k \le n$ be largest such that $c
\imp e_k$, and notice that $e_k, \dots, e_n$ witness $\varphi(a,b,c)$ as
well.

We claim that $e_i \imp c$ for all $i$ such that $k<i \le n$. The claim is
proved by a \lq backward\rq\ induction. We obtain $e_n-c$ by ($\varphi_3$)
and \pt b or \pt a. Hence $e_n \imp c$ by our assumption (unless $n=k$).
Suppose now that $e_{i+1} \imp c$. If $e_i \imp a$, then $e_i - c$ by \pt a.
Otherwise, $e_i \imp e_{i+1}$ by ($\varphi_2$) and so $e_i - c$ by
\pt{e_{i+1}}. Hence, if $i>k$ we have $e_i \imp c$.

Let now $d$ be such that $d\mid c$ and $a-d-b$. In particular we have $a \imp
d \leftarrow b$. Notice that to check that $e_k, \dots, e_n$ witness
$\varphi(a,b,d)$ conditions ($\varphi_2$) and ($\varphi_3$) are identical to
conditions ($\varphi_2$) and ($\varphi_3$) of $\varphi(a,b,c)$, since they
concern only the vertices $a$ and $b$. We are left to prove that condition
($\varphi_1$) is satisfied, namely that $d \imp e_k$. Since $d\mid c$ and $c
\imp e_k$ it suffices to show that $d - e_k$.

To this end we prove that indeed we have $e_i-d$ for all $i$ such that $k \le
i \le n$, again by a \lq backward\rq\ induction. Since $a \imp d \leftarrow
b$ and either $e_n \imp a$ or $e_n \imp b$ by ($\varphi_3$), we have $e_n-d$
by either \pt a or \pt b. Now, assuming $i \ge k$ and $e_{i+1}-d$ so that
$d-e_{i+1}-c$, we must have $e_{i+1} \imp d$ because $e_{i+1} \imp c$ by the
choice of $k$. If $a \imp e_i$ condition ($\varphi_2$) of $\varphi(a,b,c)$
implies $e_i\imp e_{i+1}$ and hence $e_i-d$ by \pt{e_{i+1}}. If $e_i\imp a$,
then $e_i-d$ by \pt a, since $a \imp d$.

If $a\leftarrow c \imp b$ and $e_0, \dots, e_n$ witness $\psi(a,b,c)$ the
argument is similar with obvious changes.
\end{proof}

\begin{property} \label{Incom}
Let $(V,\imp)$ be a pseudo-transitive ograph and let $v,u,e_0, \dots,  e_n
\in V$. Suppose $u \mid v$, $u-e_0$ and $\forall i< n \, (v\imp e_i \imp
e_{i+1} \lor e_{i+1} \imp e_i \imp v)$. Then $u-e_i$ for each $i \le n$.
\end{property}
\begin{proof}
The proof is by induction on $i$. The base case holds by assumption, so
assume $u-e_i$ for $i<n$. If $u \imp e_i$, then $v\imp e_i$ because $u \mid
v$. Thus $e_i \imp e_{i+1}$ and $u-e_{i+1}$ by \pt{e_i}. If $e_i \imp u$ the
argument is symmetric inverting the arrows.
\end{proof}

%
%

We can now show that $\Phi$ and $\Psi$ are sufficient for the first condition
of Lemma \ref{Nec}.

\begin{lemma}\label{characterization}
Let $(V,\imp, \prec)$ be a \ght. If $\Phi$ and $\Psi$ are satisfied, then for each
$(V\cup\{x\}, \imp')$ pseudo-transitive extension of $(V,\imp)$ we have
$N(x)=N^+(x)\cup N^-(x)$.
\end{lemma}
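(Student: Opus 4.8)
The plan is to start from Lemma \ref{CharProp1}, which reduces the statement to proving that $\forall a,b,c\in V\,(a \prec c \prec b \land x-'c \Imp x-'a \lor x-'b)$. So I fix $a,b,c\in V$ with $a\prec c\prec b$ and $x-'c$, and argue by contradiction, assuming $x\mid' a$ and $x\mid' b$. From $a\prec c$ and $c\prec b$ we get $a-c$ and $c-b$, so I split into four cases according to the $\imp$-orientations of the edges $ac$ and $cb$.

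If $c$ lies \lq in the middle\rq, say $a\imp c\imp b$, the contradiction is immediate from pseudo-transitivity of $\imp'$: $x\imp' c$ gives $x-'b$ by \pt{c}, while $c\imp' x$ gives $x-'a$ by \pt{c}; the case $b\imp c\imp a$ is symmetric. The case $a\leftarrow c\imp b$ will be handled by the duality principle of Remark \ref{dualityPhi}: since $(V,\imp,\prec)$ satisfies $\Psi$, the \ght\ $(V,\leftarrow,\succ)$ satisfies $\Phi$, and applying the case $a\imp c\leftarrow b$ below to this \ght\ and to the pseudo-transitive extension obtained by reversing all edges of $\imp'$, then renaming $a\leftrightarrow b$, yields exactly the required implication. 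So the substantive case is $a\imp c\leftarrow b$.

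Assume then $a\imp c\leftarrow b$. If $c\imp' x$, then $a\imp' c\imp' x$ gives $x-'a$ by \pt{c}, a contradiction; so $x\imp' c$. By $\Phi$ there are witnesses $e_0,\dots,e_n$ for $\varphi(a,b,c)$. From ($\varphi_1$) we have $c\imp e_0$, so $x\imp' c\imp' e_0$ gives $x-'e_0$ by \pt{c}. Each of the two disjuncts of ($\varphi_2$) entails $b\imp e_i\imp e_{i+1}$ or $e_{i+1}\imp e_i\imp b$, so Property \ref{Incom}, applied in the pseudo-transitive ograph $(V\cup\{x\},\imp')$ with $u=x$ and $v=b$ (using $x\mid' b$ and $x-'e_0$), gives $x-'e_i$ for every $i\le n$, in particular $x-'e_n$. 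By ($\varphi_3$), either $a\imp e_n\imp b$ or $b\imp e_n\imp a$; in both cases $e_n$ is $\imp$-connected to $a$ and to $b$ with opposite directions, so combining with $x-'e_n$ and pseudo-transitivity of $\imp'$ forces $x-'a$ or $x-'b$ (whichever direction the edge between $x$ and $e_n$ has, one of the two follows). This contradicts the assumption and finishes the proof.

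The main obstacle is purely the bookkeeping of which of the four cases genuinely need the formulas $\Phi$ and $\Psi$ and which follow from bare pseudo-transitivity of $\imp'$; once that is sorted out, the only real work is the propagation of $\imp'$-comparability with $x$ along the chain $e_0,\dots,e_n$, which is precisely what Property \ref{Incom} was designed to provide, so no new difficulty arises.
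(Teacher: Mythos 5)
Your proof is correct and follows essentially the same route as the paper's: reduce via Lemma \ref{CharProp1}, dispose of the $a\imp c\imp b$ and $b\imp c\imp a$ cases by bare pseudo-transitivity, and in the case $a\imp c\leftarrow b$ use $\Phi$ to get witnesses of $\varphi(a,b,c)$ and propagate comparability with $x$ along $e_0,\dots,e_n$ via Property \ref{Incom}, concluding with \pt{e_n}. The only cosmetic difference is that you handle $a\leftarrow c\imp b$ by the duality of Remark \ref{dualityPhi} (with the $a\leftrightarrow b$ swap, which you correctly note), where the paper just repeats the symmetric argument using $\Psi$.
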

\begin{proof}
Fix $(V\cup\{x\}, \imp')$. By Lemma \ref{CharProp1} it suffices to show that
for any $a, b, c \in V$ such that $a \prec c \prec b$ and $x-'c$ either
$x-'a$ or $x-'b$.

If $b \imp c \imp a$ then $x \imp' c$ implies $x-'a$, while $c \imp' x$
implies $x-'b$. If $a \imp c \imp b$ the situation is similar.

If $a \imp c \leftarrow b$ then $\Phi$ implies that $\phi(a,b,c)$ holds. Let
$e_0, \dots, e_n$ witness $\varphi(a,b,c)$. Assume $x-'c$. If $c \imp' x$,
then both $a-'x$ and $b-'x$ follow immediately by \pt c. Otherwise we have $x
\imp' c$, and suppose towards a contradiction that $x\mid 'a$ and $x\mid 'b$.
Notice that $x \imp' c \imp e_0$ implies $x-'e_0$. Hence by condition ($\varphi_2$) and
Property \ref{Incom} it holds that $\forall i\le n \,(x-'e_i)$. In particular
we have $x-'e_n$, and then one of $x-'a$ and $x-'b$ by $\pt{e_n}$ follows by
($\varphi_3$).

If $a \leftarrow c \imp b$ we argue similarly, using $\Psi$.
\end{proof}

We now prove that $\Phi$ and $\Psi$ are necessary conditions for
$N(x)=N^+(x)\cup N^-(x)$.

\begin{lemma} \label{sufTrian}
Let $(V,\imp, \prec)$ be a \ght\ such that one of $\Phi$ and $\Psi$ fails. Then there is a
pseudo-transitive extension $(V\cup \{x\},\imp')$ of $(V,\imp)$ such that
$N(x)\ne N^+(x)\cup N^-(x)$ and hence $(V,\imp, \prec)$ is not extendible by Lemma
\ref{Nec}.
\end{lemma}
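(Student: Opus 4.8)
The plan is to reduce first, via the duality of Remark~\ref{dualityPhi}, to the case where $\Phi$ fails. Indeed $\Psi$ fails for $(V,\imp,\prec)$ precisely when $\Phi$ fails for the reversed triple $(V,\leftarrow,\succ)$, and the property $N(x)=N^+(x)\cup N^-(x)$ is unchanged when all edges (including those incident to $x$) are reversed — reversal keeps $N(x)$ and merely interchanges $N^+(x)$ and $N^-(x)$ — so an extension witnessing its failure for $(V,\leftarrow,\succ)$ dualizes back to one for $(V,\imp,\prec)$. So fix $a,b,c\in V$ with $a\imp c\leftarrow b$, $a\prec c\prec b$, and $\neg\varphi(a,b,c)$. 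Call $v\in V$ \emph{high} if $a\imp v$ and $b\imp v$, \emph{low} if $v\imp a$ and $v\imp b$, and \emph{middle} if $a\imp v\imp b$ or $b\imp v\imp a$; note that $c$ is high and that neither $a$ nor $b$ is high or low.

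Next I would build the extension $(V\cup\{x\},\imp')$ by putting $x\imp' c$ and closing off under the edges that pseudo-transitivity forces, while maintaining the invariant that every $\imp'$-neighbour of $x$ is either high with $x\imp' v$, or low with $v\imp' x$. Concretely, let $S^+$ and $S^-$ be defined by the simultaneous recursion (equivalently, by reachability along suitable chains from $c$): $c\in S^+$; whenever $v\in S^+$ and $v\imp w$, or $v\in S^-$ and $w\imp v$, put $w$ into $S^+$ if $w$ is high and into $S^-$ if $w$ is low. Set $x\imp' v$ for $v\in S^+$ and $v\imp' x$ for $v\in S^-$, leave $x$ incomparable to the remaining vertices of $V$, and keep $\imp'$ equal to $\imp$ on $V$. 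It is routine that $\imp'$ is asymmetric and irreflexive (note $S^+\cap S^-=\emptyset$, as no vertex is both high and low) and that $(V\cup\{x\},\imp')$ extends $(V,\imp)$.

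Inspecting the pseudo-transitivity triples of $(V\cup\{x\},\imp')$ that involve $x$, one checks that $(V\cup\{x\},\imp')$ is pseudo-transitive if and only if: (i) any out-neighbour of $x$ is $\imp$-comparable with any in-neighbour of $x$; and (ii) every $\imp$-successor of an out-neighbour of $x$, and every $\imp$-predecessor of an in-neighbour of $x$, is again a neighbour of $x$. Condition (i) is immediate: if $v\in S^-$ and $w\in S^+$ then $v\imp a\imp w$, so $v-w$ by \pt a. Condition (ii) is the crux and \textbf{the main obstacle}, and it is exactly where $\neg\varphi(a,b,c)$ is used. Take $v\in S^+$ with $v\imp w$ (the case $v\in S^-$, $w\imp v$ is symmetric). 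Since $v$ is high or equals $c$, pseudo-transitivity applied to $a\imp v\imp w$ and $b\imp v\imp w$ gives $a-w$ and $b-w$, so $w$ is high, low, or middle; in the first two cases the recursion places $w$ into $S^+$ or $S^-$, so $w\in N(x)$ as needed, and it remains to exclude that $w$ is middle. Suppose it were. Take the chain $c=f_0,f_1,\dots,f_k=v$ witnessing $v\in S^+$ — built step by step by passing to an $\imp$-successor of the current vertex when it is high (or is $c$) and to an $\imp$-predecessor when it is low — append $w$, and discard $f_0=c$: the resulting sequence $e_0=f_1,\dots,e_{k-1}=f_k=v,e_k=w$ witnesses $\varphi(a,b,c)$. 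Indeed ($\varphi_1$) holds since $f_1$ is an $\imp$-successor of $c$; ($\varphi_2$) holds step by step because each $f_j$ with $j<k$ is high with an edge to its successor in the chain (first disjunct) or low with an edge from it (second disjunct), and the step $v\to w$ is of the same form; and ($\varphi_3$) holds because $w$ is middle. This contradicts $\neg\varphi(a,b,c)$, so $w$ is not middle, (ii) holds, and $(V\cup\{x\},\imp')$ is pseudo-transitive.

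Finally I would observe that, since neither $a$ nor $b$ is high or low, neither ever enters $S^+\cup S^-$, so $a\mid' x$ and $b\mid' x$; as $c\in N(x)$ (via $x\imp' c$) while $a\prec c\prec b$ with $a,b\notin N(x)$, we get $c\in N(x)\setminus(N^+(x)\cup N^-(x))$, hence $N(x)\neq N^+(x)\cup N^-(x)$, and Lemma~\ref{Nec} yields that $(V,\imp,\prec)$ is not extendible. The only genuinely delicate point is the verification of (ii): one must set up the bookkeeping of the recursion — the alternation between successors and predecessors, tracked by the high/low classification — so that it matches the clauses ($\varphi_1$)--($\varphi_3$) exactly, ensuring that a ``middle'' vertex being demanded by the closure really does produce a witness for $\varphi(a,b,c)$.
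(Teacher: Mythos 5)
Your proposal is correct and is essentially the paper's own proof: the same one-vertex extension (add $x$ with $x\imp' c$ and close under the edges forced by pseudo-transitivity, sending \lq high\rq\ vertices above $x$ and \lq low\rq\ ones below), the same reduction of $\Psi$ to $\Phi$ by duality, and the same use of $\neg\varphi(a,b,c)$ to rule out a \lq middle\rq\ vertex by turning the closure chain into a witness of $\varphi(a,b,c)$. The only difference is bookkeeping: you organize the closure as reachability chains starting at $c$ (which gives ($\varphi_1$) for free), whereas the paper defines the extension in stages and traces the stages backwards, using its Claims \ref{Fact1} and \ref{Fact2} to secure the same witnessing sequence.
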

\begin{proof}
We assume the failure of $\Phi$: if $\Psi$ fails the argument is symmetric.

Let $a,b,c \in V$ be such that $a \imp c \leftarrow b$, $a \prec c \prec b $
and $\neg\varphi(a,b,c)$. We fix $x \notin V$ and define an extension $\imp'$
of $(V,\imp)$ to $V \cup \{x\}$ in stages, as an increasing union ${\imp'} =
{\bigcup_{n\in \nat} \imp_n}$. For each stage $n$, $\imp_n$ is defined as
follows:
\begin{itemize}
  \item $\imp_0$ extends $\imp$ by adding the single edge $x \imp_0 c$;
  \item $\imp_{n+1}$ extends $\imp_n$ by adding edges
\[\begin{cases}
x \imp_{n+1} u & \text{ if } \exists v ((x\imp_n v \imp u ) \lor (u \imp v \imp_n x))
 \text{ and } a \imp u \leftarrow b; \\
u \imp_{n+1} x & \text{ if } \exists v ((x\imp_n v \imp u ) \lor (u \imp v \imp_n x))
 \text{ and } a \leftarrow u \imp b. \\
\end{cases}\]
\end{itemize}
Notice that $x-'c$ but $x\mid 'a$ and $x\mid 'b$ and hence $c \in N(x)$ but
$c \notin N^+(x)\cup N^-(x)$. Therefore to complete the proof it suffices to
check the pseudo-transitivity of $\imp'$. We first make a couple of
preliminary observations.

\begin{claim}\label{Fact1}
For all $u \in V$ such that there exists $v \in V$ satisfying either $x \imp'
v \imp u$ or $u \imp v \imp' x$ we have $a-u-b$.
\end{claim}
\begin{prf}
Let us first suppose that $x \imp' v \imp u$ holds. By definition of $\imp'$
(or by hypothesis when $v=c$) it holds that $a\imp v\leftarrow b$. Hence
$a-u-b$ by \pt v. If $u \imp v \imp' x$ the argument is similar.
\end{prf}

\begin{claim}\label{Fact2}
If $u \in V$ is such that $u \neq c$ and $u -_1 x$ then $c \imp u$.
\end{claim}
\begin{prf}
Let us suppose that $u\neq c$ and $u -_1 x$, so that $u -_0 x$ does not hold.
The definition of $\imp_1$ implies that for some $v$ we have either $x\imp_0
v \imp u$ or $u \imp v \imp_0 x$. Since the only $v$ such that $v -_0 x$ is
$c$ and $x \imp_0 c$ we must have the first possibility with $v=c$, so that
$c \imp u$ holds.
\end{prf}

In order to show that $\imp'$ is pseudo-transitive, we have to consider the
following three cases for $v,u \in V$:
\begin{enumerate}[a.]
\item $v\imp' x\imp' u$. Then $v-u$ because $v\imp a \imp u$ by definition
    of  $\imp'$;
\item $x\imp' v \imp u$. Then Claim \ref{Fact1} guarantees that $a-u-b$.
    Let $n$ be the least stage such that $x \imp_n v$. If $a \imp u
    \leftarrow b$ or $a\leftarrow u \imp b$, then $x-_{n+1} u$ by
    definition of $\imp_{n+1}$. Thus we assume that either $a\imp u\imp b$
    or $b\imp u\imp a$. Since $n$ is the minimum stage such that $x\imp_n
    v$, there exists $e_{n-2}$ such that $x-_{n-1} e_{n-2}-v$ and
    $x\imp_{n-1} e_{n-2} \Biimp e_{n-2}\imp v$. Notice that $x -_{n-2}
    e_{n-2}$ does not hold, otherwise we would have $x \imp_{n-1} v$.
    Analogously, there must be an $e_{n-3}$ such that $x-_{n-2} e_{n-3}-
    e_{n-2}$ and $x\imp_{n-2} e_{n-3} \Biimp e_{n-3}\imp e_{n-2}$. For each
    step $i<n$, we can repeat this search of $e_{i-2}$ witnessing that
    $x-_i e_{i-1}$. After $n-1$ steps we get to $x-_1 e_0$ and, since $x
    -_0 e_0$ does not hold, $e_0 \neq c$. This means, by Claim \ref{Fact2},
    that $c\imp e_0$. Let $j$ be maximum such that $c \imp e_j$ and set
    $e_{n-1}=v$ and $e_n=u$. We claim that $e_j, \dots, e_n$ witness
    $\varphi(a,b,c)$. To this end we need to check the three clauses in the
    definition of $\varphi(a,b,c)$:
\begin{enumerate}[($\varphi_1$)]
\item $c\imp e_j$ by hypothesis.
\item Fix $i<n$: $e_i-e_{i+1}$ holds by our choice of the sequence of the
    $e_i$'s and we have either $a\imp e_i \leftarrow b$ or $a \leftarrow
    e_i \imp b$ by definition of $\imp_i$. Moreover, if $x \imp_{i+1}
    e_i$, then $b \imp e_i$, by definition of $\imp_{i+1}$, and $e_i \imp
    e_{i+1}$, by choice of $e_i$. If $e_i \imp_{i+1} x$ the argument is
    similar.
\item $a\imp e_n \imp b$ or $b\imp e_n \imp a$ by hypothesis.
\end{enumerate}
  \item $u \imp v \imp' x$. This is similar to the previous case.\qedhere
\end{enumerate}
\end{proof}

Summarizing the results obtained in Lemma \ref{characterization} and Lemma
\ref{sufTrian} we obtain:

\begin{corollary}\label{N(x)}
Let $(V,\imp, \prec)$ be a \ght. The following are equivalent:
\begin{enumerate}
\item for each pseudo-transitive extension $(V\cup \{x\}, \imp')$ of $(V,\imp)$,
it holds that $ N(x)=N^+(x)\cup N^-(x)$;
\item  $\Phi$ and $\Psi$ are satisfied.
\end{enumerate}
\end{corollary}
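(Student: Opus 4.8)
The plan is to prove Corollary~\ref{N(x)} simply by combining the two preceding lemmas, since each supplies one direction of the equivalence. For the implication (2) $\Rightarrow$ (1), I would invoke Lemma~\ref{characterization} directly: it states that if $\Phi$ and $\Psi$ hold for the \ght\ $(V,\imp,\prec)$, then for every pseudo-transitive extension $(V\cup\{x\},\imp')$ we have $N(x)=N^+(x)\cup N^-(x)$, which is exactly (1). Nothing further is needed here.

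For the implication (1) $\Rightarrow$ (2), I would argue by contraposition using Lemma~\ref{sufTrian}. Suppose (2) fails, i.e.\ at least one of $\Phi$ and $\Psi$ fails for $(V,\imp,\prec)$. Then Lemma~\ref{sufTrian} produces a specific pseudo-transitive extension $(V\cup\{x\},\imp')$ with $N(x)\ne N^+(x)\cup N^-(x)$, so (1) fails as well. Hence (1) implies (2).

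Since both directions reduce to a direct citation of an already-proved lemma, there is no genuine obstacle here; the only thing to be careful about is bookkeeping — making sure the quantifier structure of (1) ("for each pseudo-transitive extension") matches the universally quantified conclusion of Lemma~\ref{characterization} and the existential conclusion of Lemma~\ref{sufTrian} (one bad extension suffices to refute (1)). Thus the proof is essentially a one-line assembly: ``(2)$\Rightarrow$(1) is Lemma~\ref{characterization}, and (1)$\Rightarrow$(2) is the contrapositive of Lemma~\ref{sufTrian}.''
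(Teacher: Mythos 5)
Your proposal is correct and matches the paper exactly: the corollary is stated there as a summary of Lemma~\ref{characterization} (giving (2)~$\Rightarrow$~(1)) and Lemma~\ref{sufTrian} (giving (1)~$\Rightarrow$~(2) by contraposition), with no further argument needed. Your attention to the quantifier bookkeeping is exactly the right (and only) point to check.
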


\section{Avoiding the $2 \oplus 2$ example}\label{secReor:second}

This section is devoted to study more carefully the second condition of Lemma
\ref{Nec}. The next lemma shows that, assuming that $N(x)=N^+(x)\cup N^-(x)$,
this condition captures precisely the lack of the $2 \oplus 2$ example.
Recall that in that example $(V,\imp, \prec)$ is a \ght\ and there exist
$a,b,c,d\in V$ such that $a\prec c$, $d \prec b$, $a\mid b$, $a\mid d$,
$c\mid b$, and $c\mid d$. Then, a new vertex $x$ is connected with $a$ and
$b$ but not with $c$ and $d$, or vice versa. Notice that this is possible
only if either $a \imp c$ and $b \imp d$, or $c \imp a$ and $d \imp b$.

\begin{lemma}\label{CharProp2}
Let $(V,\imp, \prec)$ be a \ght\ and $(V\cup\{x\}, \imp')$ a pseudo-transitive extension of
$(V,\imp)$. We use $\Lambda$ to denote the following property of
$(V\cup\{x\}, \imp')$ and $\prec$:
\[
\forall a,b,c,d\in V \,(a\mid b \land c\mid d \land a \prec c \land d \prec b \land x-'a \land x-'b
\Imp x-'d \lor x-'c)
\]
Then:
\begin{enumerate}[(1)]
\item if $\Lambda$ holds then $N^-(x)\setminus N^+(x) \prec N^+(x)
    \setminus N^-(x)$;
\item\label{CharProp22} if $N(x)=N^+(x)\cup N^-(x)$ and $N^-(x) \setminus N^+(x) \prec N^+(x)
    \setminus N^-(x)$ then $\Lambda$ holds.
\end{enumerate}
\end{lemma}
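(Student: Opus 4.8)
The plan is to prove the two implications separately, handling (2) first since it is the easy direction: once the definitions of $N^+(x)$ and $N^-(x)$ are unwound it is a two-line argument, whereas (1) is where the real work lies.

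For (2), assume $N(x)=N^+(x)\cup N^-(x)$ and $N^-(x)\setminus N^+(x)\prec N^+(x)\setminus N^-(x)$, and take $a,b,c,d\in V$ witnessing the premise of $\Lambda$; suppose towards a contradiction that $x\mid 'c$ and $x\mid 'd$. Then $a,b\in N(x)$ while $c,d\notin N(x)$. Since $a\prec c$ with $c\notin N(x)$, the definition of $N^+(x)$ gives $a\notin N^+(x)$, so $a\in N^-(x)\setminus N^+(x)$; symmetrically, $d\prec b$ with $d\notin N(x)$ forces $b\in N^+(x)\setminus N^-(x)$. The second hypothesis then yields $a\prec b$, contradicting $a\mid b$.

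For (1), assume $\Lambda$ and fix $a\in N^-(x)\setminus N^+(x)$ and $b\in N^+(x)\setminus N^-(x)$; I want $a\prec b$. By the Remark following Definition~\ref{def:N(X)} we are done if $a-b$, so assume $a\mid b$. Since $a\in N(x)\setminus N^+(x)$ there is $c$ with $a\prec c$ and $c\notin N(x)$, and since $b\in N(x)\setminus N^-(x)$ there is $d$ with $d\prec b$ and $d\notin N(x)$. If $c\prec d$ then $a\prec c\prec d\prec b$ contradicts $a\mid b$; if $c\mid d$ then $\Lambda$ applied to $(a,b,c,d)$ gives $x-'c$ or $x-'d$, again a contradiction; so $d\prec c$. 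Using $a\in N^-(x)$, $b\in N^+(x)$ and $a\mid b$ once more one checks $a\mid d$ (otherwise $d\prec a$ would put $d$ in $N(x)$, while $a\prec d\prec b$ would contradict $a\mid b$) and, dually, $c\mid b$.

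This leaves the configuration $a\mid b$, $a\mid d$, $c\mid b$, $d\prec c$ with $a\prec c$, $d\prec b$, $a,b\in N(x)$ and $c,d\notin N(x)$, and here $\Lambda$ has no fresh vertices to act on — this is the main obstacle, and I expect to clear it by bringing pseudo-transitivity of $\imp'$ (and of its restriction $\imp$) into play. Since $x\mid 'c$, pseudo-transitivity of $\imp'$ forbids any length-two $\imp'$-path between $x$ and $c$, which pins down the $\imp'$-edge between $x$ and $a$ so that $a\imp c\Rightarrow a\imp' x$ and $c\imp a\Rightarrow x\imp' a$; dually $x\mid 'd$ makes the $x$–$b$ edge track the $d$–$b$ edge, i.e.\ $b\imp d\Leftrightarrow b\imp' x$. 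Pseudo-transitivity of $\imp$ on the triangle $\{a,c,d\}$ (using $a\mid d$) forces $a\imp c\Leftrightarrow d\imp c$, and on $\{b,c,d\}$ (using $c\mid b$) forces $c\imp d\Leftrightarrow b\imp d$; chaining these equivalences gives $a\imp c\Leftrightarrow d\imp b$, hence $a\imp' x\Leftrightarrow x\imp' b$. Whichever way this edge points we obtain $a\imp' x\imp' b$ or $b\imp' x\imp' a$, and pseudo-transitivity of $\imp'$ then yields $a-'b$, that is $a-b$, contradicting $a\mid b$. Therefore $a\prec b$, which proves~(1).
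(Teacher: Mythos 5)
Your proposal is correct. Part (2) is exactly the paper's argument, just phrased directly instead of contrapositively. For part (1) the skeleton matches the paper's (which argues by contraposition): choose $c \succ a$ with $c \mid' x$ and $d \prec b$ with $d \mid' x$, reduce to the case $a \mid b$ via the Remark following Definition \ref{def:N(X)}, and then the crux is what happens when $c$ and $d$ are comparable. There your mechanism genuinely differs from the paper's. The paper fixes (WLOG) $a \imp' x \leftarrow' b$, deduces $a \imp c$ and $b \imp d$, and rules out each orientation of a $c$--$d$ edge with a single \pt{c} or \pt{d} step followed by a $\prec$-deduction contradicting $a \in N^-(x)$ or $b \in N^+(x)$; the upshot is $c \mid d$, so the witnesses violate $\Lambda$. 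You instead first establish $a \mid d$ and $c \mid b$ (the same incomparabilities recorded in Observation \ref{c mid b}) and then run a parity argument: pseudo-transitivity around the triangles $\{a,c,d\}$ and $\{b,c,d\}$ and around $x$ forces $a \imp' x \Biimp a \imp c \Biimp d \imp b \Biimp x \imp' b$, so either $a \imp' x \imp' b$ or $b \imp' x \imp' a$, giving $a-b$ and a contradiction. Every step checks out. The trade-off: the paper's treatment of the comparable case is a bit shorter, while yours avoids both the case split on the orientation of the $x$-edges and the need to pin down $a \imp c$, $b \imp d$, and it uses $\Lambda$ exactly where intended, in the $c \mid d$ case, with $c \prec d$ dispatched by transitivity alone.
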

\begin{proof}
$(1)$ Assume $N^-(x)\setminus N^+(x) \nprec N^+(x)\setminus N^-(x)$, i.e.\
there exist $a\in N^-(x)\setminus N^+(x)$ and $b\in N^+(x)\setminus N^-(x)$
such that $a\nprec b$. Since $a \notin N^+(x)$ there is $c \succ a$ with
$c\mid x$. Since $b \notin N^-(x)$ there is $d \prec b $ with $d\mid x$. If
$b \prec a$, then $b \prec c$ but this is impossible since $c\mid x$ and
$b\in N^+(x)$. Since we are assuming $a\nprec b$ we have $a\mid b$.

We claim that $c\mid d$ also holds. Since $a-x-b$, but $a\mid b$, then either
$a \imp x \leftarrow b$ or $a \leftarrow x \imp b$. The argument for the two
cases is similar, so let us assume that $a \imp x \leftarrow b$. This implies
$a\imp c$ and $b\imp d$ because $c\mid x$ and $x\mid d$. Hence if $c\imp d$,
then $a-d$ by \pt c. Since $a\mid b$ and $d \prec b$, it must be $d \prec a$
but this contradicts $a\in N^-(x)$. If $d\imp c$, then $c-b$ by \pt d. Since
$a\mid b$ and $a \prec c$, it must be $b \prec c$  which contradicts $b\in
N^+(x)$. We have thus shown that $c\mid d$ as claimed.

Now $a$, $b$, $c$ and $d$ witness the failure of $\Lambda$.\smallskip

$(2)$ Assume that $a,b,c,d\in V$ witness the failure of $\Lambda$. Then $a
\notin N^+(x)$, $b \notin N^-(x)$ and $a\nprec b$. If $N(x)=N^+(x)\cup
N^-(x)$ holds then $a \in N^-(x)$ and $b \in N^+(x)$, showing that $N^-(x)
\setminus N^+(x) \prec N^+(x) \setminus N^-(x)$ fails.
\end{proof}

\begin{obs}\label{c mid b}
Notice that the first four conjuncts of the antecedent of the implication
appearing in $\Lambda$ imply that $a$, $b$, $c$ and $d$ form a $2 \oplus 2$
because $c\mid b$ and $a\mid d$ follow from these. In fact, if $c-b$, then $a
\prec c$ and $a\mid b$ imply that $b \prec c$, but then $d-c$ contrary to the
assumption. A similar argument shows that $a\mid d$.
\end{obs}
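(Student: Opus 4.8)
The plan is to reduce the statement to two facts recorded earlier: $\prec$ is transitive, and, since $\prec$ is a reorientation of $\imp$, for all $u,v\in V$ one has $u-v$ if and only if $u\prec v$ or $v\prec u$ (so in particular $\prec$ is irreflexive and $\imp$-comparability coincides with $\prec$-comparability). Granting these, I would derive the two missing non-edges $c\mid b$ and $a\mid d$ by short case splits, and then observe that, together with the edges $a-c$ (from $a\prec c$) and $b-d$ (from $d\prec b$) and the hypothesised non-edges $a\mid b$ and $c\mid d$, this is precisely the $2\oplus 2$ pattern on $\{a,b,c,d\}$.

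First I would prove $c\mid b$. Suppose toward a contradiction that $c-b$; then $b$ and $c$ are $\prec$-comparable. If $c\prec b$, then $a\prec c\prec b$ and transitivity of $\prec$ give $a\prec b$, hence $a-b$, contradicting $a\mid b$. Hence $b\prec c$; but then $d\prec b\prec c$ and transitivity give $d\prec c$, hence $d-c$, contradicting $c\mid d$. Both branches are impossible, so $c\mid b$.

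The argument for $a\mid d$ is the mirror image: if $a-d$, then $a$ and $d$ are $\prec$-comparable; $a\prec d$ together with $d\prec b$ would give $a\prec b$, hence $a-b$, against $a\mid b$, so $d\prec a$; but then $d\prec a\prec c$ gives $d\prec c$, against $c\mid d$.

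I do not expect any genuine obstacle: the whole thing is bookkeeping with transitivity. The only points to watch are to invoke, at each step, the equivalence of $\imp$-comparability and $\prec$-comparability (noted right after the definition of a \ght) and the irreflexivity of $\prec$; the latter guarantees that a derived relation of the form $a\prec b$ is contradictory even in the degenerate cases where some of $a,b,c,d$ coincide, so that in those cases the antecedent is in fact unsatisfiable and the statement holds vacuously.
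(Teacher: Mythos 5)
Your proof is correct and is essentially the paper's own argument: assume $c-b$, rule out $c\prec b$ via transitivity and $a\mid b$, then rule out $b\prec c$ via transitivity and $c\mid d$, and argue symmetrically for $a\mid d$; you merely spell out the case split and the comparability-preservation of reorientations that the paper leaves implicit.
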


We now define two formulas $\Theta$ and $\Sigma$ characterizing the
reorientations such that the condition $\Lambda$ of Lemma \ref{CharProp2} is
satisfied whenever $N(x)=N^+(x)\cup N^-(x)$. As for $\Phi$ and $\Psi$, the
main feature of $\Theta$ and $\Sigma$ is that they mention only $(V,\imp)$
and $\prec$. In order to define $\Theta$ and $\Sigma$ it is necessary to
define $\theta(a,b,c,d)$ and $\sigma(a,b,c,d)$ (which do not mention
$\prec$).

\begin{definition}\label{def:Theta}
Let $(V,\imp, \prec)$ be a \ght. Let $\theta(a,b,c,d)$ assert the existence of $e_0, \dots, e_n \in V$
such that:
\begin{enumerate}[\quad($\theta_1$)]
\item $e_0 \imp b$;
\item $\forall i< n \,(e_{i+1}\imp e_i \imp d)$;
\item $d \imp e_n$;
\item $e_n\mid a$.
\end{enumerate}\smallskip
Then $\Theta$ is
\[
\forall a,b,c,d \in V \, (a\imp c \land b\imp d \land a\mid b \land c \mid d \land a \prec c \land d \prec b \Imp \theta(a,b,c,d) \lor \theta(b,a,d,c)).
\]
Symmetrically, let $\sigma(a,b,c,d)$ assert the existence of $e_0, \dots, e_n \in V$ such that:
\begin{enumerate}[\quad($\sigma_1$)]
\item $d \imp e_0$;
\item $\forall i< n \, (b \imp e_i \imp e_{i+1})$;
\item $e_n \imp b$;
\item $e_n\mid c$.
\end{enumerate}\smallskip
Then $\Sigma$ is
\[
\forall a,b,c,d \in V \, (a\imp c \land b\imp d \land a\mid b \land c \mid d \land a \prec c \land d \prec b \Imp \sigma(a,b,c,d) \lor \sigma(b,a,d,c)).
\]
\end{definition}

\begin{ex}
Suppose $(\{a,b,c,d,e\}, \imp)$ is the pseudo-transitive graph whose only
edges are $a\imp c$, $b\imp d$ and $d \imp e \imp b$. Then $\theta(a,b,c,d)$
and $\sigma(a,b,c,d)$ hold with $n=0$ and $e_0=e$. Thus a $2 \oplus 2$ such
as the one obtained restricting $\imp$ to $\{a,b,c,d\}$ can satisfy $\theta$
and $\sigma$ simply because one of its edges belongs to a non transitive
triangle. See the first paragraph of the proof of lemma
\ref{PsiPhiThetaSigma} below for more on this.
\end{ex}

\begin{remark}\label{rem:thetaimp}
Let $(V,\imp, \prec)$ be a \ght. Suppose $e_0, \dots, e_n$ witness $\theta(a,b,c,d)$ for some $a,b,c,d
\in V$. Clearly, if there is an $i> 0$ such that $e_i \imp b$, then $e_i,
\dots, e_n$  witness $\theta(a,b,c,d)$ as well. Thus we can assume that for
every $i\le n$ with $i>0$ we have $b \imp e_i$ whenever $b-e_i$. Under this
assumption it actually holds that $b \imp e_i$ holds for every $i\le n$ with
$i>0$. In fact, $b-e_n$ by \pt d and if $b \imp e_{i+1}$, then $b-e_i$ by
\pt{e_{i+1}}.
\end{remark}

Before proving the usefulness of $\Theta$ and $\Sigma$, we would like to
comment on their mutual relationship and on the difference between the
connection between $\Theta$ and $\Sigma$ and the connection between $\Phi$
and $\Psi$. Let $(V,\imp, \prec)$ be a \ght\ and suppose $a,b,c,d \in V$
satisfy the antecedent of $\Theta$ and $\Sigma$ (which is the same). Consider
a pseudo-transitive extension $(V\cup\{x,y\}, \imp')$ such that $a\imp' x
\leftarrow' b$ and $c\leftarrow' y \imp' d$. The two extensions correspond
respectively to the left and right ograph of Figure \ref{fig:trans}. As
explained at the beginning of this section, if either $x$ is incomparable
with both $c$ and $d$ or if $y$ is incomparable with both $a$ and $b$, then
$(V,\imp, \prec)$ is not extendible. We emphasize that under these hypotheses we could
have both $x$ and $y$ witnessing the non extendibility of $(V,\imp, \prec)$. To compare
this situation with the one $\Phi$ and $\Psi$ take care of, suppose $a \imp b
\imp c \leftarrow a$ and add $x$ and $y$ such that $a \imp x$ and $y \imp c$.
Since $c\prec a \prec b$ and $a \prec c \prec b $ cannot occur
simultaneously, only one of $x$ and $y$ can witness (if $\varphi(a,b,c)$,
resp.\ $\psi(b,c,a)$, fails) the non extendibility of $(V,\imp, \prec)$.

Despite the previous considerations the next lemma shows that $x$ witnesses
the non extendibility of $(V,\imp, \prec)$ if and only if $y$ does.

\begin{lemma} \label{SoloTheta}
Let $(V, \imp)$ be a pseudo-transitive ograph and suppose $a,b,c,d \in V$
are such that $a\imp c$, $b\imp d$, $a\mid b$ and $d\mid c$. Then
$\theta(a,b,c,d)$ holds if and only if $\sigma(a,b,c,d)$ does.

Therefore, if $(V,\imp, \prec)$ is a \ght\ then
$\Theta$ holds if and only if $\Sigma$ does.
\end{lemma}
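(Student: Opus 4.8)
The plan is to prove the ``local'' equivalence between $\theta(a,b,c,d)$ and $\sigma(a,b,c,d)$ (for a fixed quadruple satisfying $a\imp c$, $b\imp d$, $a\mid b$, $d\mid c$), since the global equivalence between $\Theta$ and $\Sigma$ then follows by noting that the antecedents of $\Theta$ and $\Sigma$ coincide and that the disjuncts transform correctly (the second disjunct $\theta(b,a,d,c)$ resp.\ $\sigma(b,a,d,c)$ satisfies the same hypotheses with the roles of $a,b$ and $c,d$ swapped, which is again covered by the local statement). By symmetry it is enough to prove one direction, say that $\theta(a,b,c,d)$ implies $\sigma(a,b,c,d)$; the reverse implication is obtained by an entirely analogous argument (or by a suitable duality, reversing all edges, as in Remark \ref{dualityPhi}).

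So assume $e_0,\dots,e_n$ witness $\theta(a,b,c,d)$, and by Remark \ref{rem:thetaimp} assume in addition that $b\imp e_i$ for every $i$ with $0<i\le n$. Recall the $\theta$-clauses: $e_0\imp b$, $e_{i+1}\imp e_i\imp d$ for all $i<n$, $d\imp e_n$, and $e_n\mid a$. The idea is to build a $\sigma$-witnessing sequence by reading the $e_i$'s in reverse order. First I would verify that $e_n$ already does a lot of the work: we have $d\imp e_n$ (that is heading toward $(\sigma_1)$), $e_n\imp b$ or $b\imp e_n$ --- in fact $b\imp e_n$ by \pt d --- and $e_n\mid a$. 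The natural guess is to take the $\sigma$-sequence to be (a suitable final segment of) $e_n,e_{n-1},\dots,e_1$, possibly dropping $e_0$, and to check the four $\sigma$-clauses: $(\sigma_1)$ $d\imp$(first term), which holds since $d\imp e_n$; $(\sigma_2)$ $b\imp e_i\imp e_{i+1}$ along the reversed chain, which should follow from $e_{i+1}\imp e_i$ (clause $(\theta_2)$, reversed direction gives the required $e_i\imp e_{i+1}$ when read backwards) together with $b\imp e_i$ from the normalized form; $(\sigma_3)$ (last term)$\imp b$, and here one must be careful about where to stop the sequence --- we need a term mapping into $b$, which is where $e_0\imp b$ (clause $(\theta_1)$) comes in, but $e_0$ may fail $b\imp e_0$, so the sequence should terminate at $e_0$ only as the final element satisfying $(\sigma_3)$; $(\sigma_4)$ (last term)$\mid c$, i.e.\ $e_0\mid c$ (or whichever vertex ends the sequence is incomparable with $c$).

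The main obstacle I expect is clause $(\sigma_4)$: producing a vertex that is genuinely incomparable with $c$. Clause $(\theta_4)$ gives $e_n\mid a$, not $e_n\mid c$, and since $a\imp c$ these are different conditions. The resolution should be a pseudo-transitivity chase: starting from $e_n\mid a$ and working down the chain $e_n,e_{n-1},\dots$, one tracks the comparability of $e_i$ with $a$ and with $c$, using $a\imp c$, $a\mid b$, $d\mid c$, $b\imp e_i$, $e_{i+1}\imp e_i\imp d$ at each step (an argument in the spirit of Property \ref{Incom} and of the ``backward induction'' in the proof of Property \ref{ei-x}). Most likely one shows that along the chain incomparability with $a$ propagates to some $e_k$ which is then forced to be incomparable with $c$ as well (if $e_k-c$ then combining with $a\imp c$ and $a\mid e_k$ via \pt c or via $a$ one derives a contradiction with $a\mid b$ or with $d\mid c$), and that this $e_k$, together with the preceding portion of the chain, furnishes the $\sigma$-witness. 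I would isolate this propagation as the key computation; the remaining verifications of $(\sigma_1)$--$(\sigma_3)$ are routine rewrites of the $\theta$-clauses. Finally, the last sentence of the lemma --- $\Theta$ iff $\Sigma$ --- is immediate from the quantified local equivalence just established, applied to both disjuncts.
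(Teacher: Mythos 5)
Your overall skeleton is the same as the paper's: normalize the $\theta$-witness via Remark \ref{rem:thetaimp} so that $b\imp e_i$ for $i>0$, reverse it to $e_n,\dots,e_0$, and observe that $(\sigma_1)$, $(\sigma_2)$, $(\sigma_3)$ are immediate from $(\theta_3)$, from $(\theta_2)$ plus the normalization, and from $(\theta_1)$ respectively. But the crucial clause $(\sigma_4)$, namely $e_0\mid c$, is exactly the point you leave open, and the mechanism you gesture at does not work as described. Incomparability with $a$ does not propagate along the chain: pseudo-transitivity (as in Property \ref{Incom}) propagates \emph{comparability}, not incomparability, so there is no reason why $e_k\mid a$ should hold for any $k<n$. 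Moreover, your terminal contradiction needs both $a\mid e_k$ and $d\imp e_k$ simultaneously, and these are only guaranteed at $k=n$ (by $(\theta_4)$ and $(\theta_3)$); for $k<n$ one has $e_k\imp d$ instead, and no contradiction with $d\mid c$ arises. Finally, ending the $\sigma$-sequence at some $e_k$ with $k>0$ is not an option at all, since the normalization gives $b\imp e_k$, which kills $(\sigma_3)$; the sequence must end at $e_0$, and the task is precisely to show $e_0\mid c$.

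The argument that closes this gap runs in the opposite direction, by contradiction. Suppose $e_0-c$. Since $c\mid d$ and $(\theta_2)$ gives $e_{i+1}\imp e_i\imp d$ for all $i<n$, Property \ref{Incom} (with $u=c$, $v=d$) yields $c-e_i$ for every $i\le n$; in particular $c-e_n$. Since $a\mid e_n$ by $(\theta_4)$ and $a\imp c$, the edge between $c$ and $e_n$ must be $e_n\imp c$ (otherwise $a\imp c\imp e_n$ would force $a-e_n$). But $d\imp e_n$ by $(\theta_3)$, so $d\imp e_n\imp c$ gives $c-d$ by \pt{e_n}, contradicting $c\mid d$. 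Hence $e_0\mid c$ and the full reversed sequence $e_n,\dots,e_0$ witnesses $\sigma(a,b,c,d)$; the backward direction and the deduction of the second statement from the first are as you say.
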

\begin{proof}
Since the antecedents of $\Sigma$ and $\Theta$ coincide and imply the
hypothesis of the first statement, it is clear that the second statement
follows from the first.

For the forward direction of the first statement, let $e_0, \dots, e_n$
witness $\theta(a,b,c,d)$. By Remark \ref{rem:thetaimp} we can assume that $b
\imp e_i$ whenever $i>0$. We claim that $e_n, \dots, e_0$ witness
$\sigma(a,b,c,d)$. In fact, conditions ($\sigma_1$) and ($\sigma_3$) of
$\sigma(a,b,c,d)$ are exactly conditions ($\theta_3$) and ($\theta_1$) of
$\theta(a,b,c,d)$. Condition ($\sigma_2$) of $\sigma(a,b,c,d)$ is now
$\forall i< n \, (b \imp e_{i+1} \imp e_i)$ and follows easily from our
assumption on the $e_i$'s and from condition ($\theta_2$) of
$\theta(a,b,c,d)$. We are left with showing condition ($\sigma_4$) of
$\sigma(a,b,c,d)$, i.e.\ $e_0 \mid c$. Suppose on the contrary that $e_0-c$.
Since $c \mid d$, by Property \ref{Incom} it follows that $\forall i \le n \,
(c-e_i)$. In particular, $c-e_n$ and so $e_n \imp c$ because $a \mid e_n$ by
($\theta_4$) of $\theta(a,b,c,d)$. But then $c-d$ by \pt{e_n}, contrary to
the assumptions.

The proof of the backward direction is analogous.
\end{proof}

Thanks to the previous lemma it suffices to concentrate on $\Theta$.

%

The following duality principle is analogous to Remark \ref{dualityPhi}. It
is not needed elsewhere and we include it here for completeness without
proof.

\begin{remark}
Notice that the \ght\ $(V, \imp, \prec)$ satisfies $\Theta$ if and only if
the \ght\ $(V, \leftarrow, \succ)$ satisfies $\Theta$.
\end{remark}

\begin{lemma} \label{charAB}
Let $(V,\imp,\prec)$ be a \ght. Let $a,b,c,d\in V$ be such that $a\imp c$, $b\imp d$, $a\mid
b$, and $d\mid c$ and assume that $\theta(a,b,c,d)$ holds. Then for each
$(V\cup\{x\}, \imp')$ pseudo-transitive extension of $(V,\imp)$ if $a-'x-'b$
holds we have $x-'d$, and if $c-'x-'d$ holds we have $x-'b$.
\end{lemma}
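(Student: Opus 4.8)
The plan is to treat the two assertions separately, though they are essentially symmetric once we pass through $\sigma$. First consider the case $a-'x-'b$. Since $a\mid b$ we are in one of the two transitive-triangle configurations: either $a\imp' x\leftarrow' b$ or $a\leftarrow' x\imp' b$. I would handle $a\imp' x\leftarrow' b$ first. Let $e_0,\dots,e_n$ witness $\theta(a,b,c,d)$, and by Remark \ref{rem:thetaimp} assume $b\imp e_i$ for all $i>0$. The idea is to walk along the sequence $e_0,\dots,e_n$ and show $x-'e_i$ for every $i$, then use ($\theta_3$) and ($\theta_4$) to conclude $x-'d$. For the base case, ($\theta_1$) gives $e_0\imp b$, and together with $x\imp' b$ (wait — we have $b\imp' x$ here since $a\imp' x\leftarrow' b$ means $b\imp' x$), so $e_0\imp b\imp' x$ gives $e_0-'x$ by \pt b. Hmm, I should be careful about arrow directions: in the configuration $a\imp' x\leftarrow' b$ we have $b\imp' x$, so from $e_0\imp b\imp' x$ we get $e_0-'x$. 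Then for the inductive step, given $x-'e_i$ with $i<n$, condition ($\theta_2$) gives $e_{i+1}\imp e_i\imp d$, and since $a\mid e_n$ but more usefully since $b\imp e_i$ (for $i>0$; for $i=0$ argue directly from $e_0\imp b$), one checks $x-'e_{i+1}$ by pseudo-transitivity through $e_i$ — either $e_{i+1}\imp e_i\imp' x$ if $e_i\imp' x$, giving $e_{i+1}-'x$ by \pt{e_i}, or if $x\imp' e_i$ then use $x\imp' e_i$, $e_{i+1}\imp e_i$: this direction needs more care, possibly routing through $b$. After reaching $x-'e_n$, combine with $d\imp e_n$ (by ($\theta_3$)): if $e_n\imp' x$ then $d\imp e_n\imp' x$ gives $d-'x$ by \pt{e_n}; if $x\imp' e_n$ then... we need $x-'d$, and $e_n\mid a$ by ($\theta_4$) — here I expect to need the hypothesis $a-'x$ to force something, or to observe that $x\imp' e_n$ together with the walk back forces $x-'a$, contradicting nothing but helping. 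The precise bookkeeping of the arrow directions along this walk is where the real work lies.

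For the other configuration $a\leftarrow' x\imp' b$ in the case $a-'x-'b$, I would use $\sigma(a,b,c,d)$ instead of $\theta$ — which is available by Lemma \ref{SoloTheta} — and run the symmetric walk: from ($\sigma_1$) $d\imp e_0$, and the sequence satisfies $b\imp e_i\imp e_{i+1}$, ending with $e_n\imp b$ and $e_n\mid c$. Here $x\imp' b$ and $e_n\imp b$; I would walk from $e_n$ down to $e_0$ showing $x-'e_i$ throughout using Property \ref{Incom}-style reasoning with $b$ playing the role of $v$ — note $x-e_n$ should follow since $x\imp' b$ and we can relate $x$ to $e_n$ via $b$, perhaps needing $x\imp' b$, $e_n\imp b$ and pseudo-transitivity is not directly applicable (both point into $b$), so instead I'd go $x\imp' b\imp e_i$ for $i>0$ giving $x-'e_i$ directly by \pt b for all $i>0$, then handle $e_0$ via ($\sigma_2$) with $i=0$: $b\imp e_0\imp e_1$... wait ($\sigma_1$) says $d\imp e_0$ and ($\sigma_2$) starts the $b\imp e_i$ chain at $i$ ranging over $i<n$, so $b\imp e_0$ when $n>0$; when $n=0$ we just have $d\imp e_0\imp b$ and $e_0\mid c$ and must show $x-'d$ from $x\imp' b$, $e_0\imp b$, $d\imp e_0$: then $x-'e_0$ is not immediate but $d\imp e_0$ and if we had $x-'e_0$ we'd get $x-'d$ by \pt{e_0}. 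Getting $x-'e_0$ in the $n=0$ subcase is a small sticking point I would resolve by a direct pseudo-transitivity chase, likely using that $x$ and $e_0$ both relate to $b$.

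For the second assertion, $c-'x-'d$ implies $x-'b$: again $c\mid d$ puts us in one of the two triangle configurations $c\imp' x\leftarrow' d$ or $c\leftarrow' x\imp' d$. The natural approach is to again use $\theta(a,b,c,d)$ or $\sigma(a,b,c,d)$ (interchangeable by Lemma \ref{SoloTheta}) and walk along the witnessing sequence, this time anchored at the $d$-end via ($\theta_3$)/($\sigma_1$) and concluding at the $b$-end via ($\theta_1$)/($\sigma_3$). For instance with $\theta$ and the configuration where $d\imp' x$ (so from $d\imp e_n$, wait ($\theta_3$) is $d\imp e_n$, and $d\imp' x$ — both out of $d$, not directly useful; instead use ($\theta_2$) $e_{i+1}\imp e_i\imp d$ so $e_i\imp d\imp' x$... no). I think the cleaner route is: in the configuration $c\leftarrow' x\imp' d$ we have $x\imp' d$, and ($\theta_3$) gives $d\imp e_n$, so $x\imp' d\imp e_n$ gives $x-'e_n$ by \pt d; then walk from $e_n$ toward $e_0$ using ($\theta_2$): given $x-'e_{i+1}$, since $e_{i+1}\imp e_i$ (from ($\theta_2$)) and also $e_i\imp d$ so $e_i-'x$... hmm need pseudo-transitivity, if $e_{i+1}\imp' x$ then $e_i$?, if $x\imp' e_{i+1}$ then $x\imp' e_{i+1}$ and $e_{i+1}\imp e_i$ gives $x-'e_i$ by \pt{e_{i+1}}. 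Reaching $x-'e_0$ and using ($\theta_1$) $e_0\imp b$: if $x\imp' e_0$ then $x\imp' e_0\imp b$ gives $x-'b$ by \pt{e_0}, done; if $e_0\imp' x$ we need another argument — perhaps ($\theta_4$) $e_n\mid a$ combined with $c-'x$ forces a contradiction with $e_0\imp' x$, or we invoke the already-proved first assertion in contrapositive form. The main obstacle throughout is the careful case analysis on edge directions in the walk — each inductive step branches on whether the current $e_i$ points into or out of $x$, and in the "unfavorable" branch one must find an alternate pseudo-transitivity route (typically through $b$ or $d$). I would organize this by proving a single helper claim — something like "for every $i$, $x-'e_i$, and moreover $x\imp' e_i$ iff [the triangle is oriented a fixed way]" — so that the unfavorable branch simply cannot arise. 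Establishing that uniform-direction invariant along the walk is the crux; once it is in place, the endpoint conclusions $x-'d$ and $x-'b$ are immediate from ($\theta_1$), ($\theta_3$) and a final \pt{e_0} or \pt{e_n}.
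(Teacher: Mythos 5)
There is a genuine gap, and it sits exactly where you say ``this direction needs more care'': your walk along $e_0,\dots,e_n$ has an inductive step that you cannot close. In the configuration $a\imp' x\leftarrow' b$, when $x\imp' e_i$ and $e_{i+1}\imp e_i$ both arrows point into $e_i$ and pseudo-transitivity gives nothing; routing through $b$ does not help either, since $b\imp' x$ and $b\imp e_{i+1}$ both point out of $b$. The missing idea, which is how the paper makes the walk work, is to argue by contradiction: assume $x\mid' d$ (this is harmless, since otherwise the conclusion $x-'d$ already holds). With that assumption, Property \ref{Incom} applies with $u=x$, $v=d$ --- its hypothesis $u\mid v$ is precisely what pins down the edge directions at each step, using ($\theta_2$) in the form $e_{i+1}\imp e_i\imp d$ --- and yields $x-'e_i$ for all $i\le n$ from the base case $x-'e_0$ (by \pt{b}, via ($\theta_1$)). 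Your attempt never negates the conclusion, so the ``uniform-direction invariant'' you hope for is not available, and the induction as you describe it cannot be completed. The endpoint is then also a one-liner you only grope toward: $x\imp' e_n$ is impossible because $a\imp' x\imp' e_n$ would give $a-e_n$ by \pt{x}, contradicting ($\theta_4$); hence $e_n\imp' x$, and $d\imp e_n\imp' x$ gives $d-'x$ by \pt{e_n}, the desired contradiction.

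A second, smaller problem: the remaining configurations do not need a $\sigma$-walk at all, and your sketches for them contain further admitted sticking points (e.g.\ the $n=0$ subcase). If $a\leftarrow' x\imp' b$, then $x\imp' b\imp d$ gives $x-'d$ immediately by \pt{b} (and \pt{a} gives $x-'c$). Likewise, for the second assertion, if $c\imp' x\leftarrow' d$ then $b\imp d\imp' x$ gives $x-'b$ by \pt{d}; only the configuration $c\leftarrow' x\imp' d$ needs the sequence argument, run with $\sigma(a,b,c,d)$ (available by Lemma \ref{SoloTheta}) under the contradiction hypothesis $x\mid' b$, with ($\sigma_4$) playing the role of ($\theta_4$) at the endpoint. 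So your overall shape (case split, walk along the witnessing sequence, conclude at an endpoint via ($\theta_1$)/($\theta_3$)) matches the paper, but without the reductio assumption the core step fails, and the easy cases are misidentified as needing walks of their own.
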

\begin{proof}
Let $(V\cup\{x\}, \imp')$ be a pseudo-transitive extension of $(V,\imp)$ with
$a-'x-'b$. Notice that, since $a\mid b$, either $a \leftarrow' x \imp' b$ or
$a\imp' x \leftarrow' b$. In the first case \pt a and \pt b guarantee that
$c-'x-'d$, so we concentrate on the other case.

Suppose that $a\imp' x \leftarrow' b$ and let $e_0, \dots, e_n$ witness
$\theta(a,b,c,d)$. Towards a contradiction, assume $x \mid' d$. Notice that
$x-'e_0$ by \pt b (we use condition ($\theta_1$)). Hence by condition ($\theta_2$) and Property
\ref{Incom} it holds that $\forall i\le n \, (x-'e_i)$, so that in particular
$x-'e_n$. It cannot hold that $x \imp e_n$, otherwise $a-e_n$ by \pt x
contrary to ($\theta_4$). Hence $e_n \imp x$ holds. Moreover, $d \imp e_n$ by
condition ($\theta_3$) and so \pt{e_n} implies $d-'x$.

A similar argument shows that $c-'x-'d$ implies $x-'b$. The only change is
due to the fact that when $c \leftarrow' x \imp' d$ then we use
$\sigma(a,b,c,d)$, which holds by Lemma \ref{SoloTheta}.
\end{proof}

We can now show that $\Theta$ is sufficient for the second condition of Lemma
\ref{Nec}.

\begin{lemma}\label{char2Frecce}
Let $(V,\imp,\prec)$ be a \ght\ satisfying $\Theta$. For each $(V\cup\{x\},
\imp')$ pseudo-transitive extension of $(V,\imp)$ we have $N^-(x)
\setminus N^+(x) \prec N^+(x) \setminus N^-(x)$.
\end{lemma}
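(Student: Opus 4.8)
The plan is to derive the conclusion from Lemma~\ref{CharProp2}(1): it suffices to show that \emph{every} pseudo-transitive extension $(V\cup\{x\},\imp')$ of $(V,\imp)$ satisfies the property $\Lambda$ introduced there. So I fix $a,b,c,d\in V$ with $a\mid b$, $c\mid d$, $a\prec c$, $d\prec b$, $x-'a$ and $x-'b$, and argue by contradiction, assuming $x\mid'c$ and $x\mid'd$. By Observation~\ref{c mid b} these four vertices form a $2\oplus 2$. Since $a\mid b$ while $a-'x-'b$, pseudo-transitivity of $\imp'$ rules out $a\imp'x\imp'b$ and $b\imp'x\imp'a$, so exactly one of $a\imp'x\leftarrow'b$ (that is, $x$ a sink of the cherry $a-x-b$) and $a\leftarrow'x\imp'b$ (that is, $x$ a source) holds; I split on this.

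In the first case ($a\imp'x$ and $b\imp'x$), pseudo-transitivity forces $a\imp c$ and $b\imp d$: indeed $c\imp a$ would give $c\imp a\imp'x$, hence $c-'x$, contradicting $x\mid'c$, and symmetrically $d\imp b$ is impossible. Now the antecedent of $\Theta$ holds for the tuple $(a,b,c,d)$, so $\theta(a,b,c,d)$ or $\theta(b,a,d,c)$. In either subcase I apply Lemma~\ref{charAB} — to the tuple $(a,b,c,d)$ in the first, to $(b,a,d,c)$ in the second, noting that its hypotheses $A\imp C$, $B\imp D$, $A\mid B$, $D\mid C$ are exactly our $2\oplus 2$ data suitably relabelled — and invoke its clause ``$A-'x-'B \Rightarrow x-'D$''. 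Since $a-'x-'b$ (equivalently $b-'x-'a$), this yields $x-'d$ (resp.\ $x-'c$), the desired contradiction with $x\mid'd$ (resp.\ $x\mid'c$).

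In the second case ($x\imp'a$ and $x\imp'b$) the analogous argument forces $c\imp a$ and $d\imp b$ (now $a\imp c$ would give $x\imp'a\imp c$, hence $x-'c$). Here $\Theta$ no longer applies to $(a,b,c,d)$, but it does apply to the tuple $(d,c,b,a)$: one checks its antecedent is precisely our data with $a'=d$, $b'=c$, $c'=b$, $d'=a$. This yields $\theta(d,c,b,a)$ or $\theta(c,d,a,b)$, and in each subcase I invoke the \emph{other} clause of Lemma~\ref{charAB}, namely ``$C-'x-'D\Rightarrow x-'B$'': for $(d,c,b,a)$ this reads ``$b-'x-'a\Rightarrow x-'c$'', and for $(c,d,a,b)$ it reads ``$a-'x-'b\Rightarrow x-'d$''. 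Since $a-'x-'b$ holds, we obtain $x-'c$ (resp.\ $x-'d$), again a contradiction. Having ruled out both cases, $\Lambda$ holds, and Lemma~\ref{CharProp2}(1) completes the proof.

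The pseudo-transitivity deductions are routine. The step I expect to need the most care is the second case: recognising that the correct permutation to feed into $\Theta$ is $(d,c,b,a)$, and that the relevant conclusion of Lemma~\ref{charAB} is its ``$C-'x-'D$'' clause rather than its ``$A-'x-'B$'' clause. This is also how the argument sidesteps the duality remark stated just before the lemma, which is advertised as not needed elsewhere.
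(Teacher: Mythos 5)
Your proof is correct and follows essentially the paper's own route: reduce to condition $\Lambda$ via Lemma \ref{CharProp2}(1), then combine $\Theta$ with Lemma \ref{charAB}, including exactly the same relabelled tuples (e.g.\ $(d,c,b,a)$ and $(c,d,a,b)$) for the case $c \imp a$, $d \imp b$. The only difference is organizational: you split on whether $x$ is a sink or a source of the cherry $a-'x-'b$ and argue by contradiction, while the paper splits on the four possible directions of the edges $a-c$ and $d-b$ and handles the two mixed cases directly by pseudo-transitivity; the substance is the same.
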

\begin{proof}
By Lemma \ref{CharProp2}.1 it suffices to prove condition $\Lambda$. Fix
$a,b,c,d \in V$ such that $a \prec c$, $d \prec b$, $a\mid b$, and $c\mid d$
and assume that $a-'x-'b$. We need to prove that either $x-'c$ or $x-'d$.

Since $a-c$ and $d-b$ there are four possible situations. If $a \imp c$ and
$d \imp b$, but $x\mid 'c$, then $a \imp' x \leftarrow' b$ and $x-'d$ follows
by \pt b. If $c \imp a$ and $b \imp d$ the argument is similar. If instead $a
\imp c$ and $b \imp d$ notice that $\Theta$ implies $\theta(a,b,c,d)$ or
$\theta(b,a,d,c)$: then Lemma \ref{charAB} yields the conclusion. The last
possibility is $c \imp a$ and $d \imp b$, where we use the second part of
Lemma \ref{charAB} (in this case $a,b,c,d$ play roles which are opposite to
those of the Lemma).
\end{proof}

We now prove that $\Theta$ is necessary for $N^-(x)\setminus N^+(x) \prec
N^+(x) \setminus N^-(x)$ if $\Phi$ and $\Psi$ hold.

\begin{lemma} \label{suf2Frecce}
Let $(V,\imp,\prec)$ be a \ght\ such that $\Phi$ and $\Psi$ hold and $\Theta$ fails. Then
there is a pseudo-transitive extension $(V\cup \{x\}, \imp')$ of $(V, \imp)$
such that $N^-(x)\setminus N^+(x) \nprec N^+(x) \setminus N^-(x)$ and hence
$(V,\imp, \prec)$ is not extendible by Lemma \ref{Nec}.
\end{lemma}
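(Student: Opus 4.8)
The conclusion follows as soon as we exhibit a single pseudo-transitive extension $(V\cup\{x\},\imp')$ of $(V,\imp)$ with $a-'x$, $b-'x$, $x\mid'c$ and $x\mid'd$, where $a,b,c,d$ are the vertices supplied below. Indeed, since $\Phi$ and $\Psi$ hold, Corollary~\ref{N(x)} yields $N(x)=N^+(x)\cup N^-(x)$ for this extension; from $a\prec c$ together with $c\notin N(x)$ we get $a\notin N^+(x)$, hence $a\in N^-(x)\setminus N^+(x)$; from $d\prec b$ together with $d\notin N(x)$ we get $b\notin N^-(x)$, hence $b\in N^+(x)\setminus N^-(x)$; and $a\mid b$ forces $a\nprec b$. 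Therefore $N^-(x)\setminus N^+(x)\nprec N^+(x)\setminus N^-(x)$, and $(V,\imp,\prec)$ is not extendible by Lemma~\ref{Nec}. So all the work lies in producing this extension.

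The failure of $\Theta$ gives $a,b,c,d\in V$ with $a\imp c$, $b\imp d$, $a\mid b$, $c\mid d$, $a\prec c$, $d\prec b$ and both $\theta(a,b,c,d)$ and $\theta(b,a,d,c)$ false; by Lemma~\ref{SoloTheta} also $\sigma(a,b,c,d)$ and $\sigma(b,a,d,c)$ are false, and by Observation~\ref{c mid b} the four vertices form a $2\oplus 2$ (the configuration of the left ograph of Figure~\ref{fig:2+2}, with the new vertex attached to $a$ and $b$). The plan is to build $\imp'=\bigcup_{n}\imp_{n}$ by stages in the spirit of the construction in the proof of Lemma~\ref{sufTrian}: $\imp_0$ adds to $\imp$ the two edges $a\imp_0 x$ and $b\imp_0 x$, and $\imp_{n+1}$ adds, for every $u\in V$ that some triple already forces to be $\imp'$-comparable with $x$ (that is, $x\imp_n v\imp u$ or $u\imp v\imp_n x$ for some $v$), an oriented edge between $u$ and $x$, its orientation being prescribed by a case distinction on the position of $u$, in the same spirit as the distinction $a\imp u\leftarrow b$ versus $a\leftarrow u\imp b$ used in Lemma~\ref{sufTrian}. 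By design neither $c$ nor $d$ ever receives an edge to $x$, so $x\mid'c$ and $x\mid'd$, while $a-'x$ and $b-'x$ hold from stage $0$.

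What then remains is to verify that $\imp'$ is pseudo-transitive. After establishing the analogues of Claims~\ref{Fact1} and~\ref{Fact2} (controlling which vertices become comparable with $x$ and how), this reduces, exactly as in Lemma~\ref{sufTrian}, to the three cases $v\imp'x\imp'u$, $x\imp'v\imp u$, $u\imp v\imp'x$. In each case the orientation assigned by the rule makes the triple harmless; the only exceptions are the ``wrong shape'' configurations that would force $x$ to become comparable with $c$ or with $d$, and these are excluded by unwinding the chain $e_0,\dots,e_n$ of intermediate vertices produced by the stage construction and recognizing it as a witness of one of $\theta(a,b,c,d)$, $\theta(b,a,d,c)$, $\sigma(a,b,c,d)$, $\sigma(b,a,d,c)$, contradicting the standing hypothesis. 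Identifying these chains with the four defining patterns — which is exactly where it becomes clear why all four negations are needed and why both of the (by Lemma~\ref{SoloTheta} equivalent) packagings $\Theta$ and $\Sigma$ enter — is the delicate heart of the argument; the surrounding bookkeeping is routine and parallels that of Lemma~\ref{sufTrian}.
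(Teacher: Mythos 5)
Your reduction in the first paragraph is fine and matches the paper's strategy: once an extension with $a-'x-'b$, $x\mid'c$, $x\mid'd$ exists, Corollary~\ref{N(x)} (or Lemma~\ref{characterization}) plus the argument of Lemma~\ref{CharProp2}.\ref{CharProp22} gives the failure of $N^-(x)\setminus N^+(x) \prec N^+(x)\setminus N^-(x)$. The gap is that you never actually define the extension, and the phrase \lq\lq in the same spirit as the distinction $a\imp u\leftarrow b$ versus $a\leftarrow u\imp b$ used in Lemma~\ref{sufTrian}\rq\rq\ points at a rule that does not work here. A vertex $u$ that gets forced into comparability with $x$ (via $x\imp_n v\imp u$ or $u\imp v\imp_n x$) need not relate to $a$ and $b$ in one of those two patterns: it may satisfy $a\imp u\imp b$, or be comparable with only one of $a,b$, and then the naive rule leaves the forced edge unoriented and pseudo-transitivity of $\imp'$ fails outright. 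The rule that makes the construction go through orients new edges by the relation to $c$ and $d$, not to $a$ and $b$: set $x\imp_{n+1}u$ when comparability is forced and $c\imp u$ or $d\imp u$, and $u\imp_{n+1}x$ when $u\imp c$ or $u\imp d$. This choice is what simultaneously guarantees consistency (no $u$ gets both edges, since $c\mid d$), that $x\mid'c$ and $x\mid'd$ (neither $c$ nor $d$ ever satisfies the trigger, again because $c\mid d$), and, via pseudo-transitivity of $\imp$, that every forced vertex does satisfy one of the two clauses (the analogue of Claim~\ref{suf2Frecce.3}), so the easy cases $x\imp'v\imp u$ and $u\imp v\imp'x$ close. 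None of this is available from your unspecified rule, so the claim \lq\lq by design neither $c$ nor $d$ ever receives an edge to $x$\rq\rq\ is unsupported.

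Your third paragraph also misplaces where $\neg\theta$ enters. The delicate case is not excluding configurations \lq\lq forcing $x$ to become comparable with $c$ or $d$\rq\rq\ (with the correct rule that can simply never happen); it is the case $v\imp'x\imp'u$ with $u,v\in V$, where one must show that $u$ and $v$ are already comparable in $(V,\imp)$. There the paper extracts from the stage construction a chain $e_0,\dots,e_i$ satisfying ($\theta_1$)--($\theta_3$) of $\theta(a,b,c,d)$; since $\theta(a,b,c,d)$ fails, ($\theta_4$) must fail, i.e.\ $e_i-a$, and this comparability is then propagated along the two chains using Property~\ref{Incom} to conclude $u-v$. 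So the hypothesis is used constructively to produce comparabilities, not merely to contradict a witness, and only the two $\theta$-instances (for the two symmetric subcases) are needed; $\sigma$ and Lemma~\ref{SoloTheta} play no role in this verification. As it stands, your proposal defers precisely the two essential ingredients --- the orientation rule and the chain argument in the hard case --- so it does not yet constitute a proof.
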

\begin{proof}
Let $a,b,c,d \in V$ be such that $a\imp c$, $b\imp d$, $a\mid b$, $c\mid d$,
$a \prec c$, $d \prec b$ and $\neg\theta(a,b,c,d)$. We fix $x \notin V$ and
define an extension $\imp'$ of $(V,\imp)$ to $V \cup \{x\}$ in stages, as an
increasing union ${\imp'} = {\bigcup_{n\in \nat} \imp_n}$. For each stage
$n$, $\imp_n$ is defined as follows:
\begin{itemize}
  \item $\imp_0$ extends $\imp$ by adding the edges $a \imp x$ e $b \imp
      x$;
  \item $\imp_{n+1}$ extends $\imp_n$ by adding edges
\[\begin{cases}
x \imp_{n+1} u & \text{ if } \exists v ((x\imp_n v \imp u ) \lor (u \imp v \imp_n x))
\text{ and either $c \imp u$ or $d \imp u$;}\\
u \imp_{n+1} x & \text{ if } \exists v ((x\imp_n v \imp u ) \lor (u \imp v \imp_n x))
\text{ and either $u \imp c$ or $u \imp d$.} \\
\end{cases}\]
\end{itemize}
Notice that $x \imp' u$ and $u \imp' x$ are incompatible, since if $c \imp u$
or $d \imp u$ then we have neither $u \imp c$ nor $u \imp d$.

If we assume that $\imp'$ is pseudo-transitive we can complete the proof as
follows. Since $\Phi$ and $\Psi$ hold, by Lemma \ref{characterization} we
have $N(x)=N^+(x)\cup N^-(x)$. On the other hand, by definition of $\imp'$,
$a-'x-'b$ but $x\mid 'c$ and $x\mid 'd$ (because $c \mid d$ and hence we
never set $x \imp_{n+1} c$ or $x \imp_{n+1} d$) and condition $\Lambda$
fails. Thus, by Lemma \ref{CharProp2}{\color{red}.}\ref{CharProp22}, $N^-(x) \setminus N^+(x) \nprec
N^+(x) \setminus N^-(x)$.

Therefore to complete the proof it suffices to check the pseudo-transitivity
of $\imp'$. We first make a few preliminary observations.

\begin{claim}\label{suf2Frecce.3}
If $v \in V$ is such that $v \imp' x$ then either $v \imp c$ or $v \imp d$.
Similarly, if $u \in V$ is such that $x \imp' u$ then either $c \imp u$ or $d
\imp u$.
\end{claim}
\begin{prf}
Let $n$ be least such that $v \imp_n x$. If $n=0$ then $v$ is either $a$ or
$b$, which satisfy the conclusion. If $n>0$ then $v \imp c$ or $v \imp d$ is
required by definition. When dealing with $u$, the case $n=0$ cannot hold.
\end{prf}

\begin{claim}\label{suf2Frecce.4}
Let us assume that for $z,w \in V$ we have either $x \imp' z \imp w$ or $w
\imp z \imp' x$. Then if $w-c$ and $w\mid d$ we have also $z-c$ and $z\mid
d$, and similarly if $w-d$ and $w\mid c$ we have also $z-d$ and $z\mid c$.
\end{claim}
\begin{prf}
Assume $w-c$ and $w\mid d$. If  $x \imp' z\imp w$, then $c \imp z$ or $d \imp
z$ by Claim \ref{suf2Frecce.3}. If $d \imp z$, then $d-w$ by \pt z, contrary
to the assumption. So $c \imp z$, while $z \imp d$ cannot hold because $c\mid
d$. Thus we have $z-c$ and $z \mid d$. If $w \imp z \imp' x$ the argument is
similar.

The second statement is proved analogously.
\end{prf}

%

\begin{claim}\label{suf2Frecce.2}
$\forall e\, (e \ne a \land e \ne b \land e-_1 x \Imp e\imp a \lor e\imp b)$
\end{claim}
\begin{prf}
Let us suppose that $e \neq a$, $e \ne b$ and $e -_1 x$, so that $e -_0 x$
does not hold. The definition of $\imp_1$ implies that for some $v$ we have
either $x\imp_0 v \imp e$ or $e \imp v \imp_0 x$. Since the only $v$'s such
that $v -_0 x$ are $a$ and $b$, and $a \imp_0 x$ and $b \imp_0 x$, we must
have the second possibility with $v$ either $a$ or $b$.
\end{prf}

In order to prove that  $\imp'$ is  pseudo-transitive, there are some cases
to  consider.

\begin{enumerate}[a.]

\item $v\imp' x\imp' u$. By Claim \ref{suf2Frecce.3} either $v \imp c$ or $v
    \imp d$ and also $c \imp u$ or $d \imp u$. If either $v \imp c \imp u$
    or $v \imp d \imp u$, then $u-v$ follows by \pt c  or \pt d  of
    $\imp$.

    We now concentrate on the case $v\imp c$ and $d \imp u$, the other
    being similar. Notice that $c \mid d$ implies that $u \imp c$ and $d
    \imp v$ do not hold. Moreover, we can assume that $v \imp d$ and $c
    \imp u$ both fail, else we are in one of the previous cases. Hence $u
    \mid c$ and $v \mid d$. If $n$ is the minimum stage such that $x
    \imp_{n+1} u$ (notice that $x \imp_0 u$ cannot happen), there exists
    $e_{n-1}$ such that $x \imp_n e_{n-1} \imp u$ or $u \imp e_{n-1} \imp_n
    x$. Analogously, there must be an $e_{n-2}$ such that  $x \imp_{n-1}
    e_{n-2} \imp e_{n-1}$ or $e_{n-1} \imp e_{n-2} \imp_{n-2} x$. Iterating
    this procedure, we get to $x -_1 e_0$. Set also $e_n=u$. Similarly, let
    $k$ be least such that $v \imp_k x$ (in this case $k=0$ is possible)
    and set $h_k=v$. If $k>0$, with a procedure similar to the one used
    before, we find $h_0, \dots, h_{k-1}$ such that $h_j$ witnesses that $x
    -_{j+1} h_{j+1}$ for each $j<k$.

    Notice that a backward induction using Claim \ref{suf2Frecce.4} easily
    entails $\forall i <n \, (e_i-d \land e_i \mid c)$ and $\forall j <k \,
    (h_j-c \land h_j \mid d)$. Notice also that for each $i<n$ either $d
    \imp e_i \imp e_{i+1}$ or $e_{i+1} \imp e_i \imp d$ holds. In fact, if
    $d \imp e_i$, then $x \imp' e_i$ by definition and so $e_i \imp
    e_{i+1}$ by choice of $e_i$. If $e_i \imp d$ the argument is similar.
    Arguing as in the previous lines it is easy to show that for each $j<
    k$ either $c \imp h_j \imp h_{j+1}$ or $h_{j+1} \imp h_j \imp c$ holds
    as well.

Let $i \le n$ be least such that $d \imp e_i$. We claim that $e_0, \dots,
e_i$ satisfy the first three conditions of $\theta(a,b,c,d)$:
\begin{enumerate}[($\theta_1$)]
\item $e_0 \imp b$ by Claim \ref{suf2Frecce.2} because $e_0 \imp a$
    implies $e_0 - c$ by \pt a, which contradicts the above observation;
\item $\forall j<i \,(e_{j+1} \imp e_j \imp d)$: this is immediate by the minimality of $i$ and the observation in the previous paragraph;
\item $d \imp e_i$ by choice of $i$;
\end{enumerate}
Since $\theta(a,b,c,d)$ fails, condition ($\theta_4$) must fail, i.e.\ we
have $e_i-a$.

Since $a \mid d$ we can apply Property \ref{Incom} to obtain that $e_j-a$
for every $j\le n$ with $j \ge i$.
Recalling that $e_n=u$, we obtained $u-a$:
then $a \imp u$ because $a \mid d$.

We show that $h_j-u$ for every $j \le k$. Arguing as in the proof of
($\theta_1$) above, we have $h_0 \imp a$ so that $h_0-u$ by \pt a. Thus,
since $u \mid c$, we can apply Property \ref{Incom} again to obtain the
desired conclusion.
Recalling that
$h_k=v$ we have obtained $u-v$.

\item $x\imp' v\imp u$ then $c \imp v$ or $d \imp v$ by Claim
    \ref{suf2Frecce.3}. By \pt v, either $c-u$ or $d-u$ and $u$ satisfies
    one of the conditions in the definition of $\imp'$. Thus $x-'u$.
\item $u\imp v\imp' x$ is similar to the previous item.
\end{enumerate}
This shows that $\imp'$ is pseudo-transitive and hence that $(V,\imp, \prec)$ is not
extendible.
\end{proof}

Summarizing, we obtained a characterization of the conditions of Lemma
\ref{Nec}.

\begin{theorem}\label{N2<N1}
Let $(V,\imp,\prec)$ be a \ght. The following are equivalent:
\begin{enumerate}
\item for each pseudo-transitive extension $(V\cup \{x\}, \imp')$ of
    $(V,\imp)$ both $N(x)=N^+(x)\cup N^-(x)$ and $N^-(x)\setminus N^+(x)
    \prec N^+(x)\setminus N^-(x))$ hold;
\item $\Phi$, $\Psi$ and $\Theta$ are satisfied.
\end{enumerate}
\end{theorem}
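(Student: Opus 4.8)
The plan is to assemble this theorem directly from the two characterizations already obtained, namely Corollary \ref{N(x)} (which captures the first condition of Lemma \ref{Nec}) together with the pair Lemma \ref{char2Frecce}/Lemma \ref{suf2Frecce} (which capture the second). No new combinatorial work on ographs is required; the argument is purely a matter of chaining implications in the right order.

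For the implication from (1) to (2), I would first note that (1) asserts in particular that $N(x)=N^+(x)\cup N^-(x)$ for every pseudo-transitive extension $(V\cup\{x\},\imp')$ of $(V,\imp)$, so Corollary \ref{N(x)} immediately yields that $\Phi$ and $\Psi$ hold. It then remains to derive $\Theta$. Suppose towards a contradiction that $\Theta$ fails; since $\Phi$ and $\Psi$ do hold, Lemma \ref{suf2Frecce} applies and produces a pseudo-transitive extension $(V\cup\{x\},\imp')$ with $N^-(x)\setminus N^+(x)\nprec N^+(x)\setminus N^-(x)$, contradicting the second conjunct of (1). Hence $\Theta$ holds as well, and (2) follows.

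For the converse, assume $\Phi$, $\Psi$, and $\Theta$ all hold, and fix an arbitrary pseudo-transitive extension $(V\cup\{x\},\imp')$ of $(V,\imp)$. From $\Phi$ and $\Psi$, Corollary \ref{N(x)} gives $N(x)=N^+(x)\cup N^-(x)$, while from $\Theta$, Lemma \ref{char2Frecce} gives $N^-(x)\setminus N^+(x)\prec N^+(x)\setminus N^-(x)$. Both conjuncts of (1) thus hold for this extension, and since the extension was arbitrary, (1) follows. Combining the two directions completes the proof.

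Since each step merely invokes an already-established lemma, there is no genuine obstacle here; the only point that needs care is the hypothesis bookkeeping in the (1)$\Rightarrow$(2) direction, where one must extract $\Phi$ and $\Psi$ \emph{first}, so that the contrapositive use of Lemma \ref{suf2Frecce} — whose statement presupposes exactly these two formulas — is legitimate when arguing that $\Theta$ must hold.
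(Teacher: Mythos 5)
Your proposal is correct and follows essentially the same route as the paper, which likewise derives (1)$\Rightarrow$(2) from Lemmas \ref{sufTrian} and \ref{suf2Frecce} (your use of Corollary \ref{N(x)} is just the packaged form of the former) and (2)$\Rightarrow$(1) from Lemmas \ref{characterization} and \ref{char2Frecce}. Your remark about extracting $\Phi$ and $\Psi$ first so that Lemma \ref{suf2Frecce} can legitimately be applied to obtain $\Theta$ is exactly the hypothesis bookkeeping implicit in the paper's terse proof.
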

\begin{proof}
The implication (1) $\Imp$ (2) follows from Lemmas \ref{sufTrian} and
\ref{suf2Frecce}. The implication (2) $\Imp$ (1) follows from Lemmas
\ref{characterization} and \ref{char2Frecce}.
\end{proof}

Thanks to Theorem \ref{N2<N1} we can now reformulate Lemma \ref{Nec} in a way
that does not refer to all possible pseudo-transitive extensions of
$(V,\imp)$ but mentions only structural properties of $(V, \imp)$ and
$\prec$.

\begin{theorem}
Let $(V,\imp,\prec)$ be an extendible \ght. Then $\Phi$, $\Psi$ and $\Theta$ are
satisfied.
\end{theorem}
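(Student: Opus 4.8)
The plan is to chain together the two main results already established, since the statement is essentially their composition. First I would invoke Lemma \ref{Nec}: because $(V,\imp,\prec)$ is extendible, for every pseudo-transitive extension $(V\cup\{x\},\imp')$ of $(V,\imp)$ we have both $N(x)=N^+(x)\cup N^-(x)$ and $N^-(x)\setminus N^+(x) \prec N^+(x)\setminus N^-(x)$. This is exactly item (1) in the statement of Theorem \ref{N2<N1}.

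Then I would apply the implication (1) $\Imp$ (2) of Theorem \ref{N2<N1}, which immediately yields that $\Phi$, $\Psi$ and $\Theta$ are satisfied. That completes the argument.

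I expect no genuine obstacle here: all the work has already been done. Lemma \ref{Nec} supplies the necessity of the two neighborhood conditions, extracted from the analysis of the transitive triangle and $2\oplus 2$ examples, while Theorem \ref{N2<N1} (built on Lemmas \ref{sufTrian} and \ref{suf2Frecce}) translates those conditions into the structural formulas $\Phi$, $\Psi$, $\Theta$ that refer only to $(V,\imp)$ and $\prec$. The only point worth recalling in passing is why $\Sigma$ does not appear in the conclusion: by Lemma \ref{SoloTheta}, $\Theta$ and $\Sigma$ are equivalent for \ght s, so asserting $\Theta$ already encodes the $2\oplus 2$ obstruction in both orientations.
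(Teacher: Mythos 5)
Your proposal is correct and is exactly the argument the paper intends: the theorem is stated as an immediate consequence of combining Lemma \ref{Nec} (extendibility forces the two neighborhood conditions for every one-vertex pseudo-transitive extension) with the implication (1) $\Imp$ (2) of Theorem \ref{N2<N1}. Nothing further is needed, and your side remark on $\Sigma$ via Lemma \ref{SoloTheta} matches the paper's treatment.
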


It follows from Lemma \ref{trans} below that the reverse implication holds as
well, namely that $\Phi$, $\Psi$ and $\Theta$ are also sufficient conditions
of the extendibility of a \ght.

\section{The Smart Extension Algorithm}\label{sec:algo}

In this section we define an on-line algorithm to transitively reorient a
countable pseudo-transitive ograph. Before defining the algorithm we give
some preliminary definitions.

\begin{definition}
Let $(V,\imp,\prec)$ be a \ght. If $(V\cup \{x\},\imp')$ is a pseudo-transitive extension of $\imp$,
we define inductively the following subsets of $N(x)$:
\begin{align*}
     S_0^-(x)&=  N^-(x)\setminus N^+(x); \\
     S_0^+(x) &=  N^+(x)\setminus N^-(x);  \\
     S_i(x) &= S_i^-(x) \cup S_i^+(x); \\
   S_{i+1}^-(x)&=  \{a\in N(x)\setminus \textstyle{\bigcup_{j\le i} S_j(x)} \, \mid \,  \exists s \in S_i^-(x) (a\mid s)\};   \\
   S_{i+1}^+(x)&=  \{a\in N(x)\setminus \textstyle{\bigcup_{j\le i} S_j(x)} \, \mid \,  \exists s \in S_i^+(x) (a\mid s)\}.
\end{align*}
Let $S^+(x)=\bigcup_{i\in \nat} S_i^+(x)$, $S^-(x)=\bigcup_{i\in \nat}
S_i^-(x)$ and $S(x)= S^-(x) \cup S^+(x) = \bigcup_{i\in \nat} S_i(x)$. Let
also $T(x)=N(x)\setminus S(x)$.

If $* \in \{+,-\}$ we say that a sequence $\rho = \langle \rho(0), \rho(1),
\dots, \rho(|\rho|-1) \rangle$ of elements of $V$ is a $*$-sequence if
$\rho(i) \in S_i^*(x)$ for every $i < |\rho|$ and $\rho(i) \mid \rho(i+1)$
for every $i < |\rho|-1$.
\end{definition}

\begin{remark}\label{remS*}
If $N(x) = N^+(x) \cup N^-(x)$ then $S(x) \setminus S_0(x)$ and $T(x)$ are
both included in $N^+(x)\cap N^-(x)$. Moreover $S(x) \subseteq N(t)$ for
every $t\in T(x)$ (because if $t \in N(x)$ and $S_i(x) \setminus N(t) \neq
\emptyset$ then $t \in S_{i+1}(x)$).

Notice that if $s \in S_i^*(x)$ then there exists a $*$-sequence $\rho$ such
that $\rho(i)=s$.
\end{remark}

For the remainder of the section we use $S^*(x)$ as a shorthand for either
$S^+(x)$ or $S^-(x)$. $S^*_i(x)$ is used similarly and $s_i^*$ always denotes
an element of $S_i^*(x)$. We now prove some properties of $S^*(x)$ and its
subsets.

\begin{property}\label{S^*}
Let $(V,\imp,\prec)$ be a \ght. Let $(V\cup \{x\},\imp')$ be a pseudo-transitive extension of
$\imp$.
\begin{enumerate}
\item\label{s^*} Fix $v \in V \cup \{x\}$ and $* \in \{+,-\}$. If $\rho$ is
    a $*$-sequence such that $\forall i< |\rho|\, (v-'\rho(i))$ then either
    $\forall i < |\rho| \,(\rho(i) \imp' v)$ or $\forall i < |\rho| \,(v
    \imp' \rho(i))$.
\item\label{SnonS} $S^-(x) \prec T(x)$ and $T(x) \prec S^+(x)$.
\item\label{5.5} If $\rho^*$ is a $*$-sequence for $* \in \{+,-\}$,
    $\rho^+(0) \imp' x \leftarrow' \rho^-(0)$ and $e_0, \dots, e_n$ witness
    $\varphi(\rho^-(0),\rho^+(0),f)$ for some $f \mid' x$, then there
    exists $i \le n$ such that $e_i, \dots, e_n$ witness
    $\varphi(\rho^-(k),\rho^+(j),x)$, for each $k$ and $j$. Moreover,
    $\rho^-(k) \imp' x\leftarrow' \rho^+(j)$. The same statement holds with
$\psi$ in place of $\varphi$.
\end{enumerate}
\end{property}
\begin{proof}
(1) is obvious by pseudo-transitivity of $\imp'$.\smallskip

To prove (2) we fix $t \in T(x)$ and prove by induction on $i$ that $t \prec
S^+_i(x)$ for every $i$. For the base of the induction, $t \prec S_0^+(x)$
follows from $S_0^+(x) \subseteq N(t)$ (Remark \ref{remS*}), $t \in N^-(x)$
and $S^+_0(x) \cap N^-(x) = \emptyset$. For the induction step let $s^+_{i+1}
\in S^+_{i+1}(x)$ and choose $s^+_i \in S^+_i(x)$ such that $s^+_{i+1} \mid
s^+_i$. By induction hypothesis $t \prec s^+_i$ and hence, since
$s^+_{i+1}-t$ (again by Remark \ref{remS*}), we have $t \prec s^+_{i+1}$.
This shows $T(x) \prec S^+(x)$. Analogously we prove $S^-(x) \prec
T(x)$.\smallskip

To prove (3) fix $\rho^+$, $\rho^-$, $f$, $e_0, \dots, e_n$ satisfying the
hypothesis. Let $m^*$ be the length of $\rho^*$ for $* \in \{+,-\}$. We write
$s_k^*$ in place of $\rho^*(k)$. Since $s_0^* \imp' x$ and $S(x) \subseteq
N(x)$, (\ref{s^*}) implies that $s_k^* \imp' x$ for each $k<m^*$.

Applying Property \ref{ei-x} to $(V\cup \{x\},\imp')$ we obtain that there
exists $i \le n$ such that  $e_i, \dots, e_n$ witness
$\varphi(s_0^-,s_0^+,x)$. For the sake of convenience assume $i=0$, so that
$e_0, \dots, e_n$ witness $\varphi(s_0^-,s_0^+,x)$ as well.

Fix $* \in \{+,-\}$. We claim that $\forall k<m^* \,\forall i\le n\, (e_i -
s_k^*)$. The proof is by double induction\footnote{since we are dealing with
formulas where all quantifiers are bounded, this can be done within $\rca$.}.
Suppose $\forall i\le n\, (e_i - s_\ell ^*)$ for each $\ell < k$. We prove by
induction on $i$ that $\forall i\le n \,(e_i - s_k^*)$. For the base case,
$e_0 - s_k^*$ by \pt x\ since $s_k^* \imp' x$ and $x \imp' e_0$ by
($\varphi_1$) of $\varphi(s_0^-,s_0^+,x)$. For the induction step suppose
$e_i - s_k^*$. If $s_k^* \imp e_i$, then $s_0^* \imp e_i$ by (\ref{s^*})
(that applies because $\forall \ell < k\, (e_i - s^*_\ell)$). Then $e_i \imp
e_{i+1} $ by $\varphi(s_0^-,s_0^+,x)$. Hence $e_{i+1} - s_k^*$ by \pt{e_i}.
If $e_i\imp s_k^*$, the argument is analogous.

Let $k<m^-$ and $j<m^+$. We check that the three conditions of
$\varphi(s_k^-,s_j^+,x)$ are satisfied. Condition ($\varphi_1$) holds
trivially since it coincides with ($\varphi_1$) of $\varphi(s_0^-,s_0^+,x)$.
To check that ($\varphi_2$) holds suppose $s_k^- \imp e_i$. Then $s_0^- \imp
e_i$ by (\ref{s^*}) and thus $s_0^+ \imp e_i \imp e_{i+1}$ by ($\varphi_2$)
of $\varphi(s_0^-,s_0^+,x)$. By (\ref{s^*}) again it holds that $s_j^+ \imp
e_i$ holds as well. An analogous argument shows that if $e_i \imp s_k^-$,
then $e_{i+1} \imp e_i \imp s_j^+$. These establish that ($\varphi_2$) of
$\varphi(s_k^-,s_j^+,x)$ holds. Condition ($\varphi_3$) is checked in a
similar way.
\end{proof}

Notice that Property \ref{S^*}.\ref{SnonS} implies $S^-(x) \prec S^+(x)$
whenever $T(x) \neq \emptyset$. To see that this holds in general we need to
strengthen the hypothesis on the reorientation of $(V,\imp)$.

\begin{lemma}\label{CompS+-}
Let $(V,\imp,\prec)$ be a \ght\ such that $\Psi$, $\Phi$ and $\Theta$ are satisfied. Let
$(V\cup \{x\},\imp')$ be a pseudo-transitive extension of $\imp$. Then
$S^-(x) \prec S^+(x)$ and hence $S^-(x) \cap S^+(x) = \emptyset$.
\end{lemma}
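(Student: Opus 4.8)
The plan is to run the three formulas through the machinery developed so far. Since $\Phi$ and $\Psi$ hold, Lemma \ref{characterization} gives $N(x)=N^+(x)\cup N^-(x)$ for every pseudo-transitive extension $(V\cup\{x\},\imp')$; hence by Remark \ref{remS*} each $S_i^*(x)$ with $i\ge1$ and $T(x)$ lie inside $N^+(x)\cap N^-(x)$, and $S(x)\subseteq N(x)$. Since $\Theta$ holds, Lemma \ref{char2Frecce} gives $N^-(x)\setminus N^+(x)\prec N^+(x)\setminus N^-(x)$, that is, $S_0^-(x)\prec S_0^+(x)$; this will be the base of an induction. (The subcase $T(x)\ne\emptyset$ could also be settled at once, taking $t\in T(x)$ and applying Property \ref{S^*}.\ref{SnonS} to get $S^-(x)\prec t\prec S^+(x)$, but the argument below does not need this.)

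Fix $s^-\in S_k^-(x)$ and $s^+\in S_j^+(x)$, and by Remark \ref{remS*} pick a $-$-sequence $\rho^-$ with $\rho^-(k)=s^-$ and a $+$-sequence $\rho^+$ with $\rho^+(j)=s^+$, so $\rho^-(0)\in S_0^-(x)$ and $\rho^+(0)\in S_0^+(x)$. Every term of $\rho^-$ and $\rho^+$ is in $N(x)$, hence $\imp'$-comparable to $x$; so by Property \ref{S^*}.\ref{s^*} (with $v=x$) all edges from $\rho^-$ to $x$ have a common direction, and likewise for $\rho^+$. The heart of the proof is the claim that $s^- - s^+$ (which forces $s^-\ne s^+$), which I would prove by cases on these two directions. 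If the sequences point oppositely relative to $x$ — say $\rho^-(i)\imp' x$ and $x\imp'\rho^+(i)$ for all $i$, or the mirror image — then $s^-\imp' x\imp' s^+$ (respectively $s^+\imp' x\imp' s^-$), and pseudo-transitivity of $\imp'$ yields $s^- - s^+$.

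The remaining case is that both sequences point into $x$ (the case both point out of $x$ reduces to it by reversing all edges: by Remark \ref{dualityPhi} the reversed triple still satisfies $\Phi$, $\Psi$, $\Theta$, while $S^+$ and $S^-$ are interchanged). So $\rho^-(0)\imp' x$ and $\rho^+(0)\imp' x$; also $\rho^-(0)\prec\rho^+(0)$ by Lemma \ref{char2Frecce}, so $\rho^-(0)$ and $\rho^+(0)$ are $\imp$-comparable. From $\rho^-(0)\notin N^+(x)$ pick $f^-\succ\rho^-(0)$ with $f^-\mid' x$, and from $\rho^+(0)\notin N^-(x)$ pick $f^+\prec\rho^+(0)$ with $f^+\mid' x$; since $\rho^-(0),\rho^+(0)$ point into $x$ and $f^-,f^+\mid' x$, pseudo-transitivity forces $\rho^-(0)\imp f^-$ and $\rho^+(0)\imp f^+$. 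Now split on the $\imp$-direction between $\rho^-(0)$ and $\rho^+(0)$: if $\rho^-(0)\imp\rho^+(0)$ then $\rho^-(0)\imp\rho^+(0)\imp f^+$ gives $\rho^-(0)-f^+$, and since $\rho^-(0)\in N^-(x)$ excludes $f^+\prec\rho^-(0)$ we get $\rho^-(0)\prec f^+\prec\rho^+(0)$; symmetrically, $\rho^+(0)\imp\rho^-(0)$ yields $\rho^-(0)\prec f^-\prec\rho^+(0)$. Either way there is $f\mid' x$ with $\rho^-(0)\prec f\prec\rho^+(0)$, and pseudo-transitivity (again via the edges into $x$) forces $\rho^-(0)\imp f\leftarrow\rho^+(0)$. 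Then $\Phi$ gives $\varphi(\rho^-(0),\rho^+(0),f)$, and Property \ref{S^*}.\ref{5.5} supplies $i\le n$ such that $e_i,\dots,e_n$ witness $\varphi(s^-,s^+,x)$; clause ($\varphi_3$) then gives $e_n\in V$ with $s^-\imp e_n\imp s^+$ or $s^+\imp e_n\imp s^-$, hence $s^- - s^+$ by pseudo-transitivity.

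Granting the claim, $S^-(x)\prec S^+(x)$ follows by induction on $k+j$. The base case $k=j=0$ is $S_0^-(x)\prec S_0^+(x)$. For the step, assume $k\ge1$ (swapping the roles of the two sequences otherwise): then $s':=\rho^-(k-1)\in S_{k-1}^-(x)$ satisfies $s'\mid s^-$ and, by the induction hypothesis, $s'\prec s^+$; since $s^- - s^+$ by the claim, $s^+\prec s^-$ would give $s'\prec s^+\prec s^-$, contradicting $s'\mid s^-$, so $s^-\prec s^+$. Finally $S^-(x)\cap S^+(x)=\emptyset$ since $\prec$ is irreflexive. Every quantifier here ranges over $V\cup\{x\}$ and is bounded, so the argument is formalizable in $\rca$. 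I expect the main obstacle to be the claim, and inside it the ``both into $x$'' case — in particular, pinning down a vertex $f\notin N(x)$ strictly $\prec$-between $\rho^-(0)$ and $\rho^+(0)$ on which $\Phi$ and then Property \ref{S^*}.\ref{5.5} can be brought to bear; the opposite-direction cases and the concluding induction are routine.
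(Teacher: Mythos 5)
Your proposal is correct and follows essentially the same route as the paper's proof: the same key claim that $s^- - s^+$, the same case split on the directions of the edges to $x$, the same construction of a vertex $f\mid' x$ with $\rho^-(0)\prec f\prec\rho^+(0)$ and $\rho^-(0)\imp f\leftarrow\rho^+(0)$, followed by $\Phi$ and Property \ref{S^*}.\ref{5.5} to get $\varphi(s^-,s^+,x)$ and hence comparability. The only (harmless) deviations are cosmetic: you dispatch the ``both edges out of $x$'' case by duality rather than by the symmetric argument with $\Psi$, and you organize the final upgrade from comparability to $\prec$ as an induction on $k+j$ using the incomparability links in the sequences, where the paper fixes $s^-$ and inducts on the index of $s^+$ using $s^-\in N^-(x)$.
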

\begin{proof}
Let $s^-\in S^-(x)$. We first claim that $s^- - s^+$ for every $s^+ \in
S^+(x)$, which is obviously necessary for $S^-(x) \prec S^+(x)$. Since $s^-,
s^+ \in N(x)$, there are four possibilities.

If $s^+ \imp' x\imp' s^-$ or $s^- \imp' x\imp' s^+$, then by \pt x we have
$s^- - s^+$.

Otherwise, $s^- \imp' x \leftarrow' s^+$ or $s^+ \leftarrow' x \imp' s^-$.
Suppose the former holds. For $* \in \{+,-\}$ choose a $*$-sequence $\langle
s_0^*, \dots, s_{m^*}^* \rangle$ such that $s_{m^*}^*= s^*$.
Recall that, by definition of $*$-sequence, $s_i^* \in S_i^*(x)$ for each $i
\le m^*$ and $s_i^* \mid s_{i+1}^*$ for each $i< m^*$.

Since $s^- \imp' x \leftarrow' s^+$, Property
\ref{S^*}{\color{red}.}\ref{s^*} implies that $s_0^+ \imp' x \leftarrow'
s_0^-$. Since $s^+_0 \notin N^-(x)$, there exists $f$ such that $f \prec
s^+_0$ and $f\mid' x$. Analogously, there exists $e$ such that $s^-_0 \prec
e$ and $e\mid' x$. Given that $f\mid' x\mid' e$ and $s_0^+ \imp' x
\leftarrow' s_0^-$, then $ s^+_0 \imp f$ and $s_0^- \imp e$. Moreover, since
$s_0^- \prec s_0^+$ by Theorem \ref{N2<N1}, it holds  $s_0^+ \imp s_0^-$ or
$s_0^- \imp s_0^+$. Suppose the latter, the other case being similar using
$e$ in place of $f$. We have $s_0^- - f$ by \pt{s_0^+}, and thus $s_0^- \imp
f$ since $s_0^- \imp' x$ and $x \mid' f$. Since $s^-_0 \in N^-(x)$, then
$s_0^- \prec f$. Summarizing, we have just shown that $s_0^- \prec f \prec
s_0^+$  and $s_0^+ \imp f \leftarrow s_0^-$. Since we are assuming $\Phi$
holds, there are $e_0, \dots, e_n$ witnessing $\varphi(s_0^-,s_0^+,f)$.
Applying Property \ref{S^*}{\color{red}.}\ref{5.5} we obtain that there
exists an $i \le n$ such that  $e_i, \dots, e_n$ witness
$\varphi(s_k^-,s_j^+,x)$ for each $k \le m^+$ and $j \le m^-$. In particular
$\varphi(s^-,s^+,x)$ is satisfied and so either $s^- \imp e_n\imp s^+$ or
$s^+ \imp e_n \imp s^-$ holds. In both cases, by \pt{e_n}, $s^+ - s^-$ as we
wanted to show.

If instead $s^- \leftarrow' x \imp' s^+$ the argument is similar, reversing
all arrows and using $\Psi$.

We have thus established our claim that $s^- - s^+$ for every $s^+ \in
S^+(x)$. Now we prove by induction on $i$ that $s^- \prec S^+_i(x)$ for every
$i$. For the base of the induction, $s^- \prec s^+_0$ for every $s^+_0 \in
S_0^+(x)$ because $s^- - s^+_0$, $s^- \in N^-(x)$ and $s^+_0(x) \notin
N^-(x)$. For the induction step let $s^+_{i+1} \in S^+_{i+1}(x)$ and choose
$s^+_i \in S^+_i(x)$ such that $s^+_{i+1} \mid s^+_i$. By induction
hypothesis $s^- \prec s^+_i$ and hence, since $s^+_{i+1}-s^-$, we have $s^-
\prec s^+_{i+1}$.
\end{proof}

These relations between subsets of $N(x)$ explain the choices for the
reorientation of $V \cup \{x\}$ made in the following definition.

%
%
%

\begin{definition}\label{def:<'}
Let $(V,\imp,\prec)$ be a \ght\ satisfying $\Phi$,
$\Psi$ and $\Theta$ and such that $V\subseteq \nat$.
Let $(V\cup \{x\},\imp')$ be a pseudo-transitive
extension of $\imp$.

We define $\prec'$, the \emph{smart extension of $\prec$ to $(V\cup
\{x\},\imp')$}, as the binary relation that extends $\prec$ to $V \cup \{x\}$
by establishing the relationship between $x$ and each $v \in V$ recursively
as follows:
\begin{enumerate}[\qquad(1)]
  \item if $v \notin N(x)$ let $v \nprec' x$ and $x \nsucc' v$;
  \item if $v \in S(x)$ then
      \begin{enumerate}[(a)]
      \item if $v \in S^-(x)$ let $v \prec' x$,
      \item if $v \in S^+(x)$ let $x \prec' v$;
    \end{enumerate}
  \item if $v \in T(x)$ then
    \begin{enumerate}[(a)]
      \item if there exists $u< v$ such that $v \prec u \prec' x$ let $v
          \prec' x$,
      \item if there exists $u< v$ such that $x \prec' u \prec v$ let $x
          \prec' v$,
      \item otherwise let $v \prec' x$ if $v \imp' x$ and $x \prec' v$ if
          $x \imp ' v$.
    \end{enumerate}
\end{enumerate}
\end{definition}

Notice that $\prec'$ depends on the order $<$ on $\nat$, is always a
reorientation of $(V \cup \{x\}, \imp')$, and extends $\prec$.

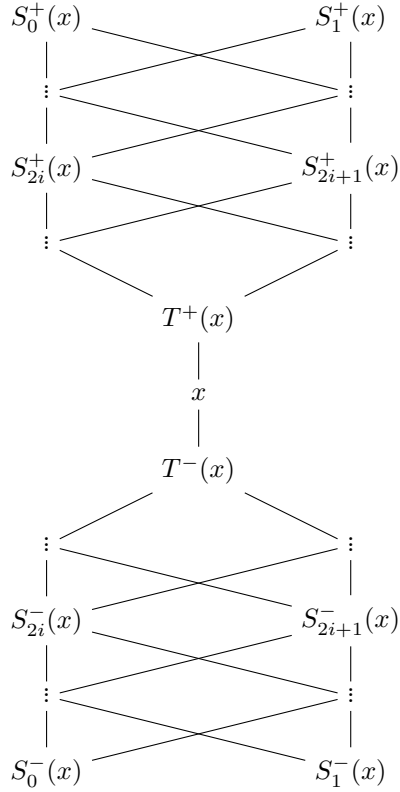
\begin{figure}
\begin{tikzpicture}
\node (1) at (0,0){$S_0^-(x)$};
\node (2) at (0,1) {\rvdots};
\node (3) at (0,2){$S_{2i}^-(x)$};
\node (4) at (0,3) {\rvdots};
\node (5) at (2,4){$T^-(x)$};
\node (6) at (2,5){$x$};
\node (7) at (2,6){$T^+(x)$};
\node (8) at (0,7) {\rvdots};
\node (9) at (0,8){$S_{2i}^+(x)$};
\node (10) at (0,9) {\rvdots};
\node (11) at (0,10){$S_0^+(x)$};

\node (a) at (4,0){$S_1^-(x)$};
\node (b) at (4,1) {\rvdots};
\node (c) at (4,2){$S_{2i+1}^-(x)$};
\node (d) at (4,3) {\rvdots};
\node (e) at (4,7) {\rvdots};
\node (f) at (4,8){$S_{2i+1}^+(x)$};
\node (g) at (4,9) {\rvdots};
\node (h) at (4,10){$S_1^+(x)$};

\draw  (1) --(2);
\draw  (2) --(3);
\draw  (3) --(4);
\draw  (4) --(5);
\draw  (5) --(6);
\draw  (6) --(7);
\draw  (7) --(8);
\draw  (8) --(9);
\draw  (9) --(10);
\draw  (10) --(11);

\draw  (a) --(b);
\draw  (b) --(c);
\draw  (c) --(d);
\draw  (d) --(5);
\draw  (7) --(e);
\draw  (e) --(f);
\draw  (f) --(g);
\draw  (g) --(h);

\draw  (1) --(b);
\draw  (2) --(c);
\draw  (3) --(d);
\draw  (8) --(f);
\draw  (9) --(g);
\draw  (10) --(h);

\draw  (a) --(2);
\draw  (b) --(3);
\draw  (c) --(4);
\draw  (e) --(9);
\draw  (f) --(10);
\draw  (g) --(11);
\end{tikzpicture}
\caption{\small A smart extension.}\label{figure<'}
\end{figure}
For a visual understanding of $\prec'$ see Figure \ref{figure<'}. Here we
denote by $T^-(x)$, resp.\ $T^+(x)$, the subset of $T(x)$ consisting of the
vertices which are below, resp.\ above, $x$. Moreover the picture shows that
$S^-_i(x) \prec S^-_{i+2}(x)$ and $S^+_{i+2}(x) \prec S^+_i(x)$: we leave to
the reader to prove these relations, since we do not need them. The picture
may suggest that $(N(x),\prec)$ has width two, but this is not the case
because there may be nontrivial antichains within some $S_i^*(x)$ and/or
$T^*(x)$.

The hypothesis that $(V, \imp,\prec)$  satisfies $\Phi$, $\Psi$ and $\Theta$
makes sure that Conditions (2a) and (2b) of Definition \ref{def:<'} are
mutually exclusive, by Lemma \ref{CompS+-}. Some of the clauses of Definition
\ref{def:<'} are necessary for $\prec'$ to be a transitive reorientation of
$\imp'$. Condition (1) is obviously necessary for $\prec'$ to be a
reorientation. The choice $S^-(x) \prec x$ made by Condition (2a) is
explained by an inductive argument: $S^-_0(x) \prec' x$ is required because
$S^-_0(x) \cap N^+(x) = \emptyset$, and if $S^-_i(x) \prec x$ then the
members of $S^-_{i+1}(x)$ (each incomparable with some element of $S^-_i(x)$)
cannot lie above $x$. The same argument applies to $S^+(x)$ and justifies
Condition (2b). Conditions (3a) and (3b) are clearly necessary for
transitivity. Condition (3c) is applied when the relationship between $x$ and
$v_i$ is not decided by the previous conditions and in this case $\prec'$
simply preserves the direction of $\imp'$. \smallskip

From a complexity point of view, defining the sets $S^+(x)$ and $S^-(x)$
requires more resources than setting the relation between $x$ and $v \in V$
according to Definition \ref{def:<'}. The sets $S_0^+(x)$ and $S_0^-(x)$ are
computed in at most $|V^2|$ steps, since one needs to consider each $v \in
N(x)$ and for each such $v$ to go through each $u \in N(v) \setminus N(x)$.
The remaining members of $S^+(x)$ and $S^-(x)$ can be found by a depth-first
search algorithm applied to the non-adjacency graph $(V\cup \{x\},E')$ (the
complexity of depth-first search algorithm is $O(|V| + |E|)$, see
\cite[Section 22.3]{Cormen}). To this end notice that for each $s_0 \in
S_0^+(x)$ there exists a sequence\footnote{Such sequences may not be
$+$-sequences, because it may be the case that $v_i \in S_j^+(x)$, for some
$j < i$ due to the incomparability chain caused by some other element of
$S_0^+(x)$.} $v_0=s_0, v_1, \dots, v_n$, for some $n < |V|$, such that $v_i
\E' v_{i+1}$ and $v_i \in N(x)$, for each $i \le n$. Then each $v_i \in
S^+(x)$. The same obviously applies also to $S^-(x)$.

Therefore an upper bound for the complexity of the smart extension is
$O(|V|^2)$.


\begin{definition}
Let $(V,\imp)$ be a pseudo-transitive ograph with $V$ an initial interval of
$\nat$. The relation $\prec$ is the \emph{smart reorientation} of $\imp$ if
it at each step $s$ the reorientation ${\prec_{s+1}} = {\prec\upharpoonright
\{0, \dots, s\}}$ is obtained as the smart extension of $\prec_s$.
\end{definition}

\begin{remark}\label{duality}
Notice that the smart reorientation of
$(V,\leftarrow)$ is the reversal of the smart reorientation of $(V,\imp)$.
\end{remark}

Theorem \ref{MainTheorem} proves that the smart reorientation algorithm is
correct. To obtain this result we prove some properties of smart
reorientations. In particular we introduce the notion of \lq lazy
reorientation\rq\ in Definition \ref{DefLazy}. The intuitive idea behind it
is the following one: an edge $a \imp b$ is reversed only when this is really
needed to obtain a transitive reorientation, because $a \imp b \imp c \imp a$
and the edges $b \imp c$ and $c \imp a$ are not reversed.

\begin{property}\label{Minimoj}
Let $(V,\imp,\prec)$ be a \ght\ with $V \subseteq \nat$. Let
 $(V\cup \{x\},\imp')$ be a pseudo-transitive extension of $\imp$. Let $\prec'$ be the
smart extension of $\prec$.

If $a \prec' x$ because we applied condition (3a) with witness $b$ then $b
\in T(x)$. Moreover we can choose $b$ so that $b \imp' x$.

Similarly, if $x \prec' a$ because we applied condition (3b) with witness $b$
then $b \in T(x)$ and we can assume $x \imp' b$.
\end{property}
\begin{proof}
Let $a\in T(x)$ and $b$ with $b < a$ be such that $a \prec b \prec' x$. Since
$b \prec' x$ then $b \in S^-(x) \cup T(x)$. But $b \notin S^-(x)$ by Property
\ref{S^*}{\color{red}.}\ref{SnonS} and hence $b \in T(x)$. Let $b$ be least
(as a natural number) such that $a \prec b \prec' x$. If $x \imp' b$, then we
used condition (3a) when dealing with $b$ and so there exists $c< b$ such
that $b \prec c \prec' x$, contrary to the minimality of $b$. Hence, $b \imp'
x$.

The proof of the second statement is analogous.
\end{proof}

\begin{definition} \label{DefLazy}
Let $(V,\imp)$ be a pseudo-transitive ograph. The reorientation $\prec$ of
$\imp$ is a \emph{lazy reorientation} if it satisfies the following property:
for each $a,b \in V$ such that $a \imp b$ and $b \prec a$ there exists $c\in
V$ such that $b \imp c \imp a$ (i.e.\ $abc$ is a non transitive triangle), $b
\prec c \prec a$, and $c < \min (a,b)$.

$(V,\imp,\prec)$ is a \emph{\lt} if $(V,\imp)$ is a pseudo-transitive ograph
and the reorientation $\prec$ of $\imp$ is a lazy reorientation.
\end{definition}

Notice that a \lt\ is not necessarily a \ght, because we are not requiring
$\prec$ to be transitive.

\begin{remark}\label{dualityLazy}
$(V,\imp,\prec)$ is a \lt\ if only if  $(V,\leftarrow,\succ)$ is a \lt, where
$(V,\leftarrow)$ is the reverse ograph of $(V,\imp)$.
\end{remark}

\begin{property} \label{TriangNonTrans}
Let $(V,\imp)$ be a pseudo-transitive ograph with $V \subseteq \nat$ and let
$\prec$ be the smart reorientation of $(V, \imp)$. Assume that $\prec$ is
transitive, so that $(V, \imp,\prec)$ is a \ght. Then $\prec$ is lazy, i.e.\ $(V, \imp,\prec)$ is a \lt.
\end{property}
\begin{proof}
The proof of the laziness condition for every $a,b \in V$ is by induction on
the lexicographic order of the pair of natural numbers $(\max(a,b),
\min(a,b))$. Suppose $a \imp b$ and $b \prec a$ and assume that for each $a'$
and $b'$ such that $a' \imp b'$, $b' \prec a'$ and either $\max(a',b') < \max
(a,b)$ or $\max(a',b') = \max (a,b)$ and $\min(a',b') < \min (a,b)$ there
exists $c'$ such that $b' \imp c' \imp a'$, $b' \prec c' \prec a'$, and $c' <
\min (a',b')$.

By remark \ref{duality} we can assume without loss of generality that $a <
b$. According to Definition \ref{def:<'} either $a \in S^+(b)$ or $a \in
T(b)$.\smallskip

If $a \in S^+(b)$ let $i$ be such that $a \in S_i^+(b)$. We first show that
$i>0$ is impossible. If $a \in S^+_i(b)$ with $i>0$ let $\rho$ be a
$+$-sequence of length at least $2$ such that $\rho(i) = a$. By Property
\ref{S^*}{\color{red}.}\ref{s^*} we have $\rho(1) \imp b$. Since $\rho(1) \in
S^+_1(b)$, there exists $d < b$ such that $d \in S^+_0(b)$ and $d \mid
\rho(1)$. Then $b \prec d$ and $d \imp b$. Since $d \in S_0^+(b)$ there
exists $f < b$, $f \mid b$, $f \prec d$. Thus $d \imp f$. As $\max (d,f) < b
= \max (a,b)$ we can apply the induction hypothesis and there exists $c$ such
that $f \imp c \imp d$ and $f \prec c \prec d$. We have $c-b$ by \pt d, and
hence $b\imp c$ because $b \mid f$. But now $\rho(1)-c$ by \pt b, and hence
$c \imp \rho(1)$ since $\rho(1) \mid d$. Using again \pt c we have
$\rho(1)-f$, a contradiction with $\rho(1) \in S^+_1(b) \subseteq N^-(b) \cap
N^+(b)$ as $f \notin N(b)$.

Thus $i=0$ and $a \notin N^-(b)$. In particular there exists $f < b$, $f\mid
b$, $f \prec a$ and so $a \imp f$. As $\max (a,f) < \max (a,b)$ we can apply
the induction hypothesis and there exists $c < \min(a,f)$ such that $f \imp c
\imp a$ and $f \prec c \prec a$. We have $c-b$ by \pt a, and hence $b\imp c$
because $b \mid f$. Hence $b \imp c \imp a$, $b \prec c \prec a$ (because
$\prec$ is transitive and $b \mid f$), and $c < \min (a,b)$, as
required.\smallskip

If $a \in T(b)$ we applied condition (3b) of Definition \ref{def:<'} to set
$b \prec a$. Hence, by Property \ref{Minimoj} there exists $c$ such that $c<
a$, $b \prec c \prec a$ and $b \imp c$. We can assume that $c$ is least (as a
natural number) with these properties. If $c \imp a$ we have our conclusion.
We now rule out the possibility that $a \imp c$. If this was the case, by
induction hypothesis (as $\max(a,c) < \max(a,b)$) there exists $d < \min
(a,c)$ such that $c \imp d \imp a$ and $c \prec d \prec a$. By transitivity
of $\prec$ we have $b \prec d \prec a$ and $b-d$. If $b \imp d$ then $d < c$
violates the minimality of $c$. If $d \imp b$ then by induction hypothesis
(as $\max(d,b) = \max (a,b)$ and $\min(d,b) < \min(a,b)$) there exists $e <
d$ such that $b \prec e \prec d$ and $b \imp e \imp d$. But then $e < c$, $b
\prec e \prec a$ (by transitivity of $\prec$) and $b \imp e$ contradict the
minimality of $c$.
\end{proof}

\begin{lemma} \label{PsiPhiThetaSigma}
Let $(V,\imp,\prec)$ be a \ght\ which is also a \lt\ and such that $V \subseteq \nat$. Then $\Phi$,
$\Psi$ and $\Theta$ are satisfied.
\end{lemma}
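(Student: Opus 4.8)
The plan is to prove $\Phi$ and $\Theta$ directly and to obtain $\Psi$ from $\Phi$ by duality. By Remarks~\ref{dualityPhi} and~\ref{dualityLazy}, $(V,\imp,\prec)$ satisfies $\Psi$ precisely when $(V,\leftarrow,\succ)$ satisfies $\Phi$, and $(V,\leftarrow,\succ)$ is again a \ght\ which is also a \lt\ with vertex set contained in $\nat$; so it is enough to show that every such triple satisfies $\Phi$ and $\Theta$.

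For $\Theta$, I would fix $a,b,c,d$ satisfying its antecedent, so that by Observation~\ref{c mid b} we also have $a\mid d$. Since $b\imp d$ but $d\prec b$, laziness of $\prec$ supplies a vertex $f$ with $d\imp f\imp b$ (the extra conclusions $d\prec f\prec b$ and $f<\min(b,d)$ are not needed). Then $f\mid a$: if $a\imp f$ then $a\imp f\imp b$ forces $a-b$ by pseudo-transitivity, contradicting $a\mid b$; if $f\imp a$ then $d\imp f\imp a$ forces $d-a$, contradicting $a\mid d$. Hence the single vertex $e_0:=f$ witnesses $\theta(a,b,c,d)$ with $n=0$: clause $(\theta_1)$ is $f\imp b$, clause $(\theta_2)$ is vacuous, clause $(\theta_3)$ is $d\imp f$, and clause $(\theta_4)$ is $f\mid a$. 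So $\Theta$ holds.

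For $\Phi$, I would fix $a,b,c$ with $a\imp c\leftarrow b$ and $a\prec c\prec b$ and build a finite sequence $e_0,\dots,e_n$ witnessing $\varphi(a,b,c)$ by iterating laziness, always taking the least available witness so that the construction is canonical (and stays inside $\rca$). The edge $b\imp c$ is reversed by $\prec$ since $c\prec b$, so laziness gives $e_0$ with $c\imp e_0\imp b$, $c\prec e_0\prec b$, $e_0<\min(b,c)$; this is clause $(\varphi_1)$. From $a\imp c\imp e_0$ pseudo-transitivity gives $a-e_0$: if $a\imp e_0$ then $a\imp e_0\imp b$ and I stop with $n=0$, since $(\varphi_3)$ holds and $(\varphi_2)$ is vacuous; otherwise $e_0\imp a$ and I continue. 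The continuation alternates in parity: after a ``continue'', an even-indexed $e_i$ satisfies $e_i\imp a$ and $e_i\imp b$, an odd-indexed $e_i$ satisfies $a\imp e_i$ and $b\imp e_i$, consecutive terms being comparable via $e_i\imp e_{i-1}$ (for $i$ even) or $e_{i-1}\imp e_i$ (for $i$ odd), with $c\imp e_0$ at the start. In each case $\prec$ reverses the edge $e_i\imp a$ (since $a\prec e_i$) when $i$ is even and the edge $b\imp e_i$ (since $e_i\prec b$) when $i$ is odd, where these $\prec$-comparisons come from transitivity of $\prec$ applied to chains such as $a\prec e_{i-1}\prec e_i$ resp.\ $e_i\prec e_{i-1}\prec b$. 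So laziness applies and produces $e_{i+1}$; this choice simultaneously verifies clause $(\varphi_2)$ at index $i$ (its second disjunct when $i$ is even, its first disjunct when $i$ is odd) and, by pseudo-transitivity along the edge between $e_{i+1}$ and $e_i$, yields the comparability of $e_{i+1}$ with the relevant one of $a,b$; if that comparability closes up into $a\imp e_{i+1}\imp b$ then $(\varphi_3)$ is met and I stop, otherwise I continue.

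The decisive point, and the step I expect to be the main obstacle, is termination. The bound $c<\min(a,b)$ in the definition of laziness always involves, at step $i\ge 1$, the previous vertex $e_{i-1}$ as one of the two endpoints, so the witnesses satisfy $e_0>e_1>e_2>\cdots$ for as long as they are defined; a strictly decreasing sequence of natural numbers cannot be infinite, so the construction halts after finitely many steps with $e_0,\dots,e_n$ satisfying $(\varphi_1)$, $(\varphi_2)$ for all $i<n$, and $(\varphi_3)$. Thus $\varphi(a,b,c)$ holds, which proves $\Phi$. Besides this descending-chain observation, the only delicate part is the accompanying bookkeeping: keeping straight which disjunct of $(\varphi_2)$ is in force at each parity, and checking that the $\prec$-comparisons needed to re-invoke laziness are genuinely available, which is exactly where the \ght\ hypothesis (transitivity of $\prec$) enters.
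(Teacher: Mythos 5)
Your proposal is correct and follows essentially the same route as the paper: a single laziness application (plus the $a\mid d$ observation) gives $\Theta$, iterated laziness with the alternating parity bookkeeping builds the witnesses for $\varphi(a,b,c)$, termination comes from the witnesses forming a strictly $<$-decreasing sequence of naturals, and $\Psi$ follows from $\Phi$ by the duality of Remarks \ref{dualityPhi} and \ref{dualityLazy}. The only (harmless) deviation is that you get comparability of each new witness with $a$ or $b$ via pseudo-transitivity through the previous $e_i$, whereas the paper uses transitivity of $\prec$ together with the fact that $\prec$ is a reorientation; both are fine.
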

\begin{proof}
Thanks to laziness checking that $\Theta$ holds is straightforward. In fact,
suppose $a \imp c$, $b \imp d$, $a \prec c$ and $d \prec b$ for some $a,b,c,d
\in V$. Since $b \imp d$ but $d \prec b$, there exists an $e_0$ such that $d
\imp e_0 \imp b$. It is immediate to check that $e_0$ witnesses
$\theta(a,b,c,d)$\footnote{\, Notice that laziness implies a strong form of
$\Theta$, in fact the sequence witnessing the formulas have always length
one.}.\smallskip

To check that $\Phi$ holds let $a,b,c \in V$ be such that $a \imp c
\leftarrow b$ and $a \prec c \prec b $. Since $b \imp c$, $c \prec b$ and
$\prec$ is lazy, there exists $e_0\in V$ such that $c \imp e_0 \imp b$,
$c\prec e_0 \prec b$ and $e_0 < \min (b,c)$. By transitivity of $\prec$ it
follows that $a \prec e_0$ and thus $a - e_0$, since $\prec$ is a
reorientation. If $a\imp e_0$, it is immediate to check that $e_0$ witnesses
$\varphi(a,b,c)$.

Otherwise $e_0 \imp a$ and, since $a \prec e_0$, by laziness there exists
$e_1\in V$ such that $a \imp e_1 \imp e_0$, $a \prec e_1 \prec e_0$ and $e_1
< \min (e_0,a)$. Notice that even if $a \imp c \imp e_0$ and  $a \prec c
\prec e_0$ it must be $c \ne e_1$ because $e_1 < e_0 < c$ by construction. By
transitivity we get that $e_1 \prec b$ and so either $e_1 \imp b$ or $b \imp
e_1$. If the former holds then $e_0, e_1$ witness $\varphi(a,b,c)$.

We have now to analyze the case when $b \imp e_1$. Since $e_1 \prec b$ by
laziness there exists $e_2$ such that $e_1 \imp e_2 \imp b$, $e_1 \prec e_2
\prec b$ and $e_2 < \min (b,e_1)$. By transitivity it holds that $a \prec
e_2$. If $a \imp e_2$, it is easy to check that $e_0, e_1, e_2$ witness
$\varphi(a,b,c)$. Otherwise $e_2 \imp a$ and we can apply laziness again to
obtain $e_3$.

This procedure provides a $<$-decreasing sequence $(e_i)$ such that $a \imp
e_{i+1} \imp e_i$ when $i$ is even, and $e_i \imp e_{i+1} \imp b$ when $i$ is
odd. The sequence stops with $e_n$ such that $a \imp e_n \imp b$. We claim
that $e_0, \dots, e_n$ witness $\varphi(a,b,c)$. In fact ($\varphi_1$) is guaranteed by
$c \imp e_0$. Moreover, for each $i<n$ either $a \imp e_i \leftarrow b$ or $a
\leftarrow e_i \imp b$ by assumption. If the former is the case then $e_i
\imp e_{i+1}$, while if the latter holds $e_{i+1} \imp e_i$ by construction.
These two facts guarantee that ($\varphi_2$) is satisfied as well. The vertex $e_n$
satisfies condition ($\varphi_3$) by construction.\smallskip

It is now easy to check that $\Psi$ is satisfied as well applying the duality
principle of Remark \ref{dualityPhi}. Consider the graph $(V, \leftarrow)$
and the transitive reorientation $\succ$. Remark \ref{dualityLazy} guarantees
that $\succ$ is lazy as well. Hence, $\Phi$ holds by what we have just shown.
Then, by Remark \ref{dualityPhi}, $\Psi$ holds in $(V, \imp)$ and $\prec$.
\end{proof}

\begin{lemma}\label{trans}
Let $(V,\imp,\prec)$ be a \ght\ such that $V \subseteq \nat$. Assume $\Phi$, $\Psi$
and $\Theta$ are satisfied. Let $(V\cup\{x\}, \imp')$ be a pseudo-transitive
extension of  $(V,\imp)$. Then the smart extension $\prec'$ to $\imp'$ is
transitive.
\end{lemma}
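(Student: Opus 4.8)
The plan is to verify transitivity of $\prec'$ directly. Suppose $p \prec' q \prec' r$ with $p, q, r \in V \cup \{x\}$; we must show $p \prec' r$. Since $\prec'$ restricted to $V$ equals $\prec$, which is transitive, and $\prec'$ is asymmetric, only triples in which exactly one of $p, q, r$ is $x$ require attention, giving three cases according to the position of $x$. In each of them the hypotheses $\Phi$, $\Psi$, $\Theta$ enter only through the facts $N(x) = N^+(x) \cup N^-(x)$ (Lemma~\ref{characterization}) and $S^-(x) \prec S^+(x)$ (Lemma~\ref{CompS+-}); combined with $S^-(x) \prec T(x) \prec S^+(x)$ (Property~\ref{S^*}.\ref{SnonS}) and the consequences of $N(x)=N^+(x)\cup N^-(x)$ in Remark~\ref{remS*} (namely $T(x)\subseteq N^+(x)\cap N^-(x)$, and $S(x)\subseteq N(t)$ for $t\in T(x)$), these ``spine'' inequalities dispose of every sub-case except the one in which all entries of the triple other than $x$ lie in $T(x)$. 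For instance, if $p \prec' x \prec' r$ with $p, r \in V$ then $p \in S^-(x) \cup T(x)$ and $r \in S^+(x) \cup T(x)$, and each of the three combinations involving an $S^{\pm}(x)$ yields $p \prec r$ immediately; the cases $x \prec' q \prec' r$ and $p \prec' q \prec' x$ are analogous, using that $q \prec r$ (resp.\ $p \prec q$) forces $r$ (resp.\ $p$) into $S^+(x)\cup T(x)$ (resp.\ $S^-(x)\cup T(x)$).

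Write $T^-(x)$, $T^+(x)$ for the elements of $T(x)$ lying $\prec'$-below, resp.\ $\prec'$-above, $x$; these partition $T(x)$ since every vertex of $N(x)$ is $\prec'$-comparable with $x$. Everything then reduces to two claims: \emph{(B)} if $p, q \in T(x)$, $p \prec q$ and $q \prec' x$, then $p \prec' x$ (equivalently, $x \prec' p$ and $p \prec q$ imply $x \prec' q$); and \emph{(C)} there are no $t \in T^-(x)$, $t' \in T^+(x)$ with $t \mid t'$. Granting these, the leftover $T(x)$ sub-cases close: e.g.\ if $p \prec' x \prec' r$ with $p, r \in T(x)$ then $p \prec r$, because $r \prec p$ would give $r \prec' x$ by (B) and $p \mid r$ is excluded by (C); the analogues with $x$ at an end of the triple are precisely (B) and its contrapositive.

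For (B) I would use strong induction on $q \in \nat$, assuming $x \prec' p$ towards a contradiction. If $q < p$, condition (3a) of Definition~\ref{def:<'} applied to $p$ with witness $q$ (valid since $p \prec q \prec' x$) gives $p \prec' x$, a contradiction; so $p < q$. Then $p < q$, $x \prec' p$, $p \prec q$ make condition (3b) applicable to $q$ with witness $p$, so $q \prec' x$ must have been set by condition (3a): there is $b < q$ with $q \prec b \prec' x$, and by Property~\ref{Minimoj} we may take $b \in T(x)$. Now $p \prec q \prec b$ gives $p \prec b$: if $b < p$, condition (3a) applied to $p$ with witness $b$ gives $p \prec' x$, a contradiction; if $b > p$, then $p \prec b \prec' x$ with $p, b \in T(x)$ and $b < q$, so the induction hypothesis gives $p \prec' x$, again a contradiction.

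For (C) I would do a four-way analysis on which of conditions (3a), (3b), (3c) set $t \prec' x$ and $x \prec' t'$. If both are (3c), then $t \imp' x \imp' t'$, so $t - t'$ by pseudo-transitivity of $\imp'$, contradicting $t \mid t'$. Otherwise at least one of the witnesses of Property~\ref{Minimoj} is available: e.g.\ if $t \prec' x$ is via (3a) with $t \prec b \prec' x$, $b \in T(x)$, $b \imp' x$, and $x \prec' t'$ is via (3c) with $x \imp' t'$, then $b \imp' x \imp' t'$ forces $b - t'$, and $b \prec t'$ gives $t \prec b \prec t'$ hence $t \prec t'$ (contradicting $t \mid t'$) while $t' \prec b$ with $b \prec' x$ and (B) gives $t' \prec' x$ (contradicting $x \prec' t'$). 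The cases (3c)/(3b) and (3a)/(3b) are symmetric, using the other half of Property~\ref{Minimoj} and, in the (3a)/(3b) case, pseudo-transitivity between the two witnesses together with transitivity of $\prec$. I expect the heart of the argument to be exactly claims (B) and (C): the spine reductions are bookkeeping, (B) needs the induction on natural-number labels together with careful tracking via Property~\ref{Minimoj} of which clause of Definition~\ref{def:<'} sets each comparison, and (C) is where pseudo-transitivity of $\imp'$ finally forbids an edge $t \imp' x \imp' t'$ between incomparable vertices of $T(x)$; note that $\Phi$, $\Psi$, $\Theta$ are used only to know $N(x) = N^+(x) \cup N^-(x)$ and $S^-(x) \prec S^+(x)$, so that conditions (2a) and (2b) of Definition~\ref{def:<'} are mutually exclusive.
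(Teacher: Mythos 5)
Your proposal is correct and follows essentially the same route as the paper's proof: the same case split by the position of $x$, the same use of Lemma~\ref{CompS+-}, Property~\ref{S^*}.\ref{SnonS} and Remark~\ref{remS*} for the ``spine'' comparisons, and the same combination of Property~\ref{Minimoj}, pseudo-transitivity through $x$, and the numeric clauses (3a)/(3b) for the residual $T(x)$ cases. The only difference is organizational: you package the $T(x)$ analysis into the two claims (B) and (C), proving (B) by induction on the natural-number label, where the paper argues each of its cases directly (its analogue of (B) is a two-line appeal to clauses (3a)/(3b)), so the underlying ideas coincide.
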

\begin{proof}
To check that $\prec$ is transitive, we have to consider the following cases,
where $a,b \in V$:
\begin{enumerate}
\item $a\prec' x \prec' b$. Obviously $a,b\in N(x)$ and, if $a \in S(x)$
    then $a \in S^-(x)$ while if $b \in S(x)$ then $b\in S^+(x)$. We
    consider four possibilities:
\begin{enumerate}
\item $a \in S^-(x)$, $b\in S^+(x)$: then $a \prec b$ follows from Lemma
    \ref{CompS+-}.
\item $a, b \in T(x)$: if $a \imp' x\imp' b$ or $b\imp' x \imp' a$, then
    $a-b$ by pseudo-transitivity. So we are left to check that $b\nprec
    a$. Suppose  $b \prec a$. Then, according to the definition of
    $\prec'$, if $b< a$, then $x \prec' b$ entails $x \prec' a$, while if
    $a< b$, then $a\prec' x$ entails $b \prec' x$.

Otherwise, $a\imp' x\leftarrow' b$ or $a \leftarrow'  x \imp' b$. Suppose
the latter holds, the former being similar. Since $x\imp' a$, but $a
\prec' x$ by assumption, there is, by Property \ref{Minimoj}, $c< a$ such
that $c\in T(x)$, $c \imp' x$ and $a \prec c \prec' x$. Notice that $c -
b$ by \pt x. We claim that $b \nprec c$. Suppose $b \prec c$. If $c< b$,
then, since $b \prec c \prec' x$, then $b \prec' x$ by definition,
contrary to the assumption. Otherwise, $b<c$; then, since $x \prec' b
\prec c$, then $x \prec' c$, contrary to the assumption. Thus it must be
$c \prec b$ and so $a \prec b$ because $\prec$ is transitive by
hypothesis.
\item $a\in S^-(x)$, $b\in T(x)$: $a \prec b$ follows by Property
    \ref{S^*}{\color{red}.}\ref{SnonS}.
\item $a\in T(x)$, $b\in S^+(x)$: $a \prec b$ follows by Property
    \ref{S^*}{\color{red}.}\ref{SnonS}.
	\end{enumerate}
\item $a \prec b \prec' x$. Since $b \in S^-(x) \cup T(x)$ we have $b \in
    N^-(x)$ and thus $a \in N(x)$. If $a \in S(x)$, Property
    \ref{S^*}{\color{red}.}\ref{SnonS} or Lemma \ref{CompS+-} imply $a \in S^-(x)$, and
    thus $a \prec' x$.

    If instead $a \in T(x)$ then $b \in T(x)$ by Property
    \ref{S^*}{\color{red}.}\ref{SnonS}. If $b<a$ then $a \prec b \prec' x$ implies $a
    \prec' x$. If $a<b$ then $x \prec' a$ would imply $x \prec' b$; hence
    $a\prec' x$ holds also in this case.
\item $x \prec' a \prec b$. The argument is similar to the previous
    case.\qedhere
\end{enumerate}
\end{proof}

The following theorem proves that Definition \ref{def:<'} provides an
algorithm to transitively reorient pseudo-transitive graphs.

\begin{theorem}\label{MainTheorem}
Let $(V,\imp)$ be a pseudo-transitive ograph with $V$ an initial interval of
$\nat$ and let $\prec$ be the smart reorientation of $(V, \imp)$. Then
$\prec$ is transitive.
\end{theorem}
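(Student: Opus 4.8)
The plan is to prove the theorem by induction on the stages $s$ at which vertices are added, showing that at each stage the triple $(V_s, \imp\upharpoonright V_s, \prec_s)$ is a \ght\ satisfying $\Phi$, $\Psi$, and $\Theta$. The base case $s=0$ (or $s=1$) is trivial since a one-vertex ograph is vacuously transitive, lazy, and satisfies all three formulas. For the inductive step, suppose $(V_s, \imp_s, \prec_s)$ is a \ght\ satisfying $\Phi$, $\Psi$, and $\Theta$, and let $(V_{s+1}, \imp_{s+1})$ be the pseudo-transitive extension played by adding $x_s$. The smart reorientation sets $\prec_{s+1}$ to be the smart extension of $\prec_s$ to $\imp_{s+1}$, which is always a reorientation extending $\prec_s$. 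By Lemma \ref{trans}, applied to the \ght\ $(V_s, \imp_s, \prec_s)$ (which satisfies $\Phi$, $\Psi$, $\Theta$ by the inductive hypothesis, and has $V_s$ an initial segment of $\nat$), the smart extension $\prec_{s+1}$ is transitive. Hence $(V_{s+1}, \imp_{s+1}, \prec_{s+1})$ is again a \ght.

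It remains to verify that $(V_{s+1}, \imp_{s+1}, \prec_{s+1})$ still satisfies $\Phi$, $\Psi$, and $\Theta$, so that the induction can continue. Here is where I would invoke the chain of results developed in the previous sections: since $\prec_{s+1}$ is the smart reorientation of the ograph $(V_{s+1}, \imp_{s+1})$ restricted to $V_{s+1}$, and we have just argued it is transitive, Property \ref{TriangNonTrans} tells us that $(V_{s+1}, \imp_{s+1}, \prec_{s+1})$ is a \lt\ (i.e.\ $\prec_{s+1}$ is lazy). Then Lemma \ref{PsiPhiThetaSigma}, applied to this \ght\ which is also a \lt, yields that $\Phi$, $\Psi$, and $\Theta$ are satisfied by $(V_{s+1}, \imp_{s+1}, \prec_{s+1})$. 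This closes the induction, and it follows that $\prec$, being the union of the $\prec_s$, is transitive on all of $V$: any instance $a \prec b \prec c$ of the transitivity hypothesis involves finitely many vertices, all present at some stage $s$, at which $\prec_s$ is already transitive, so $a \prec_s c$ and hence $a \prec c$.

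The main subtlety — and the reason the argument has to route through laziness rather than directly propagating $\Phi$, $\Psi$, $\Theta$ — is that the formulas $\Phi$, $\Psi$, $\Theta$ refer to \emph{all} triples/quadruples of vertices, including $x_s$ itself, and there is no obvious direct way to see that adding $x_s$ (whose edges to old vertices are reoriented by the somewhat delicate recursive clauses of Definition \ref{def:<'}) preserves these formulas. The elegant device is that laziness is a self-contained structural property of the smart reorientation (Property \ref{TriangNonTrans}) that does not presuppose $\Phi$, $\Psi$, $\Theta$ for the enlarged graph, yet is strong enough to re-derive them (Lemma \ref{PsiPhiThetaSigma}). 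So the real content has already been packaged into Property \ref{TriangNonTrans} and Lemmas \ref{PsiPhiThetaSigma} and \ref{trans}; the theorem itself is then essentially the observation that these three results fit together into a clean induction, together with the routine remark that transitivity of an $\omega$-union of transitive relations on initial segments follows from the finiteness of the transitivity condition.
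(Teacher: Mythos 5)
Your proposal is correct and takes essentially the same route as the paper: an induction on stages that combines Property \ref{TriangNonTrans}, Lemma \ref{PsiPhiThetaSigma} and Lemma \ref{trans}, plus the observation that transitivity of $\prec$ follows from transitivity of each $\prec_s$. The only (immaterial) difference is bookkeeping: you carry $\Phi$, $\Psi$, $\Theta$ as part of the induction hypothesis, whereas the paper's hypothesis is just that $\prec_s$ is transitive, with laziness and the three formulas re-derived inside each inductive step.
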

\begin{proof}
For each $s\in \nat$, let $\prec_s$ be the restriction of $\prec$ to $\{0,
\dots, s-1\}$. Notice that $\prec_s$ is the smart reorientation of the
restriction of $\imp$ to $\{0, \dots, s-1\}$. To prove that $\prec$ is
transitive it is enough to check that $\prec_s$ is transitive for each $s$.
We do so by induction on $s$. For the base case there is nothing to check.
Suppose $\prec_s$ is transitive. Then by Property \ref{TriangNonTrans}
$\prec_s$ is lazy. Moreover, by Lemma \ref{PsiPhiThetaSigma}  $\Phi$, $\Psi$
and $\Theta$ are satisfied. Hence, by Lemma \ref{trans} the smart extension
$\prec_{s+1}$ is transitive.
\end{proof}


\end{document}